\documentclass{amsart}
\usepackage[utf8]{inputenc}
\usepackage[T1]{fontenc}

\usepackage[cal=dutchcal]{mathalfa}

\usepackage[english]{babel}

\usepackage{libertine}
\usepackage{graphicx}
\usepackage{floatflt}
\usepackage{blindtext}
\usepackage{enumitem}
\usepackage{amsthm}
\usepackage{subfig}
\usepackage{listings}
\usepackage{listingsutf8}
\usepackage{amsmath}
\usepackage{framed}
\usepackage{minibox}
\usepackage{float}
\usepackage{wrapfig}
\usepackage{longtable}
\usepackage[strict]{changepage}
\usepackage{pgfplots}
\usepackage{nicefrac}
\usepackage{units}
\usepackage{etoolbox,tikz}
\usepackage{bbm}
\usepackage{hyperref}
\usepackage[autostyle]{csquotes}

\title{Optimal Synthesis on the Special Linear Group}

 \author[Lau]{Marcus T. C. Lau}
  \address{Marcus T. C. Lau;
 {Department of Mathematics, University of Michigan, 530 Church St, Ann Arbor, MI 48109, United States
 \\
\href{mtclau@umich.edu}{mtclau@umich.edu} }
 }

 \author[A. Bravo-Doddoli]{Alejandro Bravo-Doddoli}
  \address{Alejandro Bravo-Doddoli;
 {Department of Mathematics, University of Michigan, 530 Church St, Ann Arbor, MI 48109, United States
 \\
\href{abravodo@umich.edu}{abravodo@umich.edu}}
 }

\usepackage[autostyle]{csquotes}

\usetikzlibrary{external}
\usetikzlibrary{cd}

\AtBeginEnvironment{tikzcd}{\tikzexternaldisable}
\AtEndEnvironment{tikzcd}{\tikzexternalenable}

\usepackage{amsfonts}

\usepackage{amsthm}
\usepackage {amssymb}
\usepackage{pb-diagram}
\usepackage{tikz-cd}

\usepackage{anyfontsize}
\usepackage{mathtools}
\usepackage{amssymb}

\usetikzlibrary{matrix}
\pgfplotsset{width=11cm,compat=1.9}
\usepgfplotslibrary{external}
\newtheorem{Theorem}{Theorem}[section]

\newtheorem{Prop}[Theorem]{Proposition}
\newtheorem{Cor}[Theorem]{Corollary}
\newtheorem{lemma}[Theorem]{Lemma}
\newtheorem{Remark}[Theorem]{Remark}
\theoremstyle{plain}
\numberwithin{figure}{section}

\newtheorem*{backgroundtheorem}{Hadamard's Global Diffeomorphism Theorem}

\makeatletter
\newcommand*{\relrelbarsep}{.386ex}
\newcommand*{\relrelbar}{%
  \mathrel{%
    \mathpalette\@relrelbar\relrelbarsep
  }%
}
\newcommand*{\@relrelbar}[2]{%
  \raise#2\hbox to 0pt{$\m@th#1\relbar$\hss}%
  \lower#2\hbox{$\m@th#1\relbar$}%
}
\providecommand*{\rightrightarrowsfill@}{%
  \arrowfill@\relrelbar\relrelbar\rightrightarrows
}
\providecommand*{\leftleftarrowsfill@}{%
  \arrowfill@\leftleftarrows\relrelbar\relrelbar
}
\providecommand*{\xrightrightarrows}[2][]{%
  \ext@arrow 0359\rightrightarrowsfill@{#1}{#2}%
}
\providecommand*{\xleftleftarrows}[2][]{%
  \ext@arrow 3095\leftleftarrowsfill@{#1}{#2}%
}
\makeatother

\makeatletter
\providecommand*{\twoheadrightarrowfill@}{%
  \arrowfill@\relbar\relbar\twoheadrightarrow
}
\providecommand*{\twoheadleftarrowfill@}{%
  \arrowfill@\twoheadleftarrow\relbar\relbar
}
\providecommand*{\xtwoheadrightarrow}[2][]{%
  \ext@arrow 0579\twoheadrightarrowfill@{#1}{#2}%
}
\providecommand*{\xtwoheadleftarrow}[2][]{%
  \ext@arrow 5097\twoheadleftarrowfill@{#1}{#2}%
}
\makeatother

\makeatletter
\newcommand{\bigboxtimes}{%
  \mathop{\vphantom{\bigotimes}\mathpalette\bigbox@times\relax}%
  \slimits@
}
\newcommand{\bigbox@times}[2]{%
  \vcenter{%
    \hbox{%
      \scalebox{2}{$\m@th#1\boxtimes$}%
    }%
  }%
}
\makeatother

\makeatletter
\newcommand\niton{\mathrel{\m@th\mathpalette\canc@l\owns}}
\newcommand\canc@l[2]{{\ooalign{$\hfil#1/\mkern1mu\hfil$\crcr$#1#2$}}}
\makeatother



\makeatletter
\newcommand{\vas}{\bBigg@{3}}
\newcommand{\Vas}{\bBigg@{3.5}}
\newcommand{\vast}{\bBigg@{4}}
\newcommand{\Vast}{\bBigg@{5}}
\newcommand{\vastt}{\bBigg@{6}}
\newcommand{\Vastt}{\bBigg@{7}}
\makeatother

\DeclareMathOperator{\Exp}{Exp}

\DeclareMathOperator{\Ad}{Ad}
\DeclareMathOperator{\ad}{ad}

\DeclareMathOperator{\Id}{\operatorname{Id}}

\newcommand{\SLg}{\mathrm{SL}_2(\mathbb{R})}
\newcommand{\sla}{\mathfrak{sl}_2(\mathbb{R})}
\newcommand{\Lag}{\mathfrak{g}}
\newcommand{\MAS}{\mathcal{MaxStr}}
\newcommand{\MD}{\mathcal{M}}
\newcommand{\ND}{\mathcal{N}}
\newcommand{\F}{\operatorname{F}}
\newcommand{\Pm}{\operatorname{P}}
\newcommand{\Sym}{\operatorname{Sym}_1^+(2)}
\newcommand{\Tr}{\operatorname{tr}}
\newcommand{\T}{\mathrm{T}}
\newcommand{\SO}{\mathrm{SO}(2)}
\newcommand{\Tcut}{t_{\mathrm{cut}}}
\newcommand{\Tcon}{t_{\mathrm{con}}}
\newcommand{\Tmax}{t_{\mathrm{Max}}}

\def\R{\mathbb{R}}

\begin{document}

\begin{abstract}
We study the exponential map on the group of real two-by-two matrices with determinant one, endowed with a left-invariant Riemannian metric. Using the Euler-Poincar\'e equation and its linearization, we determine the conjugate and Maxwell times and identify a domain on which the exponential map is a global diffeomorphism onto the complement of the Maxwell stratum. This provides an alternative proof that the first Maxwell time coincides with the cut time. We obtain an explicit matrix description of the cut locus and characterize the globally minimizing geodesics. Furthermore, we analyze the first Maxwell-time function, determine its unique global minimum, and show that the injectivity radius is $\frac{\pi \sqrt{14}}{4}$. Finally, we characterize the intersection of the cut and conjugate loci and prove that every geodesic is equi-optimal.
\end{abstract}

\maketitle


\section{Introduction}\label{sec:int} 

In this paper, we study the exponential map on the special linear group $\SLg$  endowed with a left-invariant Riemannian metric. By left invariance, it suffices to study the exponential map at the identity element. The Riemannian problem considered here is the special case \(I_1=I_2=I_3\) of the family of symmetric left-invariant metrics studied by Podobryaev and Sachkov \cite{PodobryaevSachkov2018}, who used a Hamiltonian approach to determine the first Maxwell time, cut time, and cut locus. 

n contrast, we use a Lagrangian approach based on the Euler–Poincaré equation and its linearization. To make our treatment self-contained, we provide independent proofs of these results for the metric considered here. In particular, we identify a domain on which the exponential map is a global diffeomorphism onto the complement of the Maxwell stratum; see \ref{thm:main-3}. This establishes, through a different approach, that the first Maxwell time coincides with the cut time.

To the best of our knowledge, the following contributions are new. We obtain a direct matrix description of the Maxwell stratum, and hence of the cut locus, and identify the globally minimizing geodesics; see Lemma \ref{lem:g-ome-sym} and Corollary \ref{cor:cut-time}. In addition, we provide a detailed analysis of the first Maxwell-time function, determine its unique global minimum, compute the injectivity radius explicitly, characterize the intersection of the cut and conjugate loci, and prove that every geodesic is equi-optimal; see Proposition \ref{prp:f-max-t-reg} and Corollaries \ref{cor:rad-in}-\ref{cor:equi-opt}.


%

Our approach treats the exponential map as an optimal control problem and uses optimal synthesis to prove Theorem \ref{thm:main-3}. Optimal synthesis is a technique in geometric control theory that exploits the symmetries of the exponential map to identify Maxwell points, where distinct geodesics of equal length intersect \cite{jurdjevic1997geometric,agrachev2013control}. The corresponding Maxwell stratum provides a candidate for the cut locus. To verify that the Maxwell stratum is indeed the cut locus, we apply Hadamard's global diffeomorphism theorem \cite{krantz2002implicit} to a restriction of the exponential map to the domain corresponding to geodesic segments before their first Maxwell time. We show that this restriction is a global diffeomorphism onto the complement of the Maxwell stratum. 

We now introduce conjugate points, Maxwell points, and cut points, together with their associated sets, and present our main results.

   The special linear group $\SLg$ is a three-dimensional connected semisimple Lie group consisting of real $2\times2$ matrices with determinant one.
    A subgroup that plays a central role in our analysis is the special orthogonal group $\mathrm{SO}(2)$, consisting of orientation-preserving orthogonal matrices. Recall that the Lie algebra $\sla$ consists of traceless  $2\times2$ real matrices, while its subalgebra $\mathfrak{so}(2)$ consists of skew-symmetric matrices. We equip $\SLg$ with the left-invariant Riemannian metric induced by the inner product on $\sla$ defined by
    $$\langle \xi,\eta \rangle  \coloneqq \frac{1}{2}\Tr(\xi\eta^\T) , \text{ for } \xi,\eta \in \sla.$$
    This inner product induces a norm and a unit sphere on $\sla$, denoted by \(\|\cdot\|\) and $\mathbb{S}^2(\sla)$, respectively. For simplicity, we set  $\mathcal{E} \coloneqq (0,\infty)\times \mathbb{S}^2(\sla) $.

    Since every left-invariant Riemannian metric on a connected Lie group is geodesically complete \cite{Helgason1978}, the Hopf–Rinow theorem implies that the Riemannian exponential map $\Exp\vcentcolon\sla\to \SLg$ is defined on the entire Lie algebra and is surjective. Recall that $\gamma(t,\xi)$ denotes the geodesic satisfying the initial condition
    $$ \gamma(0,\xi) = \Id,\qquad \text{and} \qquad \dot{\gamma}(0,\xi) = \xi \in \sla.$$
    then 
    $$ \Exp(\xi) = \gamma(1,\xi). $$
    Since the exponential map is a diffeomorphism at the origin $0_2 \in \sla$, we focus on its behavior away from the origin. We therefore introduce the diffeomorphism 
    $$\Psi\vcentcolon\mathcal{E} \to \sla\setminus\{0_2\},\qquad \Psi(t,\hat{\xi}) = t\hat{\xi}. $$
    Using the homogeneity of the geodesic equation, we have $\gamma(t,\hat{\xi}) = \gamma(1,t\hat{\xi})$. Consequently, in these coordinates, the Riemannian exponential map takes the form
    $$ \Exp(t,\hat{\xi}) \coloneqq \Exp(\Psi(t,\hat{\xi})) = \Exp(t\hat{\xi}) = \gamma(t ,\hat{\xi}).$$


    A crucial step in the optimal synthesis is to study the symmetries of the exponential map.  A key symmetry in our analysis arises from the adjoint action of $\SO$ on $\sla$. The corresponding action on $\SLg$ defines an isometry $\Phi_Q(g) = Q gQ^{-1}$, with the property that $\Phi_Q(\Id) = \Id$ for all $Q\in \SO$. Then the exponential map satisfies 
    $$  (\Phi_Q \circ \Exp)(t,\xi) = \Exp(t,\Ad_Q(\xi)), $$
    for all $\xi\in \mathbb{S}^2(\sla)$, $Q\in \mathrm{SO}(2)$, and $ t \in [0,\infty)$. Let $\tilde{\xi}$ be a fixed unit vector spanning $\mathfrak{so}(2)$. This symmetry motivates us to define the following set, which is fundamental to our work:
    \begin{equation*}
    \Gamma_\omega  \coloneqq \left\{ \xi \in \mathbb{S}^2(\sla) \vcentcolon \omega\tilde{\xi} = \frac{1}{2}(\xi-\xi^\T) \right\}.
\end{equation*}
    As shown in Proposition \ref{prp:iso-action}, $\Gamma_\omega$ is invariant under the adjoint action of $\SO$, and this action is transitive on $\Gamma_\omega$. Consequently, all geodesics with initial velocities in $\Gamma_\omega$ are related by isometries fixing the identity. Therefore, to study their optimality properties, it suffices to consider a single geodesic $\gamma$ with initial velocity $\dot{\gamma}(0) \in \Gamma_\omega$.

    The first relevant set associated with the exponential map is the \textbf{conjugate locus}
    $$ \mathcal{Con} \coloneqq \Big\{ \Exp (t,\xi) \vcentcolon (t,\xi)\in\mathcal{E}  \;\text{and}\; d\Exp|_{(t,\xi)} \;\text{is singular} \Big\} .  $$
    If $(\Tcon ,\xi)\in  \mathcal{E}$ is a pair such that $\Exp(\Tcon ,\xi) \in \mathcal{Con}$, then  $\Exp(\Tcon ,\xi)$ is called a \textbf{conjugate point} along the geodesic $\gamma(t)\coloneqq \Exp(t,\xi)$ and $\Tcon $ is called a \textbf{conjugate time}. Recall that the conjugate locus has measure zero. Moreover, since the metric is real analytic, the conjugate times along each geodesic form a discrete set \cite{milnor1963morse}.

    By the preceding discussion and the transitivity of the $\SO$-action on $\Gamma_\omega$, every geodesic with initial velocity in $\Gamma_\omega$ has the same first conjugate time. In particular, 
   \begin{equation}\label{eq:con-Gamma}
       \Exp(\Tcon ,\Ad_Q(\xi)) \in \mathcal{Con},\;\;\text{for all}\;\;  Q\in \mathrm{SO}(2).
   \end{equation}
   This property motivates the following notation. Let $\Tcon (\omega)$ denote the \textbf{first conjugate time} along a geodesic $\gamma$ with initial velocity $\dot{\gamma}(0)\in \Gamma_\omega$. By convention, $\Tcon (\omega) = \infty$ if $\gamma$ has no conjugate points. With this notation, our first main theorem is as follows.
   \begin{Theorem}\label{thm:classic}
    Let $\xi \in \Gamma_\omega$. If $\gamma$ is a geodesic satisfying the initial conditions $\gamma(0) = \Id$ and $\dot{\gamma}(0) = \xi$, then 
    $$ \Tcon (\omega) = \begin{cases}
        \frac{\pi}{\sqrt{2\omega^2-1}}& \text{if}\; \omega \in [-1,-\frac{1}{\sqrt{2}})\cup (\frac{1}{\sqrt{2}},1 ], \\
        \infty & \text{if}\; \omega \in [-\frac{1}{\sqrt{2}},\frac{1}{\sqrt{2}} ]. \\
    \end{cases} $$
    Furthermore, the conjugate locus of the identity element is 
    $$ \mathcal{Con}(\Id) = \mathrm{SO}(2).  $$
     \end{Theorem}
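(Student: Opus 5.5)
The plan is to make the geodesics explicit from the Euler--Poincar\'e equation, compute the differential of $\Exp$ in adapted coordinates, and read off where it degenerates.

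\textbf{Step 1: explicit geodesics.} For the metric $\langle\xi,\eta\rangle=\frac12\Tr(\xi\eta^\T)$, the Euler--Poincar\'e equation for the body velocity $\xi(t)=\gamma^{-1}\dot\gamma$ reduces to a commutator equation of the form $\dot\xi=[\xi^\T,\xi]$, up to the normalisation, which I will check. I will write $\xi=\omega\tilde\xi+p$, with $p$ symmetric and traceless, so that $\|p\|^2=1-\omega^2$. Then:
\begin{itemize}
\item The skew part $\omega$ is conserved.
\item The symmetric part satisfies $\dot p=c\,\omega[\tilde\xi,p]$ for a constant $c$ (formally $c=-2$), so $p(t)=\Ad_{\exp(c\omega t\tilde\xi)}p$.
\item Consequently $\xi(t)=\Ad_{\exp(c\omega t\tilde\xi)}\xi$, and the geodesic is the product of two one-parameter subgroups,
$$\gamma(t)=\exp\bigl(t(\xi-c\omega\tilde\xi)\bigr)\exp(c\omega t\tilde\xi).$$
\end{itemize}
I will verify this formula directly by computing $\gamma^{-1}\dot\gamma$.

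\textbf{Step 2: the differential of $\Exp$ in adapted coordinates.} By Proposition \ref{prp:iso-action}, $\Gamma_\omega$ is a single $\SO$-orbit. I will therefore parametrise $\mathcal E$ (away from $\omega=\pm1$) by $(t,\omega,\theta)$, where $\theta$ is the angle of the $\Ad_Q$ rotation. The differential $d\Exp$ then has three columns:
\begin{itemize}
\item the $t$-derivative, which is $\dot\gamma$;
\item the $\theta$-derivative, which by equivariance $\Phi_Q\circ\Exp=\Exp\circ\Ad_Q$ is the Killing field $[\tilde\xi,\gamma]$, left-translated to the identity;
\item the $\omega$-derivative, which is obtained by differentiating the closed form of Step 1.
\end{itemize}
Pulling everything back to $\sla$ by left translation, I expect the determinant to factor into a trivially nonvanishing factor times an expression involving $\sin(t\lambda/2)$ (or $\sin(t\lambda)$). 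Here $\lambda$ is the rotation frequency of $\exp(tA)$ with $A=\xi-c\omega\tilde\xi$, given by $\lambda^2=-\det A$. With the correct normalisation this should be a positive multiple of $2\omega^2-1$.

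\textbf{Step 3: the conjugate times.} When $2\omega^2\le 1$, the generator $A$ is hyperbolic or parabolic, so $\exp(tA)$ grows or is unipotent. I expect the determinant to then be a $\sinh$- or polynomial-type expression with no positive zeros, giving $\Tcon(\omega)=\infty$. When $2\omega^2>1$, $\exp(tA)$ is elliptic, and the first zero of the determinant will be $t=\pi/\sqrt{2\omega^2-1}$. Lemma-level care is needed for the degenerate points $\omega=\pm1$, where $\xi=\pm\tilde\xi$ and the $\theta$-coordinate collapses. There I will directly compute the Jacobi fields along $\gamma(t)=\exp(\pm t\tilde\xi)$, which gives conjugate time $\pi$, consistent with the formula.

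\textbf{Step 4: the conjugate locus.} I will evaluate $\gamma(\Tcon(\omega))$. At $t=\pi/\lambda$ we have $\exp(tA)=-\Id$, or more precisely a rotation conjugate to one, which must be checked. Hence $\gamma(\Tcon)$ is a product of rotations and so lies in $\SO$. Conversely, as $\omega$ ranges over $(1/\sqrt2,1]$ and over $[-1,-1/\sqrt2)$, the rotation angle $c\omega\Tcon(\omega)$ plus the contribution from the first factor varies continuously. It should sweep out every element of $\SO$; I will verify this with the intermediate value theorem, using the limits $\omega\to\pm1/\sqrt2$, where $\Tcon\to\infty$, and $\omega=\pm1$. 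Later conjugate times give nothing new, since they also land in $\SO$. This yields $\mathcal{Con}(\Id)=\SO$.

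I expect the main obstacle to be Step 2: obtaining a clean factorisation of the Jacobian determinant, specifically the $\omega$-derivative of the product of two exponentials. I would first try to simplify by conjugating everything by $\exp(c\omega t\tilde\xi)$, which reduces the computation to derivatives of $\exp(tA)$ with respect to the entries of $A$, using the standard formula $d\exp_A=\exp(A)\frac{1-e^{-\ad_A}}{\ad_A}$.
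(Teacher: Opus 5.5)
\textbf{Route.} Your route is genuinely different from the paper's. The paper solves the linearized Euler--Poincar\'e system for explicit Jacobi fields $W_2,W_3$, with the computations omitted. You instead compute $d\Exp$ in the coordinates $(t,\omega,\theta)$, where two of the three columns come from symmetry, and this route is viable. First fix the signs. With $\xi(t)=\Ad_{\exp(c\omega t\tilde\xi)}\xi$, the reconstruction is $\exp(t(\xi+c\omega\tilde\xi))\exp(-c\omega t\tilde\xi)$. For $c=-2$ this is $\exp(tA)\exp(2\omega t\tilde\xi)$ with $A=\xi^\T$, and the rotation frequency satisfies $\lambda^2=\det A=2\omega^2-1$, not $-\det A$. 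Now apply $\Ad_{\exp(2\omega t\tilde\xi)}$, a rotation of $\sla$, to the three left-trivialized columns. They become $\xi$, then $\int_0^t e^{-s\ad_A}(\partial_\omega A)\,ds+2t\tilde\xi$, then $e^{-t\ad_A}\tilde\xi-\tilde\xi$. A short computation in the basis $A,\ \xi_2,\ [A,\xi_2]$ shows that the determinant is a nonzero multiple of
\[
\sin(\alpha t)\bigl(\sin(\alpha t)-2(1-\omega^2)\,\alpha t\cos(\alpha t)\bigr),\qquad \alpha=\sqrt{2\omega^2-1}.
\]
This is exactly the factorization of the paper's $\det(W_1,W_2,W_3)$.

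\textbf{Gap in Steps 2--3.} Your expectation in Step 2, a nonvanishing factor times a sine, is therefore wrong, and Step 3 is missing the key estimate. For $2\omega^2>1$ you must show that the second factor has no zero for $\alpha t\in(0,\pi)$. This uses $2(1-\omega^2)<1$ together with $\tan x>x$ on $(0,\frac{\pi}{2})$, and $\cos x\le 0$ on $[\frac{\pi}{2},\pi)$. For $2\omega^2<1$ the same computation gives a nonzero multiple of $\sinh(\beta t)\bigl(2(1-\omega^2)\beta t\cosh(\beta t)-\sinh(\beta t)\bigr)$, where $\beta=\sqrt{1-2\omega^2}$. Here you need $2(1-\omega^2)>1$ and $\tanh x<x$. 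At $\omega=\pm\frac{1}{\sqrt2}$ the determinant degenerates to a multiple of $t^2(t^2+3)$. These are precisely the inequalities the paper's lemmas rely on.

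\textbf{Error in Step 4.} The sentence \enquote{later conjugate times give nothing new, since they also land in $\SO$} is false. Set $x=\alpha t$. The second factor equals $2(1-\omega^2)\pi>0$ at $x=\pi$ and tends to $-1$ as $x\to\frac{3\pi}{2}$, so it vanishes at some $x\in(\pi,\frac{3\pi}{2})$. There $\sin(\alpha t)\neq 0$, so $\exp(tA)$ has nonzero $\xi_1$-component $\frac{\sin(\alpha t)}{\alpha}\sqrt{1-\omega^2}$, and hence $\gamma(t)\notin\SO$. Equivalently, your $\theta$-column $\Ad_{\gamma^{-1}}\tilde\xi-\tilde\xi$ vanishes exactly when $\gamma(t)$ commutes with $\tilde\xi$, that is, only at $\alpha t\in\pi\mathbb{Z}$.

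So these are conjugate points outside $\SO$. What your argument can prove is that the set of \emph{first} conjugate points is $\SO$. For that statement your sweeping argument is correct: $\gamma(\Tcon(\omega))=-\exp\bigl(\frac{2\pi\omega}{\alpha}\tilde\xi\bigr)$, and $\frac{2\pi\omega}{\alpha}$ decreases from $\infty$ to $2\pi$ on $(\frac{1}{\sqrt2},1]$.

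The paper's own proof has the same limitation, since its final \enquote{conversely} sentence uses only the first conjugate time. If $\mathcal{Con}$ is read literally as the set of all conjugate points, the equality $\mathcal{Con}(\Id)=\SO$ does not hold. You should state and prove it for the first conjugate locus, and drop the claim about later conjugate times.
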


    The second fundamental set for our study is the \textbf{set of Maxwell points}
    $$ \mathcal{Max}\coloneqq \{ \Exp(t,\xi_1) \vcentcolon (t,\xi_1),(t,\xi_2)\in \mathcal{E},\;\xi_1\neq \xi_2, \;\;\text{and}\;\;\Exp(t,\xi_1) = \Exp(t,\xi_2)\}.$$
    If $(\Tmax,\xi)\in  \mathcal{E}$ is a pair such that $\Exp(\Tmax,\xi) \in \mathcal{Max}$, then the point  $\Exp(\Tmax,\xi)$ is called a \textbf{Maxwell point} and $\Tmax$ is a \textbf{Maxwell time}.
    Since the geodesics are real analytic, if two distinct geodesics intersect at a set of times having a finite accumulation point, then they coincide identically. This would imply that their initial velocities coincide. Hence, for $\xi_1\neq \xi_2$, the set of times $t>0$ such that $\Exp(t,\xi_1) =  \Exp(t,\xi_2)$ is discrete. 

    In general, determining the set of all Maxwell points is difficult. One therefore typically restricts attention to Maxwell points arising from symmetries of the exponential map. In this paper, we consider only the case in which
    \begin{equation}\label{eq:Gam-sym}
        \Exp(t,\xi_1) = \Exp(t,\xi_2), \qquad \text{with}\;\xi_1 \in \Gamma_\omega,\;\;\xi_2 \in \Gamma_{-\omega},\;\;\xi_1\neq \xi_2.
    \end{equation}
    For brevity, we refer to such Maxwell points as \textbf{Maxwell points corresponding to the $\Gamma_\omega$-symmetry}.\footnote{From the perspective of Hamiltonian dynamics, Lie–Poisson reduction restricts the geodesic flow to two-dimensional coadjoint orbits, given by the level sets of the Casimir function. The resulting one-degree-of-freedom Hamiltonian system is therefore integrable. From this viewpoint, geodesics corresponding to the $\Gamma_\omega$-symmetry have the same values of the Hamiltonian and the Casimir function.}
    By the $\SO$-symmetry and the transitivity of its action on $\Gamma_\omega$, an identity analogous to Eq. \eqref{eq:con-Gamma} holds for these Maxwell points. Hence, the \textbf{first Maxwell time} corresponding to the $\Gamma_\omega$-symmetry is the same for every geodesic with initial velocity in $\Gamma_\omega$. We denote this time by $\Tmax(\omega)$. By convention, $\Tmax(\omega) = \infty$ if no Maxwell points corresponding to the \(\Gamma_\omega\)-symmetry occur along \(\gamma\). 

    \begin{Theorem}\label{thm:Max-time}
        The first Maxwell time $\Tmax(\omega)$ corresponding to the $\Gamma_\omega$-symmetry is the smallest positive solution $t$ to the following  transcendental equations:
        \begin{itemize}
            \item For $\omega \in [-1,-\frac{1}{\sqrt{2}}) \cup (\frac{1}{\sqrt{2}},1]$,
             \begin{equation*}
            \cos(t\sqrt{2\omega^2-1}) \sin(2\omega t)  -\frac{\omega}{\sqrt{2\omega^2-1}} \sin(t\sqrt{2\omega^2-1})\cos(2\omega t) = 0.
        \end{equation*}
        
        \item For $\omega = \pm\frac{1}{\sqrt{2}}$,
        \begin{equation*}
            \sin(2\omega t) -  \omega t \cos(2\omega t) = 0.
        \end{equation*}
        \item For $\omega \in (-\frac{1}{\sqrt{2}},0)\cup (0,\frac{1}{\sqrt{2}})$,
        \begin{equation*}
    \begin{split}
        \cosh(t\sqrt{1-2\omega^2}) \sin(2\omega t)  -\frac{\omega}{\sqrt{1-2\omega^2}} \sinh(t\sqrt{1-2\omega^2})\cos(2\omega t) = 0.
    \end{split}
    \end{equation*}
        \end{itemize}
       Furthermore, for $\omega\neq 0$, $\Tmax(\omega) \in (\frac{\pi}{2|\omega|},\frac{\pi}{|\omega|}]$. 
    \end{Theorem}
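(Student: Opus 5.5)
The plan is to make the Maxwell condition \eqref{eq:Gam-sym} completely explicit. Throughout, fix $\tilde\xi=J=\begin{pmatrix}0&-1\\1&0\end{pmatrix}$, so $\|J\|=1$. First I will write geodesics in closed form. For the metric $\frac12\Tr(\xi\eta^\T)$, the Euler–Poincaré equation is $\dot\xi=[\xi^\T,\xi]$. The skew part of $\xi$ is conserved along it, and I will check that
$$\gamma(t,\xi)=\exp(t\xi^\T)\exp\bigl(t(\xi-\xi^\T)\bigr)$$
solves it. For $\xi=S+\omega J\in\Gamma_\omega$ with $S$ symmetric traceless and $\|S\|^2=1-\omega^2$, this becomes $\gamma(t)=\exp(tA)\,R(2\omega t)$. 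Here $A=S-\omega J$ and $R(\theta)=\cos\theta\,\Id+\sin\theta\,J$. Since $A^2=(1-2\omega^2)\Id$, we get $\exp(tA)=c(t)\Id+s(t)A$. In the three regimes of the theorem, $c=\cosh(\mu t)$, $s=\sinh(\mu t)/\mu$ with $\mu=\sqrt{1-2\omega^2}$; or $c=1$, $s=t$; or $c=\cos(\mu t)$, $s=\sin(\mu t)/\mu$ with $\mu=\sqrt{2\omega^2-1}$.

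Next I will reduce the Maxwell condition to two scalar equations. Take $\xi_1=S_1+\omega J\in\Gamma_\omega$ and $\xi_2=S_2-\omega J\in\Gamma_{-\omega}$ with $\omega\ne0$, so $\xi_1\neq\xi_2$ automatically. The equation $\Exp(t,\xi_1)=\Exp(t,\xi_2)$ is equivalent to
$$(c\Id+sS_1-s\omega J)\,R(\theta)=c\Id+sS_2+s\omega J,\qquad \theta=4\omega t.$$
I will split both sides along $\sla\oplus\R\Id=\R\Id\oplus\R J\oplus\{\text{sym.\ traceless}\}$, using that $S_1J$ is symmetric traceless.
\begin{itemize}
\item The symmetric part reads $s(\cos\theta\,S_1+\sin\theta\,S_1J)=sS_2$. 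Since $S\mapsto S J$ is a rotation of the symmetric plane, this determines an admissible $S_2$ with $\|S_2\|=\|S_1\|$.
\item The $\Id$ and $J$ components give $c(1-\cos\theta)=s\omega\sin\theta$ and $c\sin\theta=s\omega(1+\cos\theta)$.
\end{itemize}
Applying half-angle formulas, these two equations become $\sin(2\omega t)F(t)=0$ and $\cos(2\omega t)F(t)=0$, where
$$F(t)=c(t)\sin(2\omega t)-\omega s(t)\cos(2\omega t).$$
Because $\sin$ and $\cos$ never vanish simultaneously, a Maxwell point from the $\Gamma_\omega$-symmetry occurs at time $t$ iff $F(t)=0$. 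Substituting the three forms of $(c,s)$ gives exactly the three transcendental equations in the statement. Hence $\Tmax(\omega)$ is the smallest positive zero of $F$.

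For the localization I take $\omega>0$ by the symmetry $\omega\mapsto-\omega$; this flips the sign of $F$. I treat the three subintervals as follows.
\begin{itemize}
\item On $(0,\frac{\pi}{4\omega})$ I will show $\omega s/c<\tan(2\omega t)$. In the hyperbolic case, $\omega\tanh(\mu t)/\mu<\omega t<\tan(2\omega t)$. In the parabolic case, $\omega t<\tan(2\omega t)$ directly. In the trigonometric case, $\mu^2=2\omega^2-1\le\omega^2$ gives $\mu<2\omega$. Monotonicity of $\tan x/x$ on $(0,\pi/2)$ then gives $\omega\tan(\mu t)/\mu\le\frac12\tan(2\omega t)$.
\item At $t=\frac{\pi}{4\omega}$ we have $F=c\neq0$.
\item On $(\frac{\pi}{4\omega},\frac{\pi}{2\omega}]$, $\sin(2\omega t)\ge0$ and $-\cos(2\omega t)>0$, while $c,s>0$. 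So $F>0$ there, and in particular $F(\frac{\pi}{2\omega})=\omega s>0$.
\end{itemize}
In the trigonometric case, positivity of $c$ uses $\mu t<\mu\pi/(2\omega)<\pi/2$. At $t=\pi/\omega$ we have $F=-\omega s\le0$, since $\mu t\le\pi$. The intermediate value theorem then yields a zero in $(\frac{\pi}{2\omega},\frac{\pi}{\omega}]$. The endpoint $\pi/\omega$ is attained only when $\omega=1$ and $s(\pi)=0$, which explains the closed bracket.

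I expect the main obstacle to be the first step. I must verify the closed-form geodesic from the Euler–Poincaré equation of the earlier sections, and confirm that the paper's sign and normalization of $\tilde\xi$ match. The second delicate point is the trigonometric-regime inequality on $(0,\frac{\pi}{4\omega})$, where the signs of $c$ and $s$ must be tracked carefully.
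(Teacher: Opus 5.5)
Your proposal is correct and follows essentially the paper's route. Both use the explicit geodesic formula of Lemma~\ref{lem:expl-form} to reduce the Maxwell condition to the vanishing of a single scalar $F$: the paper reads it off as the $\xi_3$-coefficient of $\gamma_\omega(t)-\gamma_1(t)$, whose $\Id$-coefficient vanishes identically, while you right-multiply by $R(2\omega t)$ and obtain $\sin(2\omega t)F=\cos(2\omega t)F=0$. Both then apply the same sign analysis on $(0,\frac{\pi}{4\omega})$ and $[\frac{\pi}{4\omega},\frac{\pi}{2\omega}]$, followed by the intermediate value theorem on $(\frac{\pi}{2\omega},\frac{\pi}{\omega}]$.

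Treating an arbitrary $\xi_1\in\Gamma_\omega$ and all three regimes at once via $(c,s)$ is only a cosmetic streamlining. There is one small slip: $\mu\frac{\pi}{2\omega}<\frac{\pi}{2}$ fails at $\omega=1$, so $c(\frac{\pi}{2})=0$ there. This is harmless, since only $c\ge0$ and $s>0$ are needed on that interval; indeed $F=\sin t$ when $\omega=1$.
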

    Two important observations follow from Theorem \ref{thm:Max-time}. First, the construction in Eq. \eqref{eq:Gam-sym} is invariant under $\omega \to -\omega$, and hence $\Tmax(\omega) = \Tmax(-\omega)$. Second, since $\Tmax(\omega) \in (\frac{\pi}{2|\omega|},\frac{\pi}{|\omega|}]$, we have $\Tmax(\omega) \to \infty$ as $\omega \to 0$. Accordingly, we set $\Tmax(0) = \infty$.

    A direct consequence of Theorems \ref{thm:classic} and \ref{thm:Max-time} is the following corollary.
    \begin{Cor}\label{cor:t-max-t-con}
    For $\omega \in (-1,0)\cup (0,1)$, $ \Tmax(\omega) <  \Tcon (\omega)$, while $\Tmax(0) =  \Tcon (0) = \infty$ and $ \Tmax(\pm1) =  \Tcon (\pm1) = \pi$.
    \end{Cor}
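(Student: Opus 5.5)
We compare the two explicit descriptions from Theorem \ref{thm:classic} and Theorem \ref{thm:Max-time}, treating three ranges of $\omega$ separately, and use the symmetry $\Tmax(\omega)=\Tmax(-\omega)$, $\Tcon(\omega)=\Tcon(-\omega)$ to assume $\omega\ge 0$.

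\emph{Endpoint cases.} For $\omega=0$ both times are $\infty$: $\Tcon(0)=\infty$ by Theorem \ref{thm:classic}, and $\Tmax(0)=\infty$ by the convention fixed after Theorem \ref{thm:Max-time}. For $\omega=1$, Theorem \ref{thm:classic} gives $\Tcon(1)=\pi/\sqrt{1}=\pi$. The Maxwell equation becomes $\cos t\sin 2t-\sin t\cos 2t=\sin(2t-t)=\sin t=0$, whose smallest positive root is $t=\pi$. This is consistent with $\Tmax(1)\in(\pi/2,\pi]$, so $\Tmax(1)=\pi$.

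\emph{The range $0<\omega\le 1/\sqrt2$.} Here Theorem \ref{thm:classic} gives $\Tcon(\omega)=\infty$. Theorem \ref{thm:Max-time} gives $\Tmax(\omega)\le \pi/\omega<\infty$, so the strict inequality is immediate.

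\emph{The range $1/\sqrt2<\omega<1$.} Set $s=\sqrt{2\omega^2-1}\in(0,1)$. Then $\Tcon(\omega)=\pi/s$, while $\Tmax(\omega)\le \pi/\omega$ by Theorem \ref{thm:Max-time}. Since $s^2-\omega^2=\omega^2-1<0$, we have $s<\omega$, hence
\[
\Tmax(\omega)\le \frac{\pi}{\omega}<\frac{\pi}{s}=\Tcon(\omega).
\]
This range is the only place where something must be checked, and the check reduces to the elementary inequality $\sqrt{2\omega^2-1}<\omega$ for $|\omega|<1$. That inequality becomes an equality exactly at $|\omega|=1$, which matches the equality $\Tmax(\pm1)=\Tcon(\pm1)=\pi$ found above. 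The strict inequality therefore relies on the upper bound $\Tmax(\omega)\le\pi/|\omega|$ from Theorem \ref{thm:Max-time}, which we take as given.
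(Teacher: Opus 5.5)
Your proposal is correct and follows essentially the same route as the paper. For $0<|\omega|\le 1/\sqrt{2}$ you compare $\Tcon(\omega)=\infty$ with the finite bound $\Tmax(\omega)\le\pi/|\omega|$. For $1/\sqrt{2}<|\omega|<1$ you use the chain $\Tmax(\omega)\le\pi/|\omega|<\pi/\sqrt{2\omega^2-1}=\Tcon(\omega)$, and you read off the endpoints $\omega=0,\pm1$ directly. The paper writes the first inequality as strict, while your non-strict version suffices because the second inequality is strict.
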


    \begin{Prop}\label{prp:f-max-t-reg}
    The restriction $\Tmax\vcentcolon(0,1)\to (0,\infty)$ is $C^1$ and attains its unique global minimum at
    $$\omega = \frac{3}{\sqrt{14}} ,\qquad \Tmax\bigg(\frac{3}{\sqrt{14}}\bigg) = \frac{\pi\sqrt{14}}{4}. $$
    \end{Prop}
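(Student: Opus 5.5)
The plan is to treat all three cases of Theorem \ref{thm:Max-time} at once as the zero set of a single real-analytic function. Let $C(z)=\cos\sqrt{z}$ and $S(z)=\sin\sqrt{z}/\sqrt{z}$. These are entire functions of $z$: for $z<0$ they equal $\cosh\sqrt{-z}$ and $\sinh\sqrt{-z}/\sqrt{-z}$, and $C(0)=S(0)=1$. Define
\begin{equation*}
F(t,\omega)\coloneqq C\big(t^2(2\omega^2-1)\big)\sin(2\omega t)-\omega t\, S\big(t^2(2\omega^2-1)\big)\cos(2\omega t).
\end{equation*}
Then $F$ is real analytic on $(0,\infty)\times(0,1)$. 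Each of the three transcendental equations in Theorem \ref{thm:Max-time} is exactly $F(t,\omega)=0$; the first and third are multiplied through by $t$. So $\Tmax(\omega)$ is the smallest zero of $F(\cdot,\omega)$ in $(\frac{\pi}{2\omega},\frac{\pi}{\omega}]$, and the case boundary at $\omega=1/\sqrt2$ needs no separate treatment.

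\textbf{Step 1: $C^1$ regularity.} By the implicit function theorem, it suffices to show $\partial_tF(\Tmax(\omega),\omega)\neq0$ for every $\omega\in(0,1)$. I expect to prove this by rewriting the equation where $\cos(2\omega t)\ne 0$ as $\tan(2\omega t)=\omega t\,S/C$. With $v=2\omega t\in(\pi,2\omega\pi/\omega]$, this becomes a comparison between $\tan v$ and $\frac{\omega}{s}\tan(st)$ on one branch of $\tan$. On the relevant branch $v\in(\pi,3\pi/2)$ or its continuation, the left side increases in $t$ strictly faster than the right side. This is the kind of monotonicity already used to locate $\Tmax$ in $(\frac{\pi}{2|\omega|},\frac{\pi}{|\omega|}]$ in Theorem \ref{thm:Max-time}. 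It gives a transversal, simple root, and continuity of the root in $\omega$ (hence that it is the \emph{first} root) follows. Points where $\cos(2\omega t)=0$ need separate checking; there $F=0$ forces $C=0$, and I will treat them directly using $\partial_tF$.

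\textbf{Step 2: locate the minimum.} I first check the candidate. At $\omega=3/\sqrt{14}$ we have $2\omega^2-1=2/7$, and at $t=\pi\sqrt{14}/4$ we get $2\omega t=3\pi/2$ and $t\sqrt{2\omega^2-1}=\pi/2$. Hence both cosines vanish, $F=0$, and $t\in(\frac{\pi}{2\omega},\frac{\pi}{\omega}]$. Next, differentiating implicitly gives $\Tmax'(\omega)=-\partial_\omega F/\partial_tF$. Since $\partial_tF\neq0$ has constant sign along the curve by Step 1, the critical points are the zeros of $\partial_\omega F$ on the curve $F=0$. I will show that the system $F=\partial_\omega F=0$ forces $\cos(2\omega t)=\cos(st)=0$ (with $s=\sqrt{2\omega^2-1}$). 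A convenient way is to write $\partial_\omega F$ as a combination of $\partial_tF$ and terms proportional to $\cos(2\omega t)$ and $\cos(st)$, using the homogeneity of the arguments $2\omega t$ and $st$. Combined with the window $2\omega t\in(\pi,2\pi]$ and $st<2\omega t$, this leaves only $2\omega t=3\pi/2$, $st=\pi/2$, which has the unique solution $\omega=3/\sqrt{14}$.

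\textbf{Step 3: global minimum.} Uniqueness of the global minimum then follows from the boundary behaviour. We have $\Tmax(\omega)>\frac{\pi}{2\omega}\to\infty$ as $\omega\to0^+$, and $\Tmax(\omega)\to\pi$ as $\omega\to1^-$ by Corollary \ref{cor:t-max-t-con} and continuity. Since $\pi\sqrt{14}/4<\pi$, and $\Tmax$ has only one critical point, that point is the unique global minimum; equivalently, $\Tmax'<0$ before it and $\Tmax'>0$ after it. The main obstacle I expect is the algebra in Step 2 showing that $\partial_\omega F=0$ on the curve forces both cosines to vanish. If a clean identity does not appear, my fallback is a direct sign analysis of $\Tmax'$ in the variables $u=st$, $v=2\omega t$. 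There the curve becomes $\tan v=\frac{v}{2u}\tan u$, and the constraint $v^2-2u^2=\ldots$ ties $\omega$ to the ratio $u/v$.
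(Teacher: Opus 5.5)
Your unified function $F$ improves on the paper's route. Since $C$ and $S$ are entire, $F$ is analytic across $\omega=\frac{1}{\sqrt{2}}$, and Step 1 holds. One computes $\partial_tF=\omega C\cos(2\omega t)+tS\sin(2\omega t)$. At any positive zero this equals $\frac{\omega\cos(2\omega t)}{C}(C^2+t^2S^2)$ if $C\neq0$, and $\pm\frac{1}{\sqrt{2\omega^2-1}}$ if $C=0$, so every zero is simple. This gives $C^1$ regularity at $\omega=\frac{1}{\sqrt{2}}$ directly, replacing the paper's Taylor matching of one-sided derivatives.

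The gap is in Step 2. The claim that $F=\partial_\omega F=0$ forces $\cos(2\omega t)=\cos(st)=0$ is false, so no identity of the kind you describe can prove it. A direct computation gives, for $\omega\neq\frac{1}{\sqrt{2}}$,
\[
\partial_\omega F=\frac{t\cos(2\omega t)}{2\omega^2-1}\Big[S-2(1-\omega^2)\,C\Big],
\]
with no $\partial_tF$ term. So critical points come either from $\cos(2\omega t)=0$ or from the vanishing of the bracket. In the trigonometric regime, with $\beta=st$ and $\cos\beta\neq0$, the bracket equals $\frac{\cos\beta}{\beta}\big(\tan\beta-2(1-\omega^2)\beta\big)$. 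This vanishes at some $\beta\in(\pi,\frac{3\pi}{2})$, since $2(1-\omega^2)<1$. Pairing such a $\beta$ with $F=0$ gives genuine solutions of your system with both cosines nonzero, for instance for $\omega$ near $\frac{1}{\sqrt{2}}$. These solutions sit on later zeros of $F(\cdot,\omega)$, beyond $\Tmax(\omega)$.

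What is missing is exactly the sign analysis that makes up the paper's proof (Propositions \ref{prp:dt-dome-neg-1} and \ref{prp:dt-dome-neg-2}); your fallback names it but does not carry it out. On the first branch you need $\beta=\Tmax(\omega)\sqrt{2\omega^2-1}<\pi$. This follows from $\Tmax\le\frac{\pi}{\omega}$ and $\sqrt{2\omega^2-1}<\omega$. The bound $st<2\omega t$ that you quote only gives $\beta<2\pi$, which does not exclude the bracket zero in $(\pi,\frac{3\pi}{2})$. With $\beta<\pi$ in hand:
\begin{itemize}
\item on $(0,\frac{\pi}{2})$, $\tan\beta>\beta>2(1-\omega^2)\beta$;
\item on $(\frac{\pi}{2},\pi)$, $\tan\beta<0$;
\item in the hyperbolic regime, with $\beta=t\sqrt{1-2\omega^2}$, one has $\tanh\beta<\beta<2(1-\omega^2)\beta$;
\item at $\omega=\frac{1}{\sqrt{2}}$, the quotient $\big[S-2(1-\omega^2)C\big]/(2\omega^2-1)$ extends analytically to $1+\frac{t^2}{3}$.
\end{itemize}
Together these show that this quotient is positive along the first branch. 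Hence
\[
\Tmax'(\omega)=-\frac{t\,C\,\big[S-2(1-\omega^2)C\big]}{\omega\,(2\omega^2-1)\,(C^2+t^2S^2)},
\]
evaluated at $t=\Tmax(\omega)$, has the sign of $-C$. Only then do critical points force $C=0$ and $\cos(2\omega t)=0$, and your window argument pins down $\omega=\frac{3}{\sqrt{14}}$.

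Step 3 then goes through. Use $\Tmax\big(\frac{3}{\sqrt{14}}\big)\le\frac{\pi\sqrt{14}}{4}<\pi$ to force an interior minimum, which is needed because you only verified that $\frac{\pi\sqrt{14}}{4}$ is \emph{a} zero, not the first one. For the limit as $\omega\to1^-$, apply the implicit function theorem at $(\pi,1)$, where $\partial_tF=-1$.
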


    To study the injectivity of the exponential map as an optimal control problem, recall that sufficiently short geodesic segments are length minimizing. The \textbf{cut time} is the time at which a geodesic ceases to be minimizing. More precisely, for $g\in \SLg$ and $v\in T_g\SLg$, consider the geodesic $\gamma_{g,v}$ satisfying the initial conditions
    $$ \gamma_{g,v}(0) = g, \qquad \dot{\gamma}_{g,v}(0) = v.$$
    The cut time of $\gamma_{g,v}$ is defined by
    $$ \Tcut (g,v)\coloneqq \sup\{ t> 0\vcentcolon \gamma_{g,v}|_{[0,t]} \;\text{is minimizing}\}.$$
    The point $\gamma_{g,v}(\Tcut (g,v))$ is called the \textbf{cut point} along the geodesic $\gamma_{g,v}$.
    By convention, $\Tcut (g,v) =\infty$ if $\gamma_{g,v}$ has no cut points. The geodesic $\gamma_{g,v}$  is called \textbf{globally minimizing} if $\Tcut (g,v)=\Tcut (g,-v)= \infty$.

    Since we consider only geodesics starting at the identity, we write $\Tcut (\xi)\coloneqq \Tcut (\Id,\xi)$. The third fundamental set for our study is the \textbf{cut locus}, consisting of all cut points:
     $$ \mathcal{Cut}(\Id)\coloneqq \{ \Exp(\Tcut (\xi),\xi)\vcentcolon \xi\in \mathbb{S}^2(\sla),\;\; \text{and} \;\; \Tcut (\xi) <\infty\}. $$

     By $\SO$-symmetry and the transitivity of its action on $\Gamma_\omega$, the cut time is the same for every geodesic with initial velocity in $\Gamma_{\omega}$. We denote this time by $\Tcut (\omega)$.

     Maxwell points are important because a geodesic cannot remain minimizing beyond its first Maxwell point. In particular, the first Maxwell time corresponding to the $\Gamma_\omega$-symmetry provides an upper bound for the cut time:
     $$\Tcut (\omega) \leq \Tmax(\omega).$$
     In general, proving that the first Maxwell time coincides with the cut time is the most delicate part of determining the optimal synthesis, as it requires showing that the Maxwell points obtained from the chosen symmetry determine the cut locus. Establishing this fact typically requires a topological analysis of the image of the exponential map after removing the Maxwell stratum. We now make this approach precise.

      Since the construction in Eq. \eqref{eq:Gam-sym} is invariant under $\omega \to -\omega$, we need only consider \(\omega>0\). We therefore define the \textbf{Maxwell stratum}\footnote{ The terminology \enquote{Maxwell stratum} originates in singularity theory and was introduced by V. I. Arnold in \cite{arnold2013singularities}.} by 
    $$\MAS\coloneqq \bigcup_{\omega \in (0,1]} \Big\{\Exp(\Tmax(\omega),\xi)\vcentcolon\xi\in\Gamma_\omega\Big\}  . $$
    
    \begin{lemma}\label{lem:g-ome-sym}
    The Maxwell stratum is
    $$ \MAS = \{ g\in\SLg\vcentcolon g=g^\T,\quad g<0\};$$
     that is, $\MAS$ consists precisely of the symmetric negative-definite matrices in $\SLg$.\footnote{Under the identification of \(\mathrm{SL}_2(\mathbb R)\) with \(\mathrm{SU}(1,1)\) used by Podobryaev and Sachkov \cite{PodobryaevSachkov2018}, their coordinate \(q_3\) is proportional to the difference between the off-diagonal entries. Thus, \(q_3=0\) corresponds to the symmetric locus $g=g^T$, while the Maxwell stratum consists of its negative-definite matrices.}
    \end{lemma}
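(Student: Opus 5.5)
The plan is to work with the explicit form of the geodesics and to read both the Maxwell condition and the resulting matrices off it. For the metric $\langle\xi,\eta\rangle=\frac12\Tr(\xi\eta^\T)$, the Euler--Poincar\'e equation $\dot\xi=[\xi^\T,\xi]$ can be integrated, giving
$$\gamma(t,\xi)=\exp(t\xi^\T)\exp\bigl(t(\xi-\xi^\T)\bigr).$$
I take this formula as already derived where Theorems \ref{thm:classic} and \ref{thm:Max-time} are proved. For $\xi\in\Gamma_\omega$ I will write $\xi=S+\omega\tilde\xi$ with $S$ symmetric and traceless and $\|S\|^2=1-\omega^2$. Then $\xi^\T=S-\omega\tilde\xi$ and
$$\Exp(t,\xi)=\exp\bigl(t(S-\omega\tilde\xi)\bigr)\,R(2\omega t),$$
where $R(\theta)=\exp(\theta\tilde\xi)\in\SO$. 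The Maxwell pairs for the $\Gamma_\omega$-symmetry will be $\xi_1=\xi$ and $\xi_2$ the corresponding element of $\Gamma_{-\omega}$, namely $\xi_2=\xi^\T$ up to an $\SO$-conjugation that the proof of Theorem \ref{thm:Max-time} fixes.

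\textbf{Step 1: the stratum consists of symmetric matrices.} I will use the reflection symmetry behind Theorem \ref{thm:Max-time}. Transposition composed with inversion, $g\mapsto (g^{-1})^\T$, is an isometry fixing $\Id$ (up to an $\SO$ conjugation), and it exchanges the geodesics from $\Gamma_\omega$ and $\Gamma_{-\omega}$. Hence at a Maxwell time of this symmetry the common endpoint is a fixed point of the induced involution. Concretely, I will write $\exp(t(S-\omega\tilde\xi))=a(t)\Id+b(t)(S-\omega\tilde\xi)$, using $(S-\omega\tilde\xi)^2=(1-2\omega^2)\Id$, so that $a,b$ are $\cosh$, $\cos$, or linear according to the sign of $1-2\omega^2$. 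Multiplying out against $R(2\omega t)$ and computing the antisymmetric part of $\Exp(t,\xi)$, I expect it to be exactly a multiple of the transcendental expressions in Theorem \ref{thm:Max-time}. Vanishing of that expression at $t=\Tmax(\omega)$ then gives $g=g^\T$. This computation is routine but must be done in all three regimes of $\omega$.

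\textbf{Step 2: negative definiteness.} Since $g$ is symmetric with $\det g=1$, it is definite, and it is negative definite if and only if $\Tr g<0$. From the same expansion,
$$\Tr g=2\bigl(a\cos(2\omega t)-b\,\omega\sin(2\omega t)\bigr),$$
using $\Tr(S R)=0$ and $\Tr(\tilde\xi R(\theta))=-2\sin\theta$. I then substitute the Maxwell equation to eliminate one term, and use $2\omega\Tmax(\omega)\in(\pi,2\pi]$, which follows from Theorem \ref{thm:Max-time}. This should give $\cos(2\omega t)<0$ together with a sign condition forcing $\Tr g<0$. At $\omega=1$ we have $S=0$ and $g=R(\pi)=-\Id$.

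\textbf{Step 3: surjectivity onto negative-definite symmetric matrices.} Every such $g$ equals $Q\,\mathrm{diag}(-\lambda,-\lambda^{-1})Q^{-1}$ with $\lambda\ge1$ and $Q\in\SO$. Since $\Phi_Q$ preserves $\MAS$ and acts transitively on $\Gamma_\omega$, it suffices to show that the eigenvalue ratio of $\Exp(\Tmax(\omega),\xi)$ takes every value in $[1,\infty)$ as $\omega$ ranges over $(0,1]$. I will use continuity of $\Tmax$ on $(0,1]$, which follows from Proposition \ref{prp:f-max-t-reg} together with the endpoint $\omega=1$. The ratio equals $1$ at $\omega=1$. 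As $\omega\to0$, $\Tmax(\omega)>\pi/(2\omega)\to\infty$ while the symmetric part grows like $\cosh(t\sqrt{1-2\omega^2})$, so $|\Tr g|\to\infty$. The intermediate value theorem then yields every $\lambda\ge1$. Finally, the eigenvector directions are exhausted by conjugating with $\SO$, which rotates them through all angles.

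I expect the main obstacle to be Step 2: controlling the sign of the trace uniformly across the three regimes, especially near $\omega=1/\sqrt2$, where one must handle the interplay between the hyperbolic factor and $\sin(2\omega t)$ on $(\pi/(2\omega),\pi/\omega]$. My first attempt will be to rewrite $\Tr g$ at the Maxwell time as $2\cos(2\omega t)/a$ times a positive quantity, using $a^2-(1-2\omega^2)b^2=1$.
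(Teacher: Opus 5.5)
Your Step 1 is the paper's argument. With your $a,b$, the antisymmetric part of $g=\exp(t\xi^\T)R(2\omega t)$ is $\bigl(a\sin(2\omega t)-b\,\omega\cos(2\omega t)\bigr)\tilde\xi$, i.e.\ $F^i(t,\omega)\tilde\xi$, and this vanishes at $t=\Tmax(\omega)$.

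The heuristic you attach to it is wrong, though. The map $g\mapsto(g^{-1})^\T$ has differential $\xi\mapsto-\xi^\T$. This sends $S+\omega\tilde\xi$ to $-S+\omega\tilde\xi$, so it preserves $\Gamma_\omega$ rather than exchanging it with $\Gamma_{-\omega}$. Its fixed points are the elements of $\SO$, not the symmetric matrices. The isometry that does what you want is conjugation $\Phi_P$ by a reflection $P\in\mathrm{O}(2)\setminus\SO$. Then $\Ad_P$ maps $\Gamma_\omega$ onto $\Gamma_{-\omega}$, and every Maxwell partner of $\xi$ has the form $\Ad_P\xi$. Moreover, $PgP^{-1}=g$ forces $g\in\mathrm{span}\{\Id,P\}$, which consists of symmetric matrices. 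Since you fall back on the explicit computation, nothing breaks.

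Steps 2 and 3 genuinely differ from the paper, and both work after one correction.

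\textbf{Step 2.} The paper argues by connectedness: $\omega\mapsto g_\omega$ is continuous on $(0,1]$, $g_1=-\Id$, and the eigenvalues of a symmetric matrix of determinant one cannot cross zero. This relies on continuity of $\Tmax$ on all of $(0,1]$. Your pointwise trace computation needs no continuity, but you must fix a sign. Your own two trace identities give $\Tr g=2\bigl(a\cos(2\omega t)+b\,\omega\sin(2\omega t)\bigr)$; as a check, at $\omega=1$ you have $g=R(t)$, whose trace is $2\cos t$.
\begin{itemize}
\item Combined with the Maxwell relation $a\sin(2\omega t)=b\,\omega\cos(2\omega t)$, this gives $\Tr g=2b\,\omega/\sin(2\omega t)$.
\item This is negative for $\omega\in(0,1)$, because $2\omega\Tmax\in(\pi,2\pi)$ and $b>0$ in every regime. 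In the trigonometric regime, $b>0$ follows from $\Tmax\sqrt{2\omega^2-1}<\omega\Tmax<\pi$.
\item Do not aim for $\cos(2\omega t)<0$. At $\omega=3/\sqrt{14}$ one has $2\omega\Tmax=3\pi/2$ and $a=0$, so your quotient with $a$ in the denominator breaks down there. For $\omega\in(3/\sqrt{14},1)$ one even has $\cos(2\omega\Tmax)>0$.
\end{itemize}

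\textbf{Step 3.} The paper's proof of this lemma only establishes $\MAS\subseteq\{g=g^\T<0\}$. The reverse inclusion comes later, from the monotonicity of $q$ in Proposition \ref{prp:bij-mas-sym}. Your intermediate value argument supplies it directly from continuity of $\Tmax$ on $(0,1)$ and the two limits. For the limits, use $\|g_\omega\|^2=a^2+b^2$. It is at least $\cosh^2(\Tmax\sqrt{1-2\omega^2})$ for small $\omega$, and at most $1/(2\omega^2-1)$ for $\omega>1/\sqrt2$. So continuity at $\omega=1$ is not needed. You lose the uniqueness of $\omega$ that $q'<0$ provides. However, the set equality is all that the later identity $\Pm(\MAS)=\Sym\times\{-\Id\}$ uses.
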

    Motivated by the first Maxwell time, we define
    \begin{equation}\label{eq:max-stratum}
    \begin{split} 
        \MD & \coloneqq \{0_{2}\}\cup \Big(\bigcup_{\omega \in [-1,1]}  \Psi\big((0,\Tmax(\omega))\times \Gamma_\omega\big) \Big) \subseteq \sla ,\\
        \ND & \coloneqq \SLg \setminus \MAS.
    \end{split}
    \end{equation}
     Our third main result establishes that the exponential map is a global diffeomorphism when restricted to the domain determined by the first Maxwell time.

    \begin{Theorem}\label{thm:main-3}
    The restriction 
    $$\Exp\vcentcolon\MD \to \ND$$
    is a diffeomorphism. 
    \end{Theorem}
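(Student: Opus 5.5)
We will apply Hadamard's global diffeomorphism theorem, in the form: a proper local diffeomorphism between connected, simply connected manifolds (of the same dimension) is a diffeomorphism. The plan has four steps: open domain and codomain, local diffeomorphism, properness, and topology of the two sides.

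\emph{Step 1: openness.} First we check that $\MD$ is an open subset of $\sla$ and that $\ND$ is an open subset of $\SLg$. For $\ND$ this follows from Lemma \ref{lem:g-ome-sym}. The set $\MAS$ of symmetric negative-definite matrices of determinant one is closed in $\SLg$: a limit of such matrices is symmetric, negative semidefinite and has determinant one, hence is negative definite. For $\MD$ we write $\omega(\xi)=\langle \xi,\tilde{\xi}\rangle$, which is a smooth function on the unit sphere. Then
$$\MD=\{0_2\}\cup\{t\xi : 0<t<\Tmax(\omega(\xi))\}.$$
Openness follows from Proposition \ref{prp:f-max-t-reg}, which gives continuity of $\Tmax$ on $(0,1)$, together with evenness of $\Tmax$ and the limits $\Tmax\to\infty$ as $\omega\to 0$ and $\Tmax(\pm1)=\pi$. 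Continuity at $\omega=\pm1$ still has to be checked from the transcendental equation in Theorem \ref{thm:Max-time}. Since $\MD$ is star-shaped about $0_2$, it is contractible, hence connected and simply connected.

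\emph{Step 2: local diffeomorphism and image.} By Corollary \ref{cor:t-max-t-con}, $\Tmax(\omega)\le\Tcon(\omega)$, with equality only at $\omega=\pm1$. At $\omega=\pm1$ the interval $(0,\pi)$ still excludes the conjugate time. Hence $d\Exp$ is nonsingular on $\MD$ by Theorem \ref{thm:classic}, so $\Exp|_{\MD}$ is a local diffeomorphism. Next we show $\Exp(\MD)\subseteq\ND$, that is, no geodesic with $\xi\in\Gamma_\omega$ reaches a symmetric negative-definite matrix before $\Tmax(\omega)$. For this we use the explicit formula for $\gamma(t,\xi)$ coming from the Euler–Poincaré solution. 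The condition $g=g^\T$ is a condition on the antisymmetric part of $\gamma$, and it reduces to the transcendental equation of Theorem \ref{thm:Max-time}. Its smallest positive root is $\Tmax(\omega)$, so symmetry, and in particular negative-definiteness, cannot occur earlier. The exceptional case $\omega=0$, where $\Tmax=\infty$, is handled the same way: there symmetric points are positive definite, since these geodesics are one-parameter subgroups of symmetric matrices.

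\emph{Step 3: properness.} We must show that if $t_n\xi_n\in\MD$ leaves every compact subset of $\MD$, then $\Exp(t_n\xi_n)$ leaves every compact subset of $\ND$. Pass to a subsequence. Either $t_n\to\infty$, or $t_n\xi_n\to t\xi$ with $t=\Tmax(\omega(\xi))$ finite.
\begin{itemize}
\item In the second case, continuity gives $\Exp(t_n\xi_n)\to\Exp(\Tmax(\omega),\xi)\in\MAS$, which is not in $\ND$, as required.
\item In the first case, $t_n<\Tmax(\omega_n)$ and $\Tmax(\omega)\le\pi/|\omega|$ force $\omega_n\to0$. We then need the estimate that $\Exp(t\xi)$ is unbounded in $\SLg$ as $t\to\infty$, uniformly for small $|\omega|$ and $t<\Tmax(\omega)$. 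This should follow from the hyperbolic growth $\cosh(t\sqrt{1-2\omega^2})$ in the explicit formula.
\end{itemize}

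\emph{Step 4: topology of $\ND$, then conclusion.} We show $\ND$ is connected and simply connected. Use the polar (Cartan) decomposition $g=PQ$ with $P$ symmetric positive definite of determinant one, $P\cong\R^2$, and $Q\in\SO\cong S^1$. Then $\SLg\cong\R^2\times S^1$, and $\MAS=\{P\cdot(-\Id)\}$ is a section $\R^2\times\{\text{pt}\}$. Removing it leaves $\R^2\times(S^1\setminus\{\text{pt}\})\cong\R^3$, which is simply connected. Hadamard's theorem then yields that $\Exp\vcentcolon\MD\to\ND$ is a diffeomorphism.

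The main obstacle is properness near $\omega\to0$ with $t\to\infty$, together with continuity of $\Tmax$ at $\omega=\pm1$. I would handle these first using explicit norm estimates on the closed-form geodesics.
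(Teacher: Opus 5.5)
Your proposal is correct and follows essentially the same route as the paper. Both apply Hadamard's theorem with $\MD$ star-shaped, $d\Exp$ nonsingular by Corollary~\ref{cor:t-max-t-con}, properness split into the bounded case (limit on $\MAS$) and the case $t_n\to\infty$ (forcing $\omega_n\to 0$ and $\cosh$ growth of the norm), and $\ND\cong\R^3$ because $\MAS$ is the section $\Sym\times\{-\Id\}$ of the polar decomposition. Your explicit check that $\Exp(\MD)\subseteq\ND$ (via $\gamma-\gamma^\T=2F\xi_3$) and your remark that continuity of $\Tmax$ at $\omega=\pm1$ must still be verified are details the paper leaves implicit; note only that the section description uses the full equality in Lemma~\ref{lem:g-ome-sym}, and the paper actually obtains the inclusion $\supseteq$ (every symmetric negative-definite matrix is a first Maxwell point) from the monotonicity of $q$ in Proposition~\ref{prp:bij-mas-sym}, so that is where the real work in your Step 4 sits.
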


    Theorem \ref{thm:main-3}, together with the preceding results, yields the following consequences for the global optimality of the geodesics.
    \begin{Cor}\label{cor:cut-time}
    For $\omega \in[-1,1]$,
    $$ \Tcut (\omega) = \Tmax(\omega). $$
    Therefore,
    $$ \mathcal{Cut}(\Id) = \MAS.$$
    Moreover, every geodesic with initial velocity in $\Gamma_0$ is globally minimizing.
    \end{Cor}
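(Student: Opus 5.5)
We derive the statement from Theorem \ref{thm:main-3}, Lemma \ref{lem:g-ome-sym}, and the upper bound $\Tcut(\omega)\le\Tmax(\omega)$ already noted in the introduction. It remains to prove the reverse inequality $\Tcut(\omega)\ge\Tmax(\omega)$.

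Fix $\xi\in\Gamma_\omega$ and $0<t<\Tmax(\omega)$, so that $t\xi\in\MD$. Let $g=\Exp(t\xi)$ and $d=\mathrm{dist}(\Id,g)$. By Hopf--Rinow there is a minimizing geodesic from $\Id$ to $g$, namely $\Exp(s\eta)$ for $s\in[0,d]$ with $\eta\in\mathbb{S}^2(\sla)$, and $d\le t$. Let $\eta\in\Gamma_{\omega'}$.

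We first show $d<\Tmax(\omega')$. The inequality $d\le\Tcut(\omega')\le\Tmax(\omega')$ holds because the geodesic is minimizing up to time $d$. If equality held, then $g=\Exp(\Tmax(\omega'),\eta)$ would lie in $\MAS$ (using $\Tmax(-\omega')=\Tmax(\omega')$ and $\Ad$-invariance). This contradicts $g\in\ND$, which follows from Theorem \ref{thm:main-3} since $\Exp(\MD)=\ND$. Hence $d\eta\in\MD$, and $\Exp(d\eta)=g=\Exp(t\xi)$. Injectivity of $\Exp$ on $\MD$ from Theorem \ref{thm:main-3} gives $d\eta=t\xi$, so $d=t$. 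Therefore $\gamma|_{[0,t]}$ is minimizing for every $t<\Tmax(\omega)$. Then $\Tcut(\omega)\ge\Tmax(\omega)$, and equality follows.

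For $\omega=0$, the same argument with $\Tmax(0)=\infty$ shows that $\Exp(\cdot,\xi)$ is minimizing on every $[0,t]$, so $\Tcut(\xi)=\infty$. Global minimality also requires $\Tcut(\Id,-\xi)=\infty$. Since $\xi\in\Gamma_0$ is symmetric, $-\xi\in\Gamma_0$ as well, giving $\Tcut(-\xi)=\infty$ at $\Id$. A geodesic through a general point reduces to this case by left translation.

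For the cut locus, $\Tcut=\Tmax$ gives $\mathcal{Cut}(\Id)=\bigcup_{\omega}\{\Exp(\Tmax(\omega),\xi):\xi\in\Gamma_\omega\}$. By the symmetry $\omega\to-\omega$, the contributions from negative $\omega$ lie in the union over positive $\omega$; $\omega=0$ contributes nothing, as $\Tcut(0)=\infty$. This union is $\MAS$ by definition, and Lemma \ref{lem:g-ome-sym} identifies it as the set of symmetric negative-definite matrices.

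The only delicate step is ruling out $d=\Tmax(\omega')$ when the minimizer reaches $g$ exactly at its Maxwell time. The argument handles this through $g\notin\MAS$, so it relies on $\Exp(\MD)$ being exactly $\ND$, that is, on the surjectivity in Theorem \ref{thm:main-3}.
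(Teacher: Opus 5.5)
Your proof is correct and follows the paper's route. You combine the Maxwell-point upper bound $\Tcut(\omega)\le\Tmax(\omega)$ with Theorem~\ref{thm:main-3} to show that no geodesic loses optimality before $\Tmax(\omega)$, and you make explicit the step the paper states in one line: you place a Hopf--Rinow minimizer inside $\MD$ and identify it with $t\xi$ by injectivity.

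One small correction to your closing remark. Ruling out $d=\Tmax(\omega')$ uses only the inclusion $\Exp(\MD)\subseteq\ND$ (that is, $g=\Exp(t\xi)\notin\MAS$) together with injectivity; surjectivity of $\Exp\colon\MD\to\ND$ is never used.
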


    \begin{Cor}\label{cor:rad-in}
    The injectivity radius of $\SLg$ is
    $$\Tmax\bigg(\frac{3}{\sqrt{14}}\bigg) = \frac{\pi\sqrt{14}}{4}.$$
    \end{Cor}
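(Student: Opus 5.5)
We prove Corollary \ref{cor:rad-in} by reducing the injectivity radius at $\Id$ to a minimum of cut times, which is legitimate because the metric is left-invariant. Left translations are isometries, so the injectivity radius at every point $g\in\SLg$ equals the injectivity radius at $\Id$. That is,
$$\mathrm{inj}(\SLg)=\inf_{\xi\in\mathbb{S}^2(\sla)}\Tcut(\xi),$$
since the injectivity radius at a point of a complete manifold is the distance to its cut locus, i.e. the infimum of the cut times of unit-speed geodesics from that point. Every unit vector $\xi$ lies in exactly one $\Gamma_\omega$ with $\omega\in[-1,1]$: the quantity $\omega$ is the coefficient of the skew part of $\xi$ along $\tilde\xi$, and it lies in $[-1,1]$ because the splitting of $\sla$ into symmetric and skew parts is orthogonal for $\langle\cdot,\cdot\rangle$ and $\|\tilde\xi\|=1$. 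Hence
$$\mathrm{inj}(\SLg)=\inf_{\omega\in[-1,1]}\Tcut(\omega).$$

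By Corollary \ref{cor:cut-time}, $\Tcut(\omega)=\Tmax(\omega)$ for every $\omega\in[-1,1]$, so the problem becomes minimizing $\Tmax$ over $[-1,1]$.

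1. By the symmetry $\Tmax(\omega)=\Tmax(-\omega)$ noted after Theorem \ref{thm:Max-time}, it suffices to take $\omega\in[0,1]$.
2. At the endpoints, $\Tmax(0)=\infty$ by convention (indeed $\Gamma_0$-geodesics are globally minimizing), and $\Tmax(1)=\pi$ by Corollary \ref{cor:t-max-t-con}.
3. On $(0,1)$, Proposition \ref{prp:f-max-t-reg} gives the unique global minimum $\Tmax(3/\sqrt{14})=\pi\sqrt{14}/4$.
4. Compare with the endpoint value: $\sqrt{14}/4<1$ because $14<16$, so $\pi\sqrt{14}/4<\pi=\Tmax(1)$.

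Therefore the infimum over $[0,1]$, and hence over $[-1,1]$, is attained at $\omega=\pm 3/\sqrt{14}$ and equals $\pi\sqrt{14}/4$.

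The step needing the most care is the first reduction, because it relies on the cut time being independent of the point of $\Gamma_\omega$. This independence was established through the $\SO$-symmetry and transitivity, and it is also implicit in Corollary \ref{cor:cut-time}. One could worry that the infimum of cut times over the sphere might not be attained; here it is attained explicitly at $\omega=\pm 3/\sqrt{14}$. The reduction itself is standard Riemannian geometry, and everything else rests on earlier results of the paper.
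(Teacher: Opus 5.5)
Your proof is correct and follows the same route as the paper: left-invariance reduces the injectivity radius to $\min_{\omega\in[-1,1]}\Tcut(\omega)$, Corollary \ref{cor:cut-time} replaces $\Tcut$ by $\Tmax$, and Proposition \ref{prp:f-max-t-reg} supplies the minimum. Your explicit treatment of the symmetry $\omega\mapsto-\omega$ and of the endpoints $\Tmax(0)=\infty$ and $\Tmax(\pm1)=\pi>\pi\sqrt{14}/4$ fills in a step the paper leaves implicit, since Proposition \ref{prp:f-max-t-reg} is stated only on $(0,1)$.
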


    \begin{Cor}\label{cor:cut-luc-inter}
    The cut locus and the conjugate locus of $\Id$ intersect only at $-\Id$; that is, 
        $$ \mathcal{Cut}(\Id) \cap \mathcal{Con}(\Id) = \{ -\Id\}.$$
    \end{Cor}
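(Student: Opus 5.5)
We need to show that $\mathcal{Cut}(\Id)\cap\mathcal{Con}(\Id)=\{-\Id\}$. The plan is to intersect the two explicit descriptions already available. By Corollary \ref{cor:cut-time} and Lemma \ref{lem:g-ome-sym}, $\mathcal{Cut}(\Id)=\MAS$ is the set of symmetric negative-definite matrices in $\SLg$. By Theorem \ref{thm:classic}, $\mathcal{Con}(\Id)=\SO$.

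First we compute $\MAS\cap\SO$. A rotation $Q=\begin{pmatrix}\cos\theta&-\sin\theta\\ \sin\theta&\cos\theta\end{pmatrix}$ is symmetric exactly when $\sin\theta=0$, which gives $Q=\pm\Id$. Of these two, only $-\Id$ is negative definite. So $\MAS\cap\SO=\{-\Id\}$, and hence $\mathcal{Cut}(\Id)\cap\mathcal{Con}(\Id)\subseteq\{-\Id\}$.

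For the reverse inclusion, we exhibit $-\Id$ explicitly in both sets. We would check this directly, since it also confirms that the conjugate locus is being read correctly as a set of images.

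\textbf{$-\Id$ is a conjugate point.} Take $\omega=1$. Then $\Gamma_1$ consists of $\xi=\tilde\xi$ alone, since $\|\tilde\xi\|=1$ forces the symmetric part to vanish. This geodesic is the one-parameter subgroup $e^{t\tilde\xi}$, because $\mathfrak{so}(2)$ directions give geodesics of this metric; this follows from the Euler–Poincaré equation, as $\ad^*_{\tilde\xi}\tilde\xi=0$ for this metric. The normalization $\langle\tilde\xi,\tilde\xi\rangle=\tfrac12\Tr(\tilde\xi\tilde\xi^\T)=1$ makes $\tilde\xi$ the rotation generator with unit entries, so $e^{t\tilde\xi}$ is rotation by angle $t$. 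Theorem \ref{thm:classic} gives $\Tcon(1)=\pi$, so $\Exp(\pi,\tilde\xi)=-\Id\in\mathcal{Con}(\Id)$.

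\textbf{$-\Id$ is a cut point.} Corollary \ref{cor:t-max-t-con} gives $\Tmax(1)=\pi$, so $-\Id=\Exp(\Tmax(1),\tilde\xi)\in\MAS=\mathcal{Cut}(\Id)$. Since $-\Id$ is symmetric and negative definite, this is consistent with Lemma \ref{lem:g-ome-sym}.

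The only delicate point is the identification of $e^{\pi\tilde\xi}$ with $-\Id$. This relies on the normalization of $\tilde\xi$ and on the $\omega=1$ geodesic being the one-parameter subgroup, and both follow directly from the definitions and Theorem \ref{thm:classic}. Everything else reduces to the elementary intersection computed above.
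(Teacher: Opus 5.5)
Your proof is correct and is essentially the paper's argument. Both intersect $\MAS$ (the symmetric negative-definite matrices, Lemma~\ref{lem:g-ome-sym}) with $\mathcal{Con}(\Id)=\SO$ from Theorem~\ref{thm:classic}, note that $-\Id$ is the only symmetric negative-definite rotation, and obtain $-\Id=\Exp(\pi,\xi_{\pm1})\in\MAS$ from $\Tmax(\pm1)=\pi$. Your separate check that $-\Id$ is a conjugate point is harmless but redundant, since $-\Id\in\SO$.

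One caveat applies equally to the paper: $\mathcal{Con}(\Id)=\SO$ is only justified for \emph{first} conjugate points. For $\omega\in(\frac{1}{\sqrt2},1)$, the factor $\sin(\alpha t)-2(1-\omega^2)\alpha t\cos(\alpha t)$ in the determinant from Lemma~\ref{lem:con-time-2} also vanishes at some $\alpha t\in(\pi,\frac{3\pi}{2})$. This gives later conjugate points off $\SO$, and some of them are symmetric negative-definite, hence in $\MAS$. So both arguments really prove the statement with $\mathcal{Con}(\Id)$ read as the first conjugate locus.
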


    Our characterization of the cut time also yields an equi-optimality property of the geodesics, which we now describe. A geodesic $\gamma\vcentcolon\R \to \SLg$ is \textbf{equi-optimal} if its cut time is invariant under time translation. That is, $\Tcut (\gamma(t),\dot{\gamma}(t))$ is independent of $t\in \R$. 

    \begin{Cor}\label{cor:equi-opt}
    Every geodesic on $\SLg$ is equi-optimal.
    \end{Cor}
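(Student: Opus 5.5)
The plan is to combine Corollary \ref{cor:cut-time} with the left-invariance of the metric and the invariance of the metric under conjugation by $\SO$. Write an arbitrary geodesic as $\gamma(t)=g\,\Exp(t,\xi)$ with $\xi\in\Gamma_\omega$, $\|\xi\|=1$. By left invariance, $\Tcut(\gamma(s),\dot\gamma(s))=\Tcut(\Id,\eta(s))$, where $\eta(s)=\gamma(s)^{-1}\dot\gamma(s)\in\mathbb{S}^2(\sla)$ is the body velocity. By Corollary \ref{cor:cut-time}, this cut time equals $\Tmax(\omega(s))$, where $\omega(s)$ is the $\tilde\xi$-component of the skew-symmetric part of $\eta(s)$. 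So it suffices to show that $\omega(s)$ is constant in $s$.

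The key step is that $\eta(s)$ satisfies the Euler--Poincar\'e equation
$$\dot\eta=\ad_\eta^{*}\eta,$$
where $\ad^*$ is the adjoint of $\ad$ with respect to $\langle\cdot,\cdot\rangle$. For the inner product $\tfrac12\Tr(\xi\eta^\T)$ one computes $\ad_\eta^*\zeta=[\eta^\T,\zeta]$, so the equation reads $\dot\eta=[\eta^\T,\eta]$. We then check that the skew part $\tfrac12(\eta-\eta^\T)$ is conserved. Taking the transpose of the equation gives $\dot\eta^\T=[\eta^\T,\eta]^\T=[\eta^\T,\eta]$, since $[\eta^\T,\eta]^\T=\eta^\T\eta-\eta\eta^\T$. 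Hence $\tfrac{d}{dt}(\eta-\eta^\T)=0$, so $\omega(s)\equiv\omega$ for all $s\in\R$. Equivalently, this is Noether's theorem for the right $\SO$-action, which is by isometries because the inner product is $\Ad_{\SO}$-invariant. We also check that $\|\eta(s)\|=1$ is preserved, as it must be for a geodesic.

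It follows that $\Tcut(\gamma(s),\dot\gamma(s))=\Tmax(\omega)$ for every $s\in\R$. This holds also for $\omega=0$, where the cut time is $\infty$, and for $\omega=\pm1$, by the convention $\Tmax(-\omega)=\Tmax(\omega)$ and Corollary \ref{cor:cut-time}. It remains to be careful about negative times, i.e.\ about $-\dot\gamma$. Reversing the direction sends $\eta$ to $-\eta$, which lies in $\Gamma_{-\omega}$, and $\Tmax(-\omega)=\Tmax(\omega)$, so nothing changes. The main obstacle is verifying the form of $\ad^*$ and the sign conventions in the Euler--Poincar\'e equation for left-invariant metrics. Conservation of the skew part does not depend on the sign, however, since $[\eta^\T,\eta]$ is symmetric either way.
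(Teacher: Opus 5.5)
Your proof is correct and follows the paper's argument: left-translate the geodesic at time $s$ back to the identity, observe that the body velocity stays in the same $\Gamma_\omega$, and apply Corollary \ref{cor:cut-time}. The only difference is that you prove the conservation of $\omega$ explicitly, by showing that $\dot\eta=[\eta^\T,\eta]$ is symmetric, whereas the paper asserts $\dot\gamma^1_\omega(0)\in\Gamma_\omega$ using the reduced equations ($\dot u_3=0$); your version just makes that step explicit.
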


    Our approach to studying the injectivity of the exponential map is based on optimal synthesis and proceeds through the following steps:
    \begin{enumerate}
        \item Integrate the geodesic equations.
        \item Compute the conjugate times.
        \item Study the symmetries of the exponential map.
        \item Determine the first Maxwell time and prove that the first conjugate time is greater than or equal to the first Maxwell time.
        \item Apply Hadamard's global diffeomorphism theorem to the restriction $\Exp\vcentcolon\MD \to \ND$.
    \end{enumerate}
    
    Our approach treats the exponential map as an optimal control problem and uses optimal synthesis to prove Theorem \ref{thm:main-3}. The paper is organized as follows. In Section \ref{sec:loc}, we integrate the geodesic equations using the Euler–Poincar\'e equation and compute the first conjugate time and the conjugate locus. In Section \ref{sec:glo}, we analyze the symmetries of the exponential map, determine the first Maxwell time, and compare it with the first conjugate time. 
    Finally, in Section \ref{sec:top}, we study the topology of the Maxwell stratum and apply Hadamard’s global diffeomorphism theorem to establish Theorem \ref{thm:main-3}. This allows us to characterize the cut time and cut locus and compute the injectivity radius.

 \section{ Local Study}\label{sec:loc}

 In this section, we study the geodesic flow on $\SLg$ and determine the conjugate points along its geodesics. In Subsection \ref{subsec:SL2}, we define $\SLg$ and describe its left-invariant structure. In Subsection \ref{subsubsec:eul-poin}, we present the general theory of the Euler–Poincar\'e equation and its linearization. We then specialize these equations to $\SLg$ and obtain explicit formulas for the geodesics and Jacobi fields. In Subsection \ref{subsec:con-poin}, we use these formulas to determine the conjugate time. We conclude by characterizing the conjugate locus and proving Theorem \ref{thm:classic}.

\subsection{The Special Linear Group}\label{subsec:SL2}

   The special linear group $\SLg$ is defined as the Lie subgroup of the general linear group $\mathrm{GL}_2(\mathbb{R})$ given by
\[\SLg \coloneqq\{g\in \mathrm{GL}_2(\mathbb{R}) \;\vcentcolon\;\det(g)=1\}.\]
An important subgroup of $\SLg$ is the special orthogonal group $\mathrm{SO}(2)$, consisting of rotations. This subgroup is defined by
\[  \mathrm{SO}(2) \coloneqq \{ g \in \SLg \vcentcolon g^\T g = \Id\}. \]

We denote the Lie algebra of $\SLg$ by $\sla$. Since $\det(\exp(t\xi)) = e^{t\Tr(\xi)}$, the curve $\exp(t\xi)$ belongs to $\SLg$ for every $t\in \R$ if and only if $\Tr(\xi) =0$. Hence
$$ \sla \coloneqq \{ \xi \in M_2(\mathbb{R}) \vcentcolon  \Tr(\xi) = 0 \}.  $$
The Lie bracket is the matrix commutator 
   $$[\xi,\eta] = \xi \eta-\eta \xi, \qquad \xi,\eta \in \sla.$$
For further details on the preceding discussion, see \cite[Chapter 5]{stillwell-naive-lie}.

As mentioned earlier, the adjoint action of $\SO$ preserves the inner product on $\sla$. Indeed, using the cyclic property of the trace, one obtains
\begin{equation*}
    \langle \xi,\eta \rangle = \langle \Ad_Q(\xi) ,\Ad_Q(\eta) \rangle,\;\text{ for all }  \xi,\eta \in \sla \text{ and } Q \in  \mathrm{SO}(2). 
\end{equation*}
Since the metric is left-invariant and the differential of  $\Phi_Q$ at the identity is $\Ad_Q$, it follows that the conjugation map
$$\Phi_Q\vcentcolon \SLg \to \SLg, \qquad \Phi_Q(A) = Q AQ^{-1}$$
s an isometry for every  $Q\in \mathrm{SO}(2)$. As explained earlier, we investigate the minimality properties of geodesics in $\SLg$ by considering a representative from each orbit of the $\SO$-action. To describe this strategy in more detail, we first introduce the following orthonormal basis for $\sla$:
\begin{equation}\label{eq:Lie-alg-basis}
\xi_1=\left[\begin{array}{cc}
    1 & 0 \\
    0 & -1
\end{array}\right], \qquad 
\xi_2=\left[\begin{array}{cc}
    0 & 1 \\
    1 & 0
\end{array}\right]
 , \qquad
\xi_3=\left[\begin{array}{cc}
    0 & -1 \\
    1 & 0
\end{array}\right]
.    
\end{equation}
The vectors $\xi_1$ and $\xi_2$ are symmetric matrices, whereas $\xi_3$ is skew-symmetric. In addition, $\xi_3$ spans the Lie algebra of the subgroup $\mathrm{SO}(2)$; see \cite[Chapter 5]{stillwell-naive-lie} for more details. 

The elements of the basis satisfy the following Lie bracket relations:
\begin{equation}\label{eq:lie-al-rel}
 [\xi_1,\xi_2]=-2\xi_3 , \qquad [\xi_1,\xi_3]= -2\xi_2  , \qquad [\xi_2,\xi_3]=2\xi_1 .     
\end{equation}

Now, recall the definition of $\Gamma_\omega$ in terms of this basis:
\begin{equation*}
    \Gamma_\omega  \coloneqq \left\{ \xi \in \mathbb{S}^2(\sla) \vcentcolon \omega\xi_3 = \frac{1}{2}(\xi-\xi^\T)  \right\}.
\end{equation*}
The following proposition states that the action of $\mathrm{SO}(2)$ on $\Gamma_\omega$ is transitive. 
\begin{Prop}\label{prp:iso-action}
    For $\omega\in [-1,1]$, the set \(\Gamma_\omega\) is invariant under the adjoint action of $\SO$. Moreover, this action is transitive; that is, for aby $\xi,\eta \in \Gamma_\omega$, there exists $Q\in \mathrm{SO}(2)$ such that $\Ad_{Q}(\xi) = \eta$.   
\end{Prop}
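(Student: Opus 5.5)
Write $\xi = a\xi_1 + b\xi_2 + c\xi_3$ in the orthonormal basis \eqref{eq:Lie-alg-basis}. Since $\xi_1,\xi_2$ are symmetric and $\xi_3$ is skew-symmetric, we have $\frac12(\xi-\xi^\T) = c\,\xi_3$. Hence
$$\Gamma_\omega = \{a\xi_1+b\xi_2+\omega\xi_3 \vcentcolon a^2+b^2 = 1-\omega^2\},$$
which is a circle of radius $\sqrt{1-\omega^2}$ in the plane $c=\omega$ (a single point when $\omega=\pm1$). The plan is to compute the adjoint action of $Q_\theta = \exp(\theta\xi_3) = \begin{bmatrix}\cos\theta & -\sin\theta\\ \sin\theta & \cos\theta\end{bmatrix}$ explicitly in these coordinates.

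For invariance, observe that $\Ad_Q$ preserves transposition whenever $Q\in\SO$: indeed $(Q\xi Q^{-1})^\T = Q\xi^\T Q^{-1}$ because $Q^{-1}=Q^\T$. Therefore $\Ad_Q$ commutes with taking the skew part, so
$$\tfrac12\bigl(\Ad_Q\xi - (\Ad_Q\xi)^\T\bigr) = \Ad_Q\bigl(\omega\xi_3\bigr) = \omega\xi_3,$$
since $Q$ commutes with $\xi_3$, both lying in the abelian group $\SO$ and its Lie algebra. The norm is preserved because $\Ad_Q$ is an isometry, as noted in Subsection \ref{subsec:SL2}. Hence $\Ad_Q(\Gamma_\omega)\subseteq\Gamma_\omega$.

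For transitivity, compute $\Ad_{Q_\theta}$ on the symmetric part. Using $\frac{d}{d\theta}\Ad_{Q_\theta}\eta = \Ad_{Q_\theta}[\xi_3,\eta]$ together with the brackets \eqref{eq:lie-al-rel}, namely $[\xi_3,\xi_1]=2\xi_2$ and $[\xi_3,\xi_2]=-2\xi_1$, we see that $\Ad_{Q_\theta}$ acts on $\mathrm{span}\{\xi_1,\xi_2\}$ as rotation by angle $2\theta$, and fixes $\xi_3$. Thus $\Ad_{Q_\theta}$ rotates the circle $\Gamma_\omega$ by $2\theta$. Given $\xi,\eta\in\Gamma_\omega$ with $|\omega|<1$, both $(a,b)$-components lie on the same circle of radius $\sqrt{1-\omega^2}$, so they differ by some rotation angle $\alpha$; choosing $\theta=\alpha/2$ gives $\Ad_{Q_\theta}\xi=\eta$. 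For $\omega=\pm1$ the set $\Gamma_\omega=\{\pm\xi_3\}$ is a single point, so transitivity is trivial.

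The only step requiring care is verifying the rotation formula (the sign and the factor $2$). This amounts to a direct $2\times2$ matrix multiplication, or equivalently to solving the linear ODE above, and is routine.
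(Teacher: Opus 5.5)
Your proof is correct and follows essentially the same route as the paper. Both rest on the computation that $\Ad_{\exp(\theta\xi_3)}$ rotates $\mathrm{span}\{\xi_1,\xi_2\}$ by $2\theta$ and fixes $\xi_3$, which the paper writes for $\exp(-\theta\xi_3)$ with angle $-2\theta$ via $\Ad_{\exp(\xi)}=e^{\ad_\xi}$; your separate invariance argument, using that conjugation by an orthogonal matrix commutes with transposition, is only a coordinate-free variant of reading invariance off that rotation formula.
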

\begin{proof}
  The  Lie bracket relations in Eq. \eqref{eq:lie-al-rel}, together with the identity
  \begin{equation}\label{eq:exp-equi}
       \Ad_{\exp(\xi)} = e^{\ad_\xi},
   \end{equation}
  imply that
  \begin{equation*}
      \begin{split}
         \Ad_{\exp(-\theta \xi_3)}(\xi_1) & = \cos(2\theta)\xi_1-\sin(2\theta)\xi_2, \\ 
         \Ad_{\exp(-\theta \xi_3)}(\xi_2) & = \sin(2\theta)\xi_1 + \cos(2\theta)\xi_2, \\
         \Ad_{\exp(-\theta \xi_3)}(\xi_3) & = \xi_3 .
      \end{split}
  \end{equation*}
  These equations show that $\Ad_{\exp(-\theta \xi_3)}$ acts as a rotation through angle $-2\theta$ on $\mathrm{span}\{\xi_1,\xi_2\}$ and fixes $\xi_3$. Consequently, $\Gamma_\omega$ is invariant under the adjoint action of $\SO$. Since every $\xi \in \Gamma_\omega$ has $\xi_3$-component equal to $\omega$ and symmetric component of norm $\sqrt{1-\omega^2}$, and since $\Ad_{\exp(-\theta \xi_3)}$ realizes every rotation of  $\mathrm{span}\{\xi_1,\xi_2\}$, the action is transitively on $\Gamma_{\omega}$. 
\end{proof}

The following remark will play a fundamental role in the next section. 
\begin{Remark}\label{rem:Gam-par}
    Geometrically, $\Gamma_\omega$ is a circle of latitude on the sphere $\mathbb{S}^2(\sla)$. More precisely, for $|\omega|<1$, $\Gamma_\omega$ is a circle of radius $\sqrt{1-\omega^2}$ and height $\omega$; for $\omega = \pm 1,$ it reduces to a single point. Thus, we parametrize $\Gamma_\omega$ by an angle $\phi \in [0,2\pi)$ as follows:
    $$ \Gamma_\omega = \{ \xi = \sqrt{1-\omega^2}(\cos\phi \xi_1+\sin\phi \xi_2) + \omega \xi_3 \vcentcolon \phi\in [0,2\pi) \}. $$
    Under this parametrization, the determinant of $\xi$ is given by
    $$ \det(\xi) = 2\omega^2-1. $$
    Therefore, the sign of $\det(\xi)$ is  characterized as follows:
    \begin{equation*}
        \begin{split}
            \det(\xi) > 0 & \iff  \omega \in \Big[-1,-\frac{1}{\sqrt{2}}\Big)\cup \Big(\frac{1}{\sqrt{2}},1\Big],\\
            \det(\xi) = 0 &  \iff   \omega = \pm \frac{1}{\sqrt{2}}, \\
            \det(\xi) < 0 &  \iff \omega \in \Big(-\frac{1}{\sqrt{2}},\frac{1}{\sqrt{2}}\Big).\\
        \end{split}
    \end{equation*}
\end{Remark}

\subsubsection{The Euler-Poincar\'e Equation}\label{subsubsec:eul-poin}
In this subsection, we briefly introduce the Euler-Poincar\'e equation; see \cite[Appendix 2]{arnold1989mathematical} for further details. Consider a Lie group $\mathbb{G}$ and a smooth curve $\gamma\vcentcolon\R \to \mathbb{G}$.  The velocity vector $\dot{\gamma}(t)$ lies in the tangent space $T_{\gamma(t)}\mathbb{G}$. To compare velocities at different points of the group, we use the differential of left translation by $\gamma^{-1}(t)$ to transport $\dot{\gamma}(t)$ to the identity. The resulting Lie-algebra-valued curve
$$ u(t) = (dL_{\gamma^{-1}(t)})_{\gamma(t)} \dot{\gamma}(t) \in \mathfrak{g}$$
is called the \textbf{left-trivialized velocity}. For a matrix Lie group, this expression reduces to
$$ u(t) = \gamma^{-1}(t) \dot{\gamma}(t) \in \mathfrak{g}.$$
Let $\mathbb{G}$ be a Lie group equipped with the left-invariant Riemannian metric. The corresponding reduced kinetic energy Lagrangian $\ell\vcentcolon\mathfrak{g}\to \R$ is given by 
$$ \ell(\xi) \coloneqq \frac{1}{2} \langle \xi,\xi \rangle. $$
The geodesic equation is equivalent to the Euler-Poincar\'e equation
$$ \dot{u}(t) = \ad^*_u{\frac{\partial \ell}{\partial u}},$$
where $\frac{\partial \ell}{\partial u} \in \Lag^*$ denotes the differential of the reduced Lagrangian, and $\ad^*$ is defined by 
$$ \langle\ad^*_um,v\rangle = \langle m,[u,v]\rangle, \qquad m\in\Lag^*,\;\;v\in\Lag.$$
Thus, as mentioned earlier, the Euler–-Poincar\'e equation reduces the study of geodesics on the Lie group to an evolution equation on its Lie algebra. Now, recall that to recover the geodesic $\gamma(t)$ from a solution to the Euler-Poincar\'e equation, we set up the initial value problem
$$\dot{\gamma}(t) = (dL_{\gamma^{-1}(t)})_{\gamma(t)}u(t), \quad \gamma(0) = \Id. $$
For matrix Lie groups, this becomes
$$ \dot{\gamma}(t) = \gamma(t) u(t), \quad \gamma(0) = \Id. $$

This formulation also allows us to interpret the geodesic problem as an optimal control problem. Given $g_0,g_1 \in \mathbb{G}$, consider the control system
$$ \dot{\gamma}(t) = \gamma(t) u(t),\qquad u(t) \in \Lag,$$
with $\gamma(0) =g_0$ and $\gamma(T) = g_1$, and the energy functional 
$$J(u) = \frac{1}{2}\int_0^T \langle u(t),u(t)\rangle dt .$$
For fozed $T>0$, the trajectories that minimize \(J\) are constant-speed minimizing geodesics of the left-invariant Riemannian metric \cite{agrachev2013control}. Thus, the Euler-Poincar\'e equation determines the evolution of the reduced velocity \(u(t)\), while the reconstruction equation determines the corresponding geodesic on $\mathbb{G}$.

In our case, we write
\begin{equation*}
    u(t) = u_1(t)\xi_1+u_2(t)\xi_2+u_3(t)\xi_3,
\end{equation*}
then the reduced  kinetic energy Lagrangian $\ell\vcentcolon\sla \to \R$  has the form
$$\ell(u) = \frac{1}{2}\big(u_1^2+u_2^2+u_3^2\big).$$
The system of ODEs governing the reduced geodesic equations on $\sla$ is 
\begin{equation}
    \begin{split}
        \dot{u}_1(t) & = 4 u_3u_2,\\
        \dot{u}_2(t) & = -4 u_3u_1,\\
        \dot{u}_3(t) & = 0.\\
    \end{split}
\end{equation}
Thus, by imposing the energy condition $\ell(u) = \frac{1}{2}$, which implies that the geodesics are parametrized by arc length, we obtain the general solution
\begin{equation*}
    u(t) = \sqrt{1-\omega^2}\big( \cos(4\omega t+ \phi_0) \xi_1 - \sin(4\omega t+ \phi_0) \xi_2 \big) + \omega \xi_3,\;\; \omega \in [-1,1], \;\; \phi_0\in [0,2\pi). 
\end{equation*}
 Thus, the trajectory of $u(t)$ lies on the circle of latitude $\Gamma_\omega$ defined in Remark \ref{rem:Gam-par}. The reduced dynamics are stationary for $\omega\in\{\pm 1,0\}$ and describe a uniform circular motion on $\omega \in (-1,0)\cup (0,1)$. Moreover, the adjoint action of $\mathrm{SO}(2)$ preserves the reduced kinetic energy and maps solutions of the Euler-Poincar\'e equation to solutions. In particular, 
$$ \Ad_{\exp(-\theta\xi_3)}(u(t)) = \sqrt{1-\omega^2}\big( \cos(4\omega t+ \phi_0+2\theta) \xi_1 - \sin(4\omega t+ \phi_0+2\theta) \xi_2 \big) + \omega \xi_3. $$

To simplify our proofs, we choose the element
\begin{equation}\label{eq:xi-ome}
    \xi_\omega \coloneqq \sqrt{1-\omega^2} \xi_1 + \omega \xi_3,
\end{equation}
as a representative of $\Gamma_\omega$, and we denote by $u_\omega(t)$ the solution of the Euler-Poincar\'e equation with initial condition $u_\omega(0) = \xi_\omega$; that is
\begin{equation}\label{eq:geo-eq-sl2}
    u_\omega(t) = \sqrt{1-\omega^2}\big( \cos(4\omega t) \xi_1 - \sin(4\omega t) \xi_2 \big) + \omega \xi_3.
\end{equation}
Thus, the strategy is to prove our main results for geodesics $\gamma_\omega(t)$ with initial conditions
\begin{equation}\label{eq:def-gam-ome}
    \gamma_\omega(0) = \Id,\quad\text{and}\quad \dot{\gamma}_\omega(0) = \xi_\omega.
\end{equation}
 We then use the $\SO$-isometries to extend these optimality properties to all geodesics with initial velocities in $\Gamma_\omega$.

With this discussion in mind, we now present the explicit integration of the above equation, which will play an essential role in the rest of this work.

\begin{lemma}\label{lem:expl-form}
    Let $\xi \in \sla$. Then the geodesic $\gamma(t)$ with the initial conditions $\gamma(0) = \Id$ and $\dot{\gamma}(0) = \xi$ has the explicit form
    \begin{equation*}
        \gamma(t) = \exp(t\xi^\T)\exp(t(\xi-\xi^\T)). 
    \end{equation*}
    Furthermore, if $\xi \in \mathbb{S}^2(\sla)$, using the notation introduced in Remark \ref{rem:Gam-par}, then the following statements hold:
    \begin{itemize}
        \item If  $\;\det (\xi)>0$, then 
        \begin{equation*}
        \gamma(t) = \left(\cos(t\sqrt{2\omega^2-1} ) \Id + \frac{\sin(t \sqrt{2\omega^2-1})}{\sqrt{2\omega^2-1}} \xi^\T \right)\exp(t(\xi-\xi^\T)). 
    \end{equation*}
        \item If  $\;\det (\xi) = 0$, then 
        \begin{equation*}
        \gamma(t) = ( \Id + t \xi^\T)\exp(t(\xi-\xi^\T)). 
        \end{equation*}
         \item If  $\;\det (\xi)<0$, then 
        \begin{equation*}
        \gamma(t) = \left(\cosh(t \sqrt{1-2\omega^2}) \Id + \frac{\sinh(t\sqrt{1-2\omega^2})}{\sqrt{1-2\omega^2}} \xi^\T \right)\exp(t(\xi-\xi^\T)). 
        \end{equation*}
    \end{itemize}
\end{lemma}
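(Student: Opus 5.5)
We verify the proposed formula directly against the Euler–Poincaré equation together with the reconstruction equation $\dot\gamma=\gamma u$. Set
\[
A(t)=\exp(t\xi^\T), \qquad B(t)=\exp(t(\xi-\xi^\T)), \qquad \gamma=AB .
\]
Then
\[
\gamma^{-1}\dot\gamma = B^{-1}\xi^\T B + (\xi-\xi^\T) = \Ad_{B^{-1}}(\xi^\T)+\xi-\xi^\T .
\]
At $t=0$ this equals $\xi$, and clearly $\gamma(0)=\Id$. Write $\xi=s+\omega\xi_3$ with $s$ symmetric, so $\xi-\xi^\T=2\omega\xi_3$ and $\xi^\T=s-\omega\xi_3$. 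Since $B(t)=\exp(2\omega t\xi_3)$ commutes with $\xi_3$, we get
\[
u(t):=\gamma^{-1}\dot\gamma=\Ad_{\exp(-2\omega t\xi_3)}(s)+\omega\xi_3 .
\]
By the rotation formulas in the proof of Proposition \ref{prp:iso-action} (with $\theta=2\omega t$), $\Ad_{\exp(-2\omega t\xi_3)}$ rotates $\mathrm{span}\{\xi_1,\xi_2\}$ by angle $-4\omega t$. Hence $u(t)$ is exactly the uniform circular motion on the latitude circle found above, with initial condition $u(0)=\xi$. Because $u$ is the unique solution of the Euler–Poincaré system with $u(0)=\xi$, and $\gamma$ satisfies the reconstruction equation with $\gamma(0)=\Id$, $\gamma$ is the geodesic by uniqueness of ODE solutions. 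Equivalently, one can differentiate $u(t)$ and check $\dot u_1=4u_3u_2$, $\dot u_2=-4u_3u_1$ using $[\xi_3,\xi_1]=2\xi_2$ and $[\xi_3,\xi_2]=-2\xi_1$. The argument holds for any $\xi\in\sla$, since the equations are homogeneous and nothing requires unit length.

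For the three explicit cases, apply Cayley–Hamilton to $\xi^\T$. It is traceless with $\det(\xi^\T)=\det\xi$, so
\[
(\xi^\T)^2=-\det(\xi)\,\Id .
\]
For unit $\xi$, Remark \ref{rem:Gam-par} gives $\det\xi=2\omega^2-1$. Summing the exponential series, splitting into even and odd powers, yields:
\begin{itemize}
\item if $\det\xi>0$: $\exp(t\xi^\T)=\cos(t\sqrt{2\omega^2-1})\,\Id+\dfrac{\sin(t\sqrt{2\omega^2-1})}{\sqrt{2\omega^2-1}}\,\xi^\T$;
\item if $\det\xi=0$: $\exp(t\xi^\T)=\Id+t\xi^\T$;
\item if $\det\xi<0$: the analogous formula with $\cosh$, $\sinh$ and $\sqrt{1-2\omega^2}$.
\end{itemize}
Substituting these into $\gamma=AB$ gives the stated forms.

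The main obstacle is the bookkeeping of sign conventions: confirming that the rotation direction of $\Ad_{B^{-1}}$ matches the $-\sin(4\omega t)\xi_2$ term in Eq. \eqref{eq:geo-eq-sl2}. Uniqueness of the Euler–Poincaré solution sidesteps most of this. It suffices to check that $u(t)$ satisfies the ODE system, which follows from $\dot u=[u,\omega\xi_3]$-type identities derived from the bracket relations \eqref{eq:lie-al-rel}.
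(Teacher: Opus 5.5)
Your proposal is correct and follows the paper's proof. Both compute the left-trivialized velocity $\gamma^{-1}\dot\gamma=\Ad_{B^{-1}}(\xi^\T)+\xi-\xi^\T$, identify it with the Euler--Poincar\'e solution using the $\SO$ rotation formulas, and then use Cayley--Hamilton, $(\xi^\T)^2=-\det(\xi)\Id$, for the three closed forms. The only difference is that you check arbitrary $\xi$ directly, whereas the paper first reduces to the representative $\xi_\omega$ via $\SO$-equivariance and homogeneity, which is a harmless simplification.
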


\begin{proof}
     Let $\gamma\vcentcolon\R \to \SLg$ be the curve given by the explicit form. By $\SO$-symmetry and the homogeneity of the geodesic equation, it suffices to verify the geodesic equation for the representative $\gamma_\omega(t)$. Indeed, isometries map geodesics to geodesics, and rescaling the initial velocity corresponds to an affine reparametrization. We therefore verify that the left-trivialized velocity of
     $\gamma_\omega(t)$ satisfies Eq. \eqref{eq:geo-eq-sl2}. Then, we use the product rule to compute $\dot{\gamma}_\omega(t)$ and find that
     \begin{align*}
        \dot{\gamma}_\omega(t)&=\gamma_\omega(t)(\Ad_{\exp(-t(\xi_\omega-\xi_\omega)^\T)}(\xi_\omega)^\T)+\xi_\omega-\xi_\omega^\T)\\
        &=\gamma_\omega(t)(\sqrt{1-\omega^2 }\Ad_{\exp(-2t\omega \xi_3)}(\xi_1)+\omega \xi_3)\\
        &=\gamma_\omega(t)\big(\sqrt{1-\omega^2 }\big( \cos(4\omega t) \xi_1 -\sin(4\omega t) \xi_2 \big)+\omega \xi_3\big). 
    \end{align*}
    So $\gamma^{-1}_\omega(t)\dot{\gamma}_\omega(t) = u_\omega(t)$, where $u_\omega(t)$ is the solution of the Euler-Poincar\'e equation given by Eq. \eqref{eq:geo-eq-sl2}. In particular, $u_\omega(0) =\xi_\omega$ as required.

    By the Cayley–Hamilton theorem, every $\xi \in \sla$ satisfies $\xi^2 = -\det(\xi) \Id$. It follows that
    \begin{align*}
        \exp(t\xi^\T) &= \sum_{i=0}^\infty \frac{(-\det(\xi))^i t^{2i}}{(2i)!} \Id + \sum_{i=0}^\infty \frac{(-\det(\xi))^i t^{2i+1}}{(2i+1)!} \xi^\T. 
    \end{align*}
    Therefore, Remark \ref{rem:Gam-par} and the Taylor series for the trigonometric and hyperbolic functions yield the desired result. 
\end{proof}

\begin{Remark}\label{rem:t-rev-prp}
    Since the Riemannian geodesic equation is invariant under time reversal, if $\gamma(t)$ is a geodesic, then $\gamma(-t)$ is also a geodesic. Let $\gamma_\omega^-$ denote the geodesic with initial conditions
    $$ \gamma_\omega^-(0) = \Id, \quad\text{and} \quad \dot{\gamma}^-_\omega(0) = - \xi_\omega. $$
    By time reversibility and uniqueness of geodesics,
    $$ \gamma_\omega(-t) = \gamma_\omega^-(t).$$
    Consequently, by the $\SO$-symmetry and time reversibility, it suffices to study the optimal properties of $\gamma_\omega$ for $\omega\in [0,1]$.  
\end{Remark}

\subsubsection{The Linearized Euler-Poincar\'e Equation}

The Jacobi equation for Lie groups with left-invariant metrics was studied by Herring and O'Sullivan \cite{herring1979jacobi}; see also \cite{LeBrigantLichtenfelzPreston} for a modern treatment.

The classical method for finding Jacobi fields is to solve the second-order ODE known as the Jacobi equation; see \cite{do2016differential} for more details.  For a left-invariant Riemannian metric on a Lie group, the Jacobi equation can be expressed as a coupled system of equations on the Lie algebra, known as the linearized Euler-Poincar\'e equation system. This system describes the variations of both the reconstructed curve and its left-trivialized velocity. Thus, on a matrix Lie group, a Jacobi field along $\gamma(t)$ has the form
$$ W(t) = \gamma(t)\eta(t),  $$
where $(\eta(t),\zeta(t))$ satisfy the system of ODEs
$$ \dot{\eta} (t) =- \ad_{u(t)}(\eta(t)) + \zeta(t),\qquad\dot{\zeta}(t) = \ad_{u(t)}^\ast(\zeta(t)) + \ad_{\zeta(t)}^\ast (u(t)).$$
Here, we identify $\Lag$ with its dual $\Lag^*$ through the inner product induced by the Riemannian metric.

In our case, substituting the expression for $u_\omega(t)$ from Eq. \eqref{eq:geo-eq-sl2} into the linearized Euler-Poincar\'e equation yields the following coupled system:
\begin{equation}\label{eq:lin-eul-poi-1}
    \begin{split}
       \dot{\eta}_1(t) &= \zeta_1 + 2 \omega\eta_2+ 2\sqrt{1-\omega^2}\sin(4\omega t)\eta_3,\\
       \dot{\eta}_2(t) & = \zeta_2 -2 \omega\eta_1 + 2\sqrt{1-\omega^2}\cos(4\omega t)\eta_3 ,\\
      \dot{\eta}_3(t) & = \zeta_3 + 2\sqrt{1-\omega^2}\big(\cos(4\omega t)\eta_2+\sin(4\omega t)\eta_1\big), 
    \end{split}
\end{equation}
        
and 
\begin{equation}\label{eq:lin-eul-poi-2}
    \begin{split}
       \dot{\zeta}_1(t)&=4\omega \zeta_2 - 4\sqrt{1-\omega^2}\sin(4 \omega t)\zeta_3,  \\
       \dot{\zeta}_2(t) &=- 4\omega \zeta_1 - 4\sqrt{1-\omega^2}\cos(4\omega t)\zeta_3, \\
       \dot{\zeta}_3(t) &=0. 
    \end{split}
\end{equation}

The space of Jacobi fields vanishing at $t=0$ is three-dimensional. One such field arises from variations of the initial speed and is given by
$$ W_1(t) = t \gamma_\omega(t)u_\omega(t). $$
Since $W_1(t)$ vanishes only at $t=0$, conjugate points at positive times occur precisely when the three Jacobi fields $W_1(t)$, $W_2(t)$ and $W_3(t)$ become linearly dependent. The next section finds the Jacobi fields $W_2(t)$ and $W_3(t)$. By Remark \ref{rem:t-rev-prp} and $\SO$-symmetry, it suffices to consider $\omega \in [0,1]$. We distinguish five cases:
\textbf{(1)} $\omega = 1$; \textbf{(2)} $\omega \in (\frac{1}{\sqrt{2}},1)$; \textbf{(3)}  $\omega =\frac{1}{\sqrt{2}}$; \textbf{(4)}  $\omega \in (0,\frac{1}{\sqrt{2}})$; \textbf{(5)} $\omega  = 0$. 


\subsection{Conjugate Points}\label{subsec:con-poin}

The strategy for solving the linearized Euler-Poincar\'e equation is as follows: We use the fact that the infinitesimal generators of the left action of a Lie group $\mathbb{G}$ on itself are the right-invariant vector fields. Since the metric is left-invariant, these vector fields generate one-parameter families of isometries and are therefore Killing fields. Hence, their restrictions to the geodesic $\gamma_\omega(t)$ are Jacobi fields. Next, we use the adjoint action to express the right-invariant vector field $X^R_\xi(t)$, with $\xi\in \mathfrak{g}$, in the left-trivialization form as
$$ X^R_\xi\big(\gamma_\omega(t)\big) = \gamma_\omega(t)(\Ad_{\gamma_\omega^{-1}(t)} \xi). $$
Consequently, the left trivializations of three linearly independent right-invariant vector fields form a fundamental set of solutions of the homogeneous part of Eq. \eqref{eq:lin-eul-poi-1}. We first solve the linear system in Eq. \eqref{eq:lin-eul-poi-2}. Substituting its solutions into Eq. \eqref{eq:lin-eul-poi-1}, we then apply variation of parameters to obtain the general solution.

We omit the details of these computations and present the Jacobi fields vanishing at $t=0$, which are relevant to our analysis.

\subsubsection{Case $\omega = 1$} Consider $\omega = 1$. In this case, the geodesic $\gamma_\omega$ has the form
$$ \gamma_\omega(t) = \exp(t\xi_3),$$
and the linearized Euler-Poincar\'e equation is 
\begin{align*}
       \dot{\eta}_1(t) &= \zeta_1 +2 \eta_2  ,  \quad  & \dot{\zeta}_1(t)&=4 \zeta_2,\\
       \dot{\eta}_2(t) & = \zeta_2 - 2 \eta_1 , \quad &\dot{\zeta}_2(t) &=-4  \zeta_1  ,\\
       \dot{\eta}_3(t) & = \zeta_3 , \quad & \dot{\zeta}_3(t) &=0. 
    \end{align*}
Then, for $\omega = 1$, the basis of Jacobi fields vanishing at $t=0$ is completed by
\begin{equation}\label{eq:jac-vec-1}
    \begin{split}
        W_2(t) & = \sin(t) \gamma_\omega(t)\big(\sin(3t)\xi_1+ \cos(3t) \xi_2\big), \\
        W_3(t) & = \sin(t) \gamma_\omega(t)\big(\cos(3t)\xi_1- \sin(3t) \xi_2\big).
    \end{split}
\end{equation}

\begin{lemma}\label{lem:con-time-1}
    For $\omega = \pm 1$,  the conjugate times of $\gamma_\omega$ are $t = \pi k$ where $k$ is a positive integer. Each conjugate time has multiplicity 2. 
\end{lemma}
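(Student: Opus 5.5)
The plan is to test linear independence of the three Jacobi fields vanishing at $t=0$ directly. By Remark \ref{rem:t-rev-prp} and the $\SO$-symmetry it suffices to treat $\omega=1$, since $\gamma_{-1}(t)=\gamma_1(-t)$ and the zero set of a Jacobi field is unchanged under time reversal. Left multiplication by $\gamma_\omega(t)$ is an isomorphism of tangent spaces, so I will work with the left-trivialized fields. For $\omega=1$ we have $u_1(t)=\xi_3$, hence $W_1(t)=t\,\gamma_\omega(t)\xi_3$, while $W_2$ and $W_3$ are given by Eq. \eqref{eq:jac-vec-1}. Their left trivializations are
\begin{equation*}
t\,\xi_3,\qquad \sin(t)\big(\sin(3t)\xi_1+\cos(3t)\xi_2\big),\qquad \sin(t)\big(\cos(3t)\xi_1-\sin(3t)\xi_2\big).
\end{equation*}

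The first step is to confirm that these three fields, together with the three-dimensional space of Jacobi fields vanishing at $0$, span that space. We need $W_2'(0)=\xi_2$ and $W_3'(0)=\xi_1$ (up to the factor $\gamma_\omega(0)=\Id$), and $W_1'(0)=\xi_3$. These are independent, so every Jacobi field vanishing at $0$ is a combination $aW_1+bW_2+cW_3$. If one wants to be careful, one can also verify directly that the displayed pairs solve the $\omega=1$ system, taking $\eta$ to be the trivialization and computing $\zeta=\dot\eta+\ad_{u}\eta$. That check is routine.

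The second step is to compute the determinant of the coefficient matrix in the basis $\xi_1,\xi_2,\xi_3$. The matrix is block diagonal: the $\xi_3$ block is $t$, and the $\{\xi_1,\xi_2\}$ block is
\begin{equation*}
\sin(t)\begin{pmatrix}\sin 3t & \cos 3t\\ \cos 3t & -\sin 3t\end{pmatrix}.
\end{equation*}
The $2\times 2$ block has determinant $-\sin^2(t)$, so the full determinant is $-t\sin^2(t)$. For $t>0$ this vanishes exactly at $t=\pi k$ with $k\in\mathbb{Z}_{>0}$, which identifies the conjugate times.

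The third step is the multiplicity. At $t=\pi k$ the $W_1$ component $t\xi_3$ is nonzero, and both $W_2$ and $W_3$ vanish identically because of the common factor $\sin(t)$. So the Jacobi fields vanishing at $0$ and at $\pi k$ are exactly those with $a=0$, which form the two-dimensional span of $W_2$ and $W_3$. Hence each conjugate time has multiplicity $2$.

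I do not expect a real obstacle. The only subtle point is justifying that $W_2$ and $W_3$ are genuinely Jacobi fields, since the paper omits that computation. I would settle it by the direct verification against the linearized Euler–Poincaré system sketched in the first step.
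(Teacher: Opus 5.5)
Your proposal is correct and follows essentially the same route as the paper: both use the explicit fields in Eq.~\eqref{eq:jac-vec-1}, observe that $W_2,W_3$ share the factor $\sin t$ and span the $\{\xi_1,\xi_2\}$-plane while $W_1=t\gamma_\omega\xi_3$ does not vanish for $t>0$, and reduce $\omega=-1$ to $\omega=1$ by time reversal. Your explicit determinant $-t\sin^2 t$, and your check that the independent initial derivatives $\xi_3,\xi_2,\xi_1$ make $W_1,W_2,W_3$ a basis of the Jacobi fields vanishing at $0$, are modest refinements of steps the paper leaves implicit.
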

\begin{proof}
    If $\omega =1$, then the vector fields in Eq. \eqref{eq:jac-vec-1} are Jacobi fields along $\gamma_\omega$.  Since the vectors
    $$ \sin(3t)\xi_1+ \cos(3t) \xi_2\quad\text{and}\quad \cos(3t)\xi_1- \sin(3t) \xi_2 $$
    are linearly independent and span the $(\xi_1,\xi_2)$-plane, the fields $W_1, W_2, W_3$ are linearly independent whenever $\sin(t) \neq 0$. At $t=\pi k$, both $W_2$ and $W_3$ vanish, while $W_1$ does not. Hence, the conjugate times are $t= \pi k$, where $k$ is a positive integer; each conjugate time has multiplicity $2$. The case $\omega = -1$ follows from Remark \ref{rem:t-rev-prp} and $\SO$-symmetry. 
\end{proof}

\subsubsection{Case $\omega \in (\frac{1}{\sqrt{2}},1)$}

By Lemma \ref{lem:expl-form},  the corresponding geodesic is given by
$$\gamma_\omega(t) = \left(\cos(t \sqrt{2\omega^2-1}) \Id + \frac{\sin(t\sqrt{2\omega^2-1})}{\sqrt{2\omega^2-1}}\big(\sqrt{1-\omega^2}\xi_1 -\omega \xi_3\big) \right) \exp(2\omega t \xi_3). $$
The linearized Euler-Poincar\'e equations are given by Eqs. \eqref{eq:lin-eul-poi-1} and \eqref{eq:lin-eul-poi-2}.  Setting $\alpha(\omega) = \sqrt{2\omega^2-1}$, a basis of the space of Jacobi fields vanishing at $t=0$ is completed by
\begin{equation}\label{eq:jac-vec-2}
    \begin{split}
        W_2(t) & = \sin\!\big(t \alpha(\omega)\big) \gamma_\omega(t) \big( f_1(t)\xi_1 +f_2(t)\xi_2+f_3(t)\xi_3 \big),   \\
        W_3(t) & = \gamma_\omega(t)\Big(g_1(t)\xi_1 + g_2(t)\xi_2 +\frac{1}{2} \omega  \sqrt{1-\omega ^2} \sin\!\big(2 t \sqrt{2\omega^2-1}\big)\xi_3\Big).
    \end{split}
\end{equation}
Here
\begin{equation*}
    \begin{split}
        f_1(t) &= \alpha(\omega) \sin (4\omega t) \cos\!\big(t \alpha(\omega)\big)-\omega \sin\!\big(t \alpha(\omega)\big) \cos (4\omega t), \\
        f_2(t) &= \omega \sin (4\omega t) \sin\!\big(t \alpha(\omega)\big)+\alpha(\omega) \cos (4\omega t) \cos\!\big(t \alpha(\omega)\big), \\
   f_3(t) &= \sqrt{1-\omega^2} \sin\!\big(t \alpha(\omega)\big), \\
   g_1(t) &= -\alpha(\omega) \big(\cos (4\omega t) \big(2 t (\omega^2-1)+\frac{\omega ^2
   \sin\!\big(2 t \alpha(\omega)\big)}{2 \alpha(\omega)}\big) \\
   &\mathrel{\phantom{=}}+\omega\sin (4\omega t)\sin^2\!\big(t
   \alpha(\omega)\big)\big), \\
   g_2(t) &= \alpha(\omega) \big(\sin (4\omega t) \big(2 t (\omega ^2-1)+\frac{\omega ^2
   \sin\!\big(2 t \alpha(\omega)\big)}{2 \alpha(\omega)}\big) \\
   &\mathrel{\phantom{=}}-\omega  \sin^2\!\big(t
   \alpha(\omega)\big) \cos(4\omega t)\big). 
    \end{split}
\end{equation*}

   \begin{lemma}\label{lem:con-time-2}
    For $\omega \in (-1,-\frac{1}{\sqrt{2}}) \cup (\frac{1}{\sqrt{2}},1)$, the first conjugate time of $\gamma_\omega$ is $t = \frac{\pi }{\sqrt{2\omega^2-1}}$ and it has multiplicity 1. 
\end{lemma}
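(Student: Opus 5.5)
The plan is to reduce conjugacy to the vanishing of a scalar determinant and then compute it in a rotating frame adapted to $u_\omega(t)$. By Remark~\ref{rem:t-rev-prp} and the $\SO$-symmetry it suffices to treat $\omega\in(\frac{1}{\sqrt2},1)$. Since left translation by $\gamma_\omega(t)$ is invertible, the Jacobi fields $W_1,W_2,W_3$ of Eq.~\eqref{eq:jac-vec-2} are linearly dependent at time $t$ exactly when their left-trivializations
\[
t\,u_\omega(t),\qquad \sin(t\alpha)\big(f_1\xi_1+f_2\xi_2+f_3\xi_3\big),\qquad g_1\xi_1+g_2\xi_2+\tfrac12\omega\sqrt{1-\omega^2}\sin(2t\alpha)\xi_3
\]
are linearly dependent. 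So I will compute the determinant $D(t)$ of their coefficient matrix in the basis $\{\xi_1,\xi_2,\xi_3\}$.

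To simplify the computation, I will pass to the orthonormal frame $e=\cos(4\omega t)\xi_1-\sin(4\omega t)\xi_2$, $e^\perp=\sin(4\omega t)\xi_1+\cos(4\omega t)\xi_2$, $\xi_3$. This frame is a rotation of the original one, so $D$ is unchanged. Write $s=\sin(t\alpha)$, $c=\cos(t\alpha)$, $p=\sqrt{1-\omega^2}$ and $B=2t(\omega^2-1)+\omega^2 sc/\alpha$. The oscillating factors $\sin(4\omega t),\cos(4\omega t)$ should then drop out, and the three vectors become
\[
u'=(p,0,\omega),\qquad f'=(-\omega s,\ \alpha c,\ ps),\qquad g'=(-\alpha B,\ -\alpha\omega s^2,\ \omega p sc).
\]
A direct expansion, using $p^2+\omega^2=1$ and $\alpha c B=-2t\alpha p^2c+\omega^2 sc^2$, should give
\[
D(t)=t\,s\,\alpha\omega\,\big(\sin(t\alpha)-2\alpha(1-\omega^2)\,t\cos(t\alpha)\big).
\]

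It then remains to show that the factor $h(t)=\sin(t\alpha)-2\alpha(1-\omega^2)t\cos(t\alpha)$ has no zero on $(0,\pi/\alpha]$. Set $x=t\alpha$, so that $h=\sin x-2(1-\omega^2)x\cos x$. Since $\omega^2>\frac12$, we have $2(1-\omega^2)<1$. On $(0,\pi/2)$ this gives $\tan x>x>2(1-\omega^2)x$, hence $h>0$. On $[\pi/2,\pi]$ we have $\sin x\ge0$ and $\cos x\le 0$, and the two are never simultaneously zero, so again $h>0$; for instance $h=2(1-\omega^2)\pi>0$ at $x=\pi$.

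Hence the first positive zero of $D$ is the first zero of $\sin(t\alpha)$, namely $t=\pi/\sqrt{2\omega^2-1}$. At that time $W_2$ vanishes identically. Because $h(\pi/\alpha)\neq0$, the $2\times2$ minors coming from $u'$ and $g'$ are not all zero, so $W_1$ and $W_3$ remain independent. The span of the Jacobi fields therefore drops by exactly one, and the multiplicity is $1$.

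The main obstacle is the determinant expansion. I expect it to collapse cleanly only after the rotation to the adaptive frame; the identities $p^2+\omega^2=1$ and $s^2+c^2=1$ are what turn $p^2s+\omega^2s^3+\omega^2sc^2$ into simply $s$. I will double-check the sign conventions in $g_1,g_2$ while carrying out this step.
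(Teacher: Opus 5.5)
Your proposal is correct and follows the paper's proof. You compute the same determinant $\det(W_1,W_2,W_3)=\omega\alpha t\sin(\alpha t)\bigl(\sin(\alpha t)-2(1-\omega^2)\alpha t\cos(\alpha t)\bigr)$, using the rotating frame $e,e^\perp,\xi_3$ as a clean way to verify it, and you get multiplicity one from $W_2(\pi/\alpha)=0$ with $W_1,W_3$ independent. Your positivity argument for the bracket on all of $x\in(0,\pi]$, and your use of $h(\pi)\neq 0$ to justify that $W_1,W_3$ are independent, supply steps the paper states loosely: its proof only checks positivity on $(0,\pi/(2\alpha))$.
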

\begin{proof}
    For $\omega \in (-1,-\frac{1}{\sqrt{2}}) \cup (\frac{1}{\sqrt{2}},1)$, the vector fields in Eq. \eqref{eq:jac-vec-2} are Jacobi fields along $\gamma_\omega$. Setting \(\alpha(\omega)\coloneqq\sqrt{2 \omega^2-1}\), a direct computation gives
    \begin{equation*}
    \begin{split}
        \det(W_1,W_2,W_3) &=\omega \alpha(\omega) t \sin( \alpha(\omega)t)\big( \sin( \alpha(\omega)t) - 2(1-\omega^2)\alpha(\omega)t\cos(\alpha(\omega)t) \big). \\
    \end{split}
    \end{equation*}
    Since $0<2(1-\omega^2) <1$ and $\tan x > x$ for $x\in (0,\frac{\pi}{2})$, the expression in parentheses is strictly positive for $t \in (0,\frac{\pi}{2\alpha(\omega)})$. Consequently, the determinant does not vanish for $t \in (0,\frac{\pi}{2\alpha(\omega)})$ and vanishes at $t =\frac{\pi}{2\alpha(\omega)}$. Hence the first conjugate time is at $t=\frac{\pi}{\sqrt{2 \omega^2-1}}$. At this time, $W_2$ vanishes, while $W_1$ and $W_3$ are linearly independent. Therefore, the first conjugate time has multiplicity $1$. The case $\omega \in (-1,-\frac{1}{\sqrt{2}})$ follows from Remark \ref{rem:t-rev-prp} and $\SO$-symmetry.
    \end{proof}


\subsubsection{Case $\omega=\pm \frac{1}{\sqrt{2}}$}

Consider $\omega = \frac{1}{\sqrt{2}}$. In this case, the geodesic $\gamma_\omega$ has the form
$$\gamma_\omega(t) = \left( \Id + \frac{t}{\sqrt{2}} (\xi_1-\xi_3) \right)\exp(\sqrt{2}t \xi_3), $$
and the linearized Euler-Poincar\'e equation is
\begin{align*}
       \dot{\eta}_1(t) &= \zeta_1 + \sqrt{2}\eta_2+ \sqrt{2}\sin(2\sqrt{2}t)\eta_3,\\
       \dot{\eta}_2(t) & = \zeta_2 - \sqrt{2}\eta_1 + \sqrt{2}\cos(2\sqrt{2}t)\eta_3 ,\\
      \dot{\eta}_3(t) & = \zeta_3 + \sqrt{2}\big(\cos(2\sqrt{2} t)\eta_2+\sin(2\sqrt{2} t)\eta_1\big) , 
\end{align*} 
and 
\begin{align*}
       \dot{\zeta}_1(t)&=2\sqrt{2}(\zeta_2 - \sin(2\sqrt{2}t)\zeta_3),  \\
       \dot{\zeta}_2(t) &=- 2\sqrt{2}(\zeta_1 +\cos(2\sqrt{2}t)\zeta_3), \\
       \dot{\zeta}_3(t) &=0. 
\end{align*}
A basis of the space of Jacobi fields vanishing at $t=0$ is completed by
\begin{equation}\label{eq:jac-vec-3}
    \begin{array}{l}
        W_2(t)  = \frac{t}{\sqrt{2}} \gamma_\omega(t)\big( f_1(t)\xi_1+f_2(t)\xi_2+t\xi_3\big), 
   \\[1mm]
         W_3(t)  = \frac{t}{6}\gamma_\omega(t)\big(g_1(t)\xi_1- g_2(t)\xi_2+2  \big(3-t^2\big)\xi_3\big). 
    \end{array}
\end{equation}
For this case, the auxiliary functions $f_1,f_2,g_1,g_2\vcentcolon\R \to \R$ are defined by 
\begin{equation*}
\begin{split}
    f_1(t)& = \sqrt{2} \sin (2 \sqrt{2} t)-t \cos (2 \sqrt{2} t),\\
   f_2(t) & = t
   \sin (2 \sqrt{2} t)+\sqrt{2} \cos(2 \sqrt{2}
   t),\\
   g_1(t)& = 2 t^2 \cos(2 \sqrt{2}t)-3 \sqrt{2} t \sin(2 \sqrt{2} t),\\
   g_2(t) & = 2 t^2 \sin(2 \sqrt{2} t)+3 \sqrt{2} t \cos (2 \sqrt{2} t).
\end{split}
\end{equation*}

\begin{lemma}\label{lem:con-time-3}
   For $\omega = \pm\frac{1}{\sqrt{2}}$, the geodesic $\gamma_\omega$ has no conjugate times. 
\end{lemma}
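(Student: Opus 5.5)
The plan is to mirror the proof of Lemma \ref{lem:con-time-2}. Since left translation by $\gamma_\omega(t)$ is a linear isomorphism, the Jacobi fields $W_1,W_2,W_3$ are linearly dependent at time $t$ exactly when their left-trivialized coefficient vectors in the basis $\{\xi_1,\xi_2,\xi_3\}$ are dependent. So it suffices to show that $\det(W_1,W_2,W_3)$, computed from these coefficients, vanishes for no $t>0$. By Remark \ref{rem:t-rev-prp} and $\SO$-symmetry it is enough to treat $\omega=\frac{1}{\sqrt{2}}$.

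The coefficient vectors are as follows. For $W_1$ it is $t\,u_\omega(t)=\frac{t}{\sqrt2}\big(\cos(2\sqrt2 t),-\sin(2\sqrt2 t),1\big)$. For $W_2$ and $W_3$ they come from Eq. \eqref{eq:jac-vec-3}. The trigonometric dependence enters only through a rotation by angle $2\sqrt2 t$ in the $(\xi_1,\xi_2)$-plane, so I first apply the rotation $R(t)$ that undoes it. This rotation has determinant $1$ and fixes $\xi_3$. After applying it:
\begin{itemize}
\item $W_1$ becomes $\frac{t}{\sqrt2}(1,0,1)$;
\item $(f_1,f_2)$ becomes $(-t,\sqrt2)$, so $W_2$ becomes $\frac{t}{\sqrt2}(-t,\sqrt2,t)$;
\item $(g_1,-g_2)$ becomes $(2t^2,-3\sqrt2 t)$, so $W_3$ becomes $\frac{t}{6}\big(2t^2,-3\sqrt2 t,2(3-t^2)\big)$.
\end{itemize}
I would double-check these component-wise against the formulas for $f_i,g_i$; up to sign conventions in the rotation, this is what they yield. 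The determinant is then a polynomial in $t$:
\begin{equation*}
\det(W_1,W_2,W_3)=\frac{t^3}{12}\det\begin{pmatrix}1&0&1\\ -t&\sqrt2&t\\ 2t^2&-3\sqrt2 t&6-2t^2\end{pmatrix}.
\end{equation*}
Expanding the $3\times 3$ determinant should leave a positive multiple of $t^2$, giving $\det(W_1,W_2,W_3)=c\,t^5$ with $c>0$. In that case the determinant is nonzero for every $t>0$, and there are no conjugate times.

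As a consistency check, take the limit $\omega\to\frac{1}{\sqrt2}^+$ in the determinant formula from Lemma \ref{lem:con-time-2}. With $\alpha=\alpha(\omega)\to0$ and $2(1-\omega^2)\to1$, the bracket is $\sin(\alpha t)-\alpha t\cos(\alpha t)\approx(\alpha t)^3/3>0$. After the normalization $\sin(\alpha t)\sim\alpha t$ of $W_2$, this predicts exactly a positive multiple of $t^5$. This matches the expected answer and guards against arithmetic slips.

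The main obstacle is getting the rotated components of $W_3$ exactly right, including signs. A sign error in the $\xi_3$-entry $2(3-t^2)$ could turn $t^2$ into a polynomial with a positive root, which would falsely create a conjugate time. If the direct expansion does not come out as a positive multiple of $t^5$, I would recheck it against the limiting formula above, and if needed verify that $W_3$ satisfies the linearized system with these coefficients.
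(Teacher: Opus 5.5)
Your approach is the paper's: compute $\det(W_1,W_2,W_3)$ from the left-trivialized coefficients of the fields in Eq. \eqref{eq:jac-vec-3}, show it has no zero for $t>0$, and use symmetry for $\omega=-\frac{1}{\sqrt2}$. The rotation $R(t)$ is a legitimate simplification, and your three rotated coefficient vectors are correct. The error is in the value you predict for the $3\times 3$ determinant. Expanding along the first row gives
\[
\det\begin{pmatrix}1&0&1\\ -t&\sqrt2&t\\ 2t^2&-3\sqrt2\, t&6-2t^2\end{pmatrix}
=\bigl(\sqrt2(6-2t^2)+3\sqrt2\,t^2\bigr)+\bigl(3\sqrt2\,t^2-2\sqrt2\,t^2\bigr)=2\sqrt2\,(t^2+3).
\]
Hence $\det(W_1,W_2,W_3)=\frac{t^3}{12}\cdot 2\sqrt2(t^2+3)=\frac{t^3(t^2+3)}{3\sqrt2}$, which is exactly the paper's formula and not a multiple of $t^5$. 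The lemma still follows, because this expression is positive for $t>0$. However, your claim that the expansion leaves a positive multiple of $t^2$ is false.

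The mistake comes from your consistency check. You replaced $2(1-\omega^2)$ by its limit $1$, but in fact $2(1-\omega^2)=1-\alpha^2$ exactly. The term you discarded is $\alpha^2\cdot\alpha t\cos(\alpha t)=\alpha^3 t+O(\alpha^5)$. It has the same order as $\sin(\alpha t)-\alpha t\cos(\alpha t)=\frac{(\alpha t)^3}{3}+O(\alpha^5)$. Keeping it, the bracket in Lemma \ref{lem:con-time-2} becomes $\frac{\alpha^3 t(t^2+3)}{3}+O(\alpha^5)$. The full determinant there is then $\frac{\omega\alpha^5 t^3(t^2+3)}{3}+O(\alpha^7)$, so the limit predicts the profile $t^3(t^2+3)$, in agreement with the direct computation. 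As written, your fallback plan (recheck whenever the expansion is not a positive multiple of $t^5$) would have sent you looking for a nonexistent error in a correct computation. Replace the predicted $c\,t^5$ by $\frac{t^3(t^2+3)}{3\sqrt2}$ and the argument is complete.
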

\begin{proof}
    For $\omega = \frac{1}{\sqrt{2}}$, the fields in Eq. \eqref{eq:jac-vec-3} are Jacobi fields along $\gamma_\omega$.  A direct computation shows that
    \begin{equation*}
    \begin{split}
       \det(W_1,W_2,W_3)  = \frac{t^3 (t^2+3)}{3 \sqrt{2}}.
    \end{split}
    \end{equation*}
    Since this determinant is nonzero for every $t \neq 0$, the Jacobi fields remain linearly independent at all positive times. Hence, $\gamma_\omega$ has no conjugate points. The case $\omega = -\frac{1}{\sqrt{2}}$ follows from Remark \ref{rem:t-rev-prp} and $\SO$-symmetry. 
    \end{proof}


\subsubsection{Case $\omega \in  (0,\frac{1}{\sqrt{2}})$}

By Lemma \ref{lem:expl-form},  the corresponding geodesic is given by
$$\gamma_\omega(t) = \Big(\cosh(t \sqrt{1-2\omega^2}) \Id + \frac{\sinh(t\sqrt{1-2\omega^2})}{\sqrt{1-2\omega^2}} \big( \sqrt{1-\omega^2}\xi_1 - \omega\xi_3) \Big)\exp(2\omega t \xi_3). $$
The linearized Euler-Poincar\'e equations are given by Eqs. \eqref{eq:lin-eul-poi-1} and \eqref{eq:lin-eul-poi-2}.  Setting $\alpha(\omega) = \sqrt{1-2\omega^2}$, a basis of the space of Jacobi fields vanishing at $t=0$ is completed by
\begin{equation}\label{eq:jac-vec-4}
    \begin{split}
        W_2(t) & = \sinh\!\big(t \alpha(\omega)\big) \gamma_\omega(t) \big( f_1(t)\xi_1 +f_2(t)\xi_2+f_3(t)\xi_3 \big),   \\
        W_3(t) & = \gamma_\omega(t)\big(g_1(t)\xi_1 + g_2(t)\xi_2 +\frac{1}{2} \omega  \sqrt{1-\omega ^2} \sinh\!\big(2 t \alpha(\omega)\big)\xi_3\big).
    \end{split}
\end{equation}
For this case, the auxiliary functions $f_1,f_2,g_1,g_2\vcentcolon\R \to \R$ are defined by 
\begin{equation*}
    \begin{split}
        f_1(t) &= \alpha(\omega) \sin (4\omega t) \cosh\!\big(t \alpha(\omega)\big)-\omega \sinh\!\big(t \alpha(\omega)\big) \cos (4\omega t), \\
        f_2(t) &= \omega \sin (4\omega t) \sinh\!\big(t \alpha(\omega)\big)+\alpha(\omega) \cos (4\omega t) \cosh\!\big(t \alpha(\omega)\big), \\
   f_3(t) &= \sqrt{1-\omega^2} \sinh\!\big(t \alpha(\omega)\big), \\
   g_1(t) &= -\alpha(\omega) \big(\cos (4\omega t) \big(2 t (\omega ^2-1)+\frac{\omega ^2
   \sinh\!\big(2 t \alpha(\omega)\big)}{2 \alpha(\omega)}\big) \\
   &\mathrel{\phantom{=}}-\omega\sin (4\omega t)\sinh^2\!\big(t
   \alpha(\omega)\big)\big), \\
   g_2(t) &= \alpha(\omega) \big(\sin (4\omega t) \big(2 t (\omega ^2-1)+\frac{\omega ^2
   \sinh\!\big(2 t \alpha(\omega)\big)}{2 \alpha(\omega)}\big) \\
   &\mathrel{\phantom{=}}+\omega  \sinh^2\!\big(t
   \alpha(\omega)\big) \cos(4\omega t)\big). 
    \end{split}
\end{equation*}

\begin{lemma}\label{lem:con-time-4}
    For $\omega \in (-\frac{1}{\sqrt{2}},0)\cup (0,\frac{1}{\sqrt{2}})$, the geodesic $\gamma_\omega$ has no conjugate times. 
\end{lemma}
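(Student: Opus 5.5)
The plan follows the proofs of Lemmas \ref{lem:con-time-2} and \ref{lem:con-time-3}. By Remark \ref{rem:t-rev-prp} and the $\SO$-symmetry it suffices to treat $\omega\in(0,\frac{1}{\sqrt{2}})$. Set $\alpha(\omega)=\sqrt{1-2\omega^2}\in(0,1)$; the case $\omega\in(-\frac{1}{\sqrt 2},0)$ then follows as before.

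First I would check, by substituting into Eqs. \eqref{eq:lin-eul-poi-1}--\eqref{eq:lin-eul-poi-2}, that the fields in Eq. \eqref{eq:jac-vec-4} are Jacobi fields vanishing at $t=0$, together with $W_1(t)=t\gamma_\omega(t)u_\omega(t)$. Since all three are written as $\gamma_\omega(t)\eta(t)$ and left translation is invertible, linear dependence of $W_1,W_2,W_3$ is equivalent to the vanishing of the $3\times3$ determinant of the $\eta$-coefficient vectors. I would then compute this determinant directly. By formal analogy with the trigonometric case, where $\sin$ goes to $\sinh$ and $2\omega^2-1$ goes to $1-2\omega^2$ with sign changes, I expect
\begin{equation*}
\det(W_1,W_2,W_3)=\omega\,\alpha(\omega)\,t\,\sinh(\alpha(\omega) t)\big(2(1-\omega^2)\alpha(\omega)t\cosh(\alpha(\omega)t)-\sinh(\alpha(\omega)t)\big),
\end{equation*}
possibly up to an overall sign. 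The rotating factors $\cos(4\omega t)$ and $\sin(4\omega t)$ should cancel, as they do in the other cases, because $u_\omega$ is the rotation $\Ad_{\exp(-2\omega t\xi_3)}$ of $\xi_\omega$ and the determinant is invariant under it.

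It then remains to show that the bracket does not vanish for $t>0$. Put $x=\alpha(\omega)t>0$ and $c=2(1-\omega^2)$. Since $\omega^2<\frac12$, we have $c>1$. Because $\tanh x<x$ for $x>0$, it follows that $c\,x\cosh x>x\cosh x>\sinh x$, so the bracket is strictly positive. The prefactors $\omega$, $\alpha(\omega)$, $t$ and $\sinh(\alpha(\omega) t)$ are also positive for $t>0$. Hence the determinant never vanishes for $t>0$, and $\gamma_\omega$ has no conjugate times.

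The main obstacle is the determinant computation itself, which is lengthy. To keep it manageable, I would first factor out the rotation in the $(\xi_1,\xi_2)$-plane by writing every coefficient vector in the rotated frame $\Ad_{\exp(-2\omega t\xi_3)}$. This eliminates $4\omega t$ entirely and leaves only hyperbolic functions of $\alpha t$ and polynomial terms in $t$. As a fallback, I would note that the hyperbolic formulas are obtained from those of Lemma \ref{lem:con-time-2} by analytic continuation $\alpha\mapsto i\alpha$. The determinant there is analytic in $\omega$, so the expression above follows by substituting $\sqrt{2\omega^2-1}=i\sqrt{1-2\omega^2}$, using $\sin(ix)=i\sinh x$ and $\cos(ix)=\cosh x$.
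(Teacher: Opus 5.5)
Your proposal is correct and is essentially the paper's argument. The paper states, by direct computation, that $\det(W_1,W_2,W_3)=\omega t\alpha\sinh(\alpha t)\cosh(\alpha t)\big(\tanh(\alpha t)-2(1-\omega^2)\alpha t\big)$, which is exactly the negative of your predicted expression, and it concludes with the same two facts, $2(1-\omega^2)>1$ and $\tanh x<x$.

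One caution applies only to your fallback. The determinant in Lemma~\ref{lem:con-time-2} is odd in $\alpha$, because $W_3$ is odd in $\alpha$ while $W_1$ and $W_2$ are even. It is therefore not analytic in $\omega$ across $\omega=\frac{1}{\sqrt{2}}$, and naively substituting $\alpha\mapsto i\alpha$ gives your expression multiplied by $i$. This is harmless for the zero set, but it reflects that the continued fields are $-W_2$ and $iW_3$ of Eq.~\eqref{eq:jac-vec-4}, not that the determinant itself is analytic.
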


\begin{proof}
    For $\omega \in (-\frac{1}{\sqrt{2}},0)\cup (0,\frac{1}{\sqrt{2}})$, the vector fields in Eq. \eqref{eq:jac-vec-4} are Jacobi fields along $\gamma_\omega$. Setting \(\alpha(\omega)\coloneqq\sqrt{1-2\omega^2}\), a direct computation gives that
    \begin{equation*}
    \begin{split}
        \det(W_1,W_2,W_3) & = \omega t \alpha(\omega)\sinh(\alpha t)\cosh(\alpha t)
\big(\tanh(\alpha t)-2(1-\omega^2)\alpha(\omega) t\big).
    \end{split}
    \end{equation*}
    Since $2(1-\omega^2)>1$ and $|\tanh(x)|<|x|$ for every $x\neq 0$, the expression in parentheses is nonzero for every $t\neq 0$. Consequently, the determinant above vanishes only at $t=0$, and $\gamma_\omega$ has no conjugate times.
    \end{proof}


    %



\subsubsection{Case $\omega = 0$}

In this case, the geodesic $\gamma_\omega(t)$ has the form
$$ \gamma_\omega(t)  = \cosh(t ) \Id + \sinh(t) \xi_1 ,$$
and the linearized Euler-Poincar\'e equation is 
\begin{align*}
       \dot{\eta}_1(t) &= \zeta_1   ,  \quad  & \dot{\zeta}_1(t)&=0,\\
       \dot{\eta}_2(t) & = \zeta_2 +2\eta_3 , \quad &\dot{\zeta}_2(t) &= -4\zeta_3,\\
       \dot{\eta}_3(t) & = \zeta_3 +2\eta_2, \quad & \dot{\zeta}_3(t) &=0. 
    \end{align*}
Then, the basis of Jacobi fields vanishing at $t=0$ is completed by
\begin{equation}
    \begin{split}\label{eq:jac-vec-5}
    W_2(t) & =  \gamma_\omega(t)\big( \sinh(2t) \xi_2+ (\cosh(2t)-1) \xi_3\big), \\
    W_3(t) & = \gamma_\omega(t)\big( (\cosh(2t)-1) \xi_2- \sinh(2t) \xi_3\big).
    \end{split}
\end{equation}
\begin{lemma}\label{lem:con-time-5}
    Consider $\omega = 0$. The geodesic $\gamma_\omega(t)$ defined in Eq. \eqref{eq:def-gam-ome} has no conjugate times. 
\end{lemma}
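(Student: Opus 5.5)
The plan is to follow the same scheme as Lemmas \ref{lem:con-time-1}--\ref{lem:con-time-4}. Using the three Jacobi fields vanishing at $t=0$, namely $W_1(t)=t\gamma_\omega(t)u_\omega(t)$ and the fields $W_2,W_3$ of Eq. \eqref{eq:jac-vec-5}, we show that they stay linearly independent for every $t>0$. Here $\omega=0$, so $u_\omega(t)=\xi_1$ is constant and $W_1(t)=t\gamma_\omega(t)\xi_1$.

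First, confirm that $W_2$ and $W_3$ really are Jacobi fields vanishing at $0$. Write $\eta^{(2)}=(0,\sinh 2t,\cosh 2t-1)$ and $\eta^{(3)}=(0,\cosh 2t-1,-\sinh 2t)$. Plug these into the linearized system displayed for $\omega=0$ with constant $\zeta$:
\begin{itemize}
\item For $W_2$, take $\zeta=(0,2,0)$. Then $\dot\eta_2=2\cosh 2t=\zeta_2+2\eta_3$ and $\dot\eta_3=2\sinh 2t=2\eta_2$.
\item For $W_3$, take $\zeta=(0,0,-2)$. Then $\dot\eta_2=2\sinh 2t=2\eta_3+0$, and $\dot\eta_3=-2\cosh 2t=-2+2(\cosh 2t-1)$.
\end{itemize}
In both cases $\dot\zeta=0$ holds, consistent with $\dot\zeta_2=-4\zeta_3$ only when $\zeta_3=0$. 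I must double-check the $W_3$ case: there $\zeta_3=-2$, so $\dot\zeta_2=8\neq0$ unless I mis-signed. If it fails, I will adjust $\zeta$ by solving the $\zeta$-system exactly, or verify $W_3$ through the variation-of-parameters construction described before the case analysis. Either way the statement only needs some basis of the $3$-dimensional space of Jacobi fields vanishing at $0$. One could also simply accept Eq. \eqref{eq:jac-vec-5} as given.

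Next, since left translation by $\gamma_\omega(t)$ is an isomorphism, linear dependence of $W_1,W_2,W_3$ at time $t$ is equivalent to linear dependence of their left-trivializations in $\sla$. In the basis $\xi_1,\xi_2,\xi_3$ the determinant is block diagonal:
\begin{equation*}
\det(W_1,W_2,W_3)=t\cdot\det\begin{pmatrix}\sinh 2t & \cosh 2t-1\\ \cosh 2t-1 & -\sinh 2t\end{pmatrix}=-t\big(\sinh^2 2t+(\cosh 2t-1)^2\big).
\end{equation*}
Using $\cosh^2-\sinh^2=1$, this simplifies to $-2t(\cosh 2t-1)\big(\cosh 2t\big)\cdot\frac{2}{2}$. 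More directly, it equals $-2t\cosh 2t(\cosh 2t-1)\cdot$ a positive factor, and this is strictly negative for $t>0$ because $\cosh 2t>1$. Hence the fields never become dependent for $t\neq0$, so there are no conjugate times. By Remark \ref{rem:t-rev-prp} the same holds for negative $t$.

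The only real obstacle is the verification that the fields in Eq. \eqref{eq:jac-vec-5} satisfy the coupled system, which amounts to bookkeeping of signs. Once that is settled, the determinant computation is immediate.
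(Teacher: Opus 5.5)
Your plan and your determinant coincide with the paper's proof. However, there is a genuine gap exactly where you hesitated. The field $W_3$ in Eq. \eqref{eq:jac-vec-5}, as printed, is not a Jacobi field, so ``accepting it as given'' is not an option; the paper's own argument has the same defect.

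Both of your check lines for $W_3$ are false. With $\eta_3=-\sinh 2t$ one gets $2\eta_3=-2\sinh 2t\neq\dot\eta_2$. Likewise $-2+2(\cosh 2t-1)=2\cosh 2t-4\neq-2\cosh 2t$. More decisively, the third equation of the $\omega=0$ system forces $\zeta_3=\dot\eta_3-2\eta_2=2-4\cosh 2t$, which is not constant and so contradicts $\dot\zeta_3=0$.

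A constant-$\zeta$ ansatz can never produce the third field anyway. Jacobi fields vanishing at $0$ are parametrized by $\zeta(0)=(c_1,c_2,c_3)$, and those with $c_3=0$ span only $W_1,W_2$. For $c_3\neq0$, the equation $\dot\zeta_2=-4\zeta_3$ makes $\zeta_2$ linear in $t$. Hence $-t(\sinh^2 2t+(\cosh 2t-1)^2)$ is the determinant of a triple that is not a basis of the relevant Jacobi fields, and its nonvanishing says nothing about conjugate points.

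The fix is the fallback you mention: actually solve the system. Taking $\zeta=(0,8t,-2)$ and $\eta(0)=0$ gives
\[
W_3(t)=\gamma_\omega(t)\big((\cosh 2t-1)\xi_2+(\sinh 2t-4t)\xi_3\big).
\]
One checks $\dot\eta_2=2\sinh 2t=8t+2\eta_3$ and $\dot\eta_3=2\cosh 2t-4=-2+2\eta_2$. Independently, $W_3(t)=-2\,d\Exp|_{t\xi_1}(t\xi_3)$, which you can compute from Lemma \ref{lem:expl-form} with $\xi=\xi_1+s\xi_3$. With this basis,
\[
\det(W_1,W_2,W_3)=2t\big(\cosh 2t-1-2t\sinh 2t\big)=2t\sinh(2t)\big(\tanh t-2t\big),
\]
which is nonzero for $t\neq0$ because $|\tanh t|<|t|$. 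So the lemma holds, but the operative input is the inequality $\tanh t<t$ for $t>0$, not positivity of a sum of squares. This is consistent with the $\omega\to0$ limit of the factor $\tanh(\alpha t)-2(1-\omega^2)\alpha t$ in Lemma \ref{lem:con-time-4}.
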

\begin{proof}
    If $\omega =0$, the vector fields in Eq. \eqref{eq:jac-vec-5} are Jacobi fields along $\gamma_\omega$. 
    $$\det(W_1,W_2,W_3) = -t\big(\sinh^2(2t) + (\cosh(2t)-1)^2\big)$$
    The expression in parentheses is nonzero for every $t\neq 0$. Therefore, the determinant above vanishes only at $t=0$, and $\gamma_\omega$ has no conjugate times.
\end{proof}


\begin{proof}[Proof of Theorem \ref{thm:classic}] We divide the proof into two cases: $\omega \in [-1,-\frac{1}{\sqrt{2}})\cup(\frac{1}{\sqrt{2}},1]$ and $\omega \in [-\frac{1}{\sqrt{2}},\frac{1}{\sqrt{2}}]$.

\textbf{(Case $\omega \in [-1,-\frac{1}{\sqrt{2}})\cup(\frac{1}{\sqrt{2}},1]$)} Lemmas \ref{lem:con-time-1} and \ref{lem:con-time-2} imply that the first conjugate time of $\gamma_\omega$ is $\Tcon(\omega) = \frac{\pi}{\sqrt{2\omega^2-1}}$. 
By $\SO$-symmetry and Proposition \ref{prp:iso-action}, this formula holds for every geodesic with initial velocity in $\Gamma_\omega$.

\textbf{(Case $\omega \in [-\frac{1}{\sqrt{2}},\frac{1}{\sqrt{2}}]$)} Lemmas \ref{lem:con-time-3}, \ref{lem:con-time-4} and \ref{lem:con-time-5} imply that $\gamma_\omega$ has no conjugate times. By the $\SO$-symmetry and Proposition \ref{prp:iso-action}, the same holds for every geodesic with initial velocity in $\Gamma_\omega$. Therefore, $\Tcon (\omega) =\infty$.   

By Eq. \eqref{eq:def-gam-ome}, we have that if $\omega \in [-1,-\frac{1}{\sqrt{2}})\cup(\frac{1}{\sqrt{2}},1]$, then 
$$ \Exp(\Tcon (\omega),\xi_\omega)  = -\exp\!\bigg(\frac{\pi}{\sqrt{2\omega^2-1}}(\xi_\omega-\xi_\omega^\T)\bigg) = -\exp\!\bigg(\frac{2\pi\omega}{\sqrt{2\omega^2-1}}\xi_3\bigg) \in \SO.$$
Since $\frac{2\pi\omega}{\sqrt{2\omega^2-1}} \to \infty$ as $\omega \to \frac{1}{\sqrt{2}}^+$, and this function is continuous, its image contains an interval of the form $[\pi,\infty)$. By the $2\pi$-periodicity of $\exp(\theta\xi_3)$, every element of
$\SO$ is a conjugate point. Conversely, the preceding characterization
of the conjugate times shows that every conjugate point belongs to $\SO$.
Therefore,  $ \mathcal{Con}(\Id) = \SO$. 
\end{proof}

\section{Global Study}\label{sec:glo}

In this section, we study Maxwell points arising from intersections of geodesics with initial velocities in $\Gamma_\omega$ and $\Gamma_{-\omega}$. In Subsection \ref{subsec:max-time}, we reduce the Maxwell condition to a scalar equation in each of the three regimes determined by $\omega$ and use these equations to determine the first Maxwell time, proving Theorem \ref{thm:Max-time}. We finish this subsection by proving Corollary \ref{cor:t-max-t-con} and Lemma \ref{lem:g-ome-sym}. In Subsection \ref{subsec:reg-fir-max}, we study how $\Tmax(\omega)$ depends on $\omega$, including its regularity and global minimum, and prove Proposition 1.4.

\subsection{Maxwell Times}\label{subsec:max-time}

We begin by determining the Maxwell times corresponding to the $\Gamma_\omega$-symmetry. We then compare the first Maxwell time with the first conjugate time.

Our strategy is as follows. Consider the geodesic $\gamma_\omega$. We seek a time $t_\omega >0$ and a geodesic $\gamma_1\vcentcolon\R \to \SLg$ satisfying $\gamma_1(0) = \Id$, $\dot{\gamma}_1(0)\in \Gamma_{-\omega}$, $\dot{\gamma}_1(0) \neq \xi_\omega$,  such that $\gamma_\omega(t_\omega) = \gamma_1(t_\omega)$. To this end, we take the ansatz
\begin{equation}\label{eq:ansatz}
    \gamma_1(0) = \Id, \quad\text{and}\quad \dot{\gamma}_1(0) = \sqrt{1-\omega^2}\big(\cos\theta \xi_1 - \sin\theta \xi_2\big) - \omega \xi_3.
\end{equation}
     The parameter $\theta \in [0,2\pi)$ represents the angular coordinate of the initial velocity in $\Gamma_{-\omega}$. We determine $\theta$ by imposing the intersection condition $\gamma_\omega(t_\omega) = \gamma_1(t_\omega)$.  For $\omega\neq 0$, we divide the analysis into three cases: \textbf{(1)} $\omega \in [-1,-\frac{1}{\sqrt{2}}) \cup(\frac{1}{\sqrt{2}},1]$; \textbf{(2)} $\omega = \pm \frac{1}{\sqrt{2}}$; and \textbf{(3)} $\omega \in (-\frac{1}{\sqrt{2}},0) \cup(0,\frac{1}{\sqrt{2}})$. Since the methodology is identical for all three cases, we present only the first one in detail.

\subsubsection{Case $\omega \in [-1,-\frac{1}{\sqrt{2}}) \cup(\frac{1}{\sqrt{2}},1] $}

\begin{lemma}\label{lem:up-bound-1}
    Let $\omega \in [-1,-\frac{1}{\sqrt{2}}) \cup(\frac{1}{\sqrt{2}},1]$, and let $\gamma_\omega$ be the geodesic defined in Eq. \eqref{eq:def-gam-ome}. The Maxwell times corresponding to the $\Gamma_\omega$-symmetry are precisely the positive solutions of
        \begin{equation*}
            \cos(t\sqrt{2\omega^2-1}) \sin(2\omega t)  -\frac{\omega}{\sqrt{2\omega^2-1}} \sin(t\sqrt{2\omega^2-1})\cos(2\omega t) = 0.
        \end{equation*}
\end{lemma}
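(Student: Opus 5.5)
The plan is to use the explicit formula of Lemma \ref{lem:expl-form} for both geodesics. The ansatz \eqref{eq:ansatz} parametrizes all of $\Gamma_{-\omega}$ by $\theta$, so every candidate partner of $\gamma_\omega$ has the form $\gamma_1$. I will therefore reduce $\gamma_\omega(t)=\gamma_1(t)$ to a matrix identity in the basis $\{\Id,\xi_1,\xi_2,\xi_3\}$ of $M_2(\R)$.

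Set $\eta\coloneqq\dot\gamma_1(0)$. By Remark \ref{rem:Gam-par}, $\det\eta=\det\xi_\omega=2\omega^2-1>0$, so both geodesics fall in the same case of Lemma \ref{lem:expl-form} with the same $\alpha=\sqrt{2\omega^2-1}$. Write $c=\cos(\alpha t)$ and $s=\sin(\alpha t)$. Since $\xi_\omega-\xi_\omega^\T=2\omega\xi_3$ and $\eta-\eta^\T=-2\omega\xi_3$, multiplying on the right by $\exp(2\omega t\xi_3)$ shows that $\gamma_\omega(t)=\gamma_1(t)$ is equivalent to
\[
\Big(c\,\Id+\tfrac{s}{\alpha}\xi_\omega^\T\Big)\exp(4\omega t\,\xi_3)=c\,\Id+\tfrac{s}{\alpha}\eta^\T .
\]
Next I will expand both sides. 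Use $\exp(4\omega t\xi_3)=\cos(4\omega t)\Id+\sin(4\omega t)\xi_3$, $\xi_\omega^\T=\sqrt{1-\omega^2}\,\xi_1-\omega\xi_3$, and the products $\xi_1\xi_3=-\xi_2$, $\xi_2\xi_3=\xi_1$, $\xi_3^2=-\Id$. Comparing coefficients gives four scalar equations:
\[
\Id:\quad c\cos(4\omega t)+\tfrac{s\omega}{\alpha}\sin(4\omega t)=c,\qquad
\xi_3:\quad c\sin(4\omega t)-\tfrac{s\omega}{\alpha}\cos(4\omega t)=\tfrac{s\omega}{\alpha},
\]
\[
\xi_1,\xi_2:\quad \tfrac{s}{\alpha}\sqrt{1-\omega^2}\,\big(\cos(4\omega t),\,-\sin(4\omega t)\big)=\tfrac{s}{\alpha}\sqrt{1-\omega^2}\,\big(\cos\theta,\,-\sin\theta\big).
\]

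The key step is the half-angle reduction of the first two equations. With $1-\cos(4\omega t)=2\sin^2(2\omega t)$ and $1+\cos(4\omega t)=2\cos^2(2\omega t)$, they become
\[
\sin(2\omega t)\,B(t)=0\quad\text{and}\quad \cos(2\omega t)\,B(t)=0,\qquad B(t)\coloneqq c\sin(2\omega t)-\tfrac{\omega}{\alpha}s\cos(2\omega t).
\]
Since $\sin(2\omega t)$ and $\cos(2\omega t)$ never vanish simultaneously, the pair is equivalent to $B(t)=0$, which is exactly the displayed equation. The $\xi_1,\xi_2$ equations are then solved by $\theta\equiv 4\omega t \pmod{2\pi}$. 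In the degenerate situations $s=0$ or $\omega=\pm1$, these equations hold for every $\theta$, so a solution still exists.

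Finally I will check the two directions.
\begin{itemize}
\item If $B(t)=0$ with $t>0$, the $\theta$ found above gives a geodesic $\gamma_1$ meeting $\gamma_\omega$ at time $t$. Its initial velocity differs from $\xi_\omega$ because the $\xi_3$-components are $-\omega\neq\omega$ (note $\omega\neq0$ in this range). Hence $t$ is a Maxwell time.
\item Conversely, every Maxwell time corresponding to the $\Gamma_\omega$-symmetry arises from some $\gamma_1$ of the ansatz form, so $B(t)=0$.
\end{itemize}
The negative values of $\omega$ need no separate treatment, since the computation never used the sign of $\omega$. The main obstacle is purely bookkeeping: getting the multiplication table and signs right so that the $\Id$ and $\xi_3$ equations collapse to a common factor $B(t)$ rather than to two independent conditions.
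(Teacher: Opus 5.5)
Your proposal is correct and follows essentially the paper's argument: the explicit formulas of Lemma \ref{lem:expl-form}, the ansatz covering all of $\Gamma_{-\omega}$, coefficient comparison in a basis, and the choice $\theta\equiv 4\omega t \pmod{2\pi}$. The paper expands $\gamma_\omega(t)-\gamma_1(t)$ directly, where the $\Id$-coefficients cancel identically and the $\xi_3$-coefficient is $2F^1$, whereas you right-multiply by $\exp(2\omega t\xi_3)$ and factor the $\Id$ and $\xi_3$ equations as $\sin(2\omega t)B=\cos(2\omega t)B=0$; the paper also appends an intermediate-value argument for the existence of a positive zero, which the statement as posed does not require.
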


\begin{proof}
     Consider the ansatz in Eq. \eqref{eq:ansatz}. To determine $\theta$, we first use Lemma \ref{lem:expl-form} and the identities
$$ \exp(t(\xi_\omega-\xi_\omega^\T)) = \exp(2\omega\xi_3 t) = \cos(2\omega t) \Id +\sin(2\omega t) \xi_3,\qquad \xi_1\xi_3 = -\xi_2,$$
to express the difference $\gamma_\omega(t)-\gamma_1(t)$ as a linear combination of $\xi_1$, $\xi_2$, and $\xi_3$. Since the initial velocities of both geodesics have the same determinant, the coefficients of $\Id$ in their expressions given by Lemma \ref{lem:expl-form} coincide. Consequently, a direct computation gives
     \begin{equation}\label{eq:cut-point-proof}
         \gamma_\omega(t) -\gamma_1(t) = \psi_\theta(t)\xi_1+\phi_\theta(t)\xi_2 +2F^1(t,\omega)\xi_3.
     \end{equation}
     Here $\psi_\theta,\phi_\theta\vcentcolon\R \to \R$ are auxiliary functions defined by
     \begin{equation*}
        \begin{split}
            \psi_\theta(t) &\coloneqq -\frac{2 \sqrt{1-\omega ^2} \sin\!\big(\frac{\theta}{2}\big)\sin\!\big(\frac{1}{2} (4\omega t-\theta )\big) \sin\!\big(t \sqrt{2\omega^2-1}\big)}{\sqrt{2\omega^2-1}}, \\
            \phi_\theta(t) &\coloneqq -\frac{2 \sqrt{1-\omega ^2} \cos\!\big(\frac{\theta}{2}\big)\sin\!\big(\frac{1}{2} (4\omega t-\theta )\big) \sin\!\big(t \sqrt{2\omega^2-1}\big)}{\sqrt{2\omega^2-1}}.  
         \end{split}
     \end{equation*}
The auxiliary function $F^1\vcentcolon\R\times ([-1,-\frac{1}{\sqrt{2}}) \cup(\frac{1}{\sqrt{2}},1])\to \R$ is defined by
     \begin{equation}\label{eq:def-f-function}
        \begin{split}
            F^1(t,\omega) \coloneqq \cos(t\sqrt{2\omega^2-1}) \sin(2\omega t)  -\frac{\omega}{\sqrt{2\omega^2-1}} \sin(t\sqrt{2\omega^2-1})\cos(2\omega t). 
         \end{split}
     \end{equation}
     For the moment, assume there exists $(t_\omega,\omega) \in (0,\infty)\times ([-1,-\frac{1}{\sqrt{2}}) \cup(\frac{1}{\sqrt{2}},1]) $ such that $F^1(t_\omega,\omega) = 0$. We then choose $\theta \equiv 4\omega t_\omega \pmod{2\pi}$. By the expressions for $\psi_\theta$ and $\phi_\theta$, this choice ensures that $\psi_\theta(t_\omega) = \phi_\theta(t_\omega) = 0$. Consequently, Eq \eqref{eq:cut-point-proof} reduces to 
     \begin{equation*}
    \begin{split}
      \gamma_\omega(t_\omega) -\gamma_1(t_\omega) = 2 F^1(t_\omega,\omega) \xi_3 = 0. 
    \end{split}
    \end{equation*}
    Since $\omega\neq 0$, the initial velocities of $\gamma_\omega$ and $\gamma_1$ are distinct. Consequently, their intersection at $t_\omega$ is a Maxwell point.
     
    Conversely, suppose that $t>0$ is a Maxwell time corresponding to the $\Gamma_\omega$-symmetry. Then there exists a geodesic $\gamma_1$ with
initial velocity in $\Gamma_{-\omega}$ such that $\gamma_\omega(t)=\gamma_1(t).$
By Eq. \eqref{eq:cut-point-proof} and the linear independence of $\xi_1,\xi_2,\xi_3$, we obtain $F^1(t,\omega)=0.$ Therefore, the Maxwell times corresponding to the $\Gamma_\omega$-symmetry are precisely the positive zeros of $F^1(\cdot,\omega)$.
    
    Finally, we show that the equation $F^1(t_\omega,\omega) = 0$ admits a positive solution. By the symmetry $F^1(t,-\omega)= - F^1(t,\omega)$, it suffices to consider $\omega>0$. Define $f_\omega^1(t) \coloneqq F^1(t,\omega)$. Since $\omega \in(\frac{1}{\sqrt{2}},1]$, we have $0<\frac{\pi\sqrt{2\omega^2-1}}{2\omega} \leq \frac{\pi}{2}$. Consequently,
    $$f_\omega^1\Big(\frac{\pi}{2\omega}\Big) = \frac{\omega}{\sqrt{2\omega^2-1}}\sin\!\Big(\frac{\pi \sqrt{2\omega^2-1}}{2\omega}\Big) > 0,$$
    and
    $$f_\omega^1\Big(\frac{\pi}{\omega}\Big)= -\frac{\omega}{\sqrt{2\omega^2-1}}\sin\!\Big(\frac{\pi \sqrt{2\omega^2-1}}{\omega}\Big) \leq 0, $$
    where equality holds if and only if $\omega=1$. By the intermediate value theorem, there exists $t_\omega\in (\frac{\pi}{2\omega},\frac{\pi}{\omega}]$ such that $f^1_\omega(t_\omega)= 0$.  
\end{proof}

\begin{lemma}\label{lem:fir-max-tim-1}
    For each $\omega\in [-1,-\frac{1}{\sqrt{2}}) \cup(\frac{1}{\sqrt{2}},1]$, the first positive zero $t_\omega^*$ of $f^1_\omega$ satisfies   $t_\omega^* \in (\frac{\pi}{2|\omega|},\frac{\pi}{|\omega|}]$. Furthermore, $t_\omega^* = \frac{\pi}{|\omega|}$ if and only if $\omega = \pm 1$.   In particular, $t^*_\omega = \Tmax(\omega)$.
 
\end{lemma}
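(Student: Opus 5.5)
By the symmetry $F^1(t,-\omega)=-F^1(t,\omega)$ it suffices to take $\omega\in(\frac{1}{\sqrt2},1]$. Write $\alpha=\sqrt{2\omega^2-1}\in(0,1]$. Note $\alpha<\omega$ strictly, since $\omega^2-\alpha^2=1-\omega^2\ge 0$, with equality only at $\omega=1$. Lemma \ref{lem:up-bound-1} already gives a zero of $f^1_\omega$ in $(\frac{\pi}{2\omega},\frac{\pi}{\omega}]$. So the first task is to show that $f^1_\omega$ has no zero in $(0,\frac{\pi}{2\omega}]$. The last claim, $t^*_\omega=\Tmax(\omega)$, is then immediate: Lemma \ref{lem:up-bound-1} identifies the Maxwell times with the positive zeros of $f^1_\omega$.

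\textbf{No zero on $(0,\frac{\pi}{2\omega}]$.} On $(0,\frac{\pi}{4\omega}]$ we have $\alpha t\le\frac{\pi}{4}$, so $\cos(\alpha t)>0$. I will rewrite the equation as
$$f^1_\omega(t)=\cos(\alpha t)\cos(2\omega t)\Big(\tan(2\omega t)-\frac{\omega}{\alpha}\tan(\alpha t)\Big)$$
wherever $\cos(2\omega t)\neq0$. Consider $h(t)=\tan(2\omega t)-\frac{\omega}{\alpha}\tan(\alpha t)$ on $(0,\frac{\pi}{4\omega})$. Its derivative is
$$h'(t)=2\omega\sec^2(2\omega t)-\omega\sec^2(\alpha t).$$
Since $\sec^2(2\omega t)\ge1$ and $\sec^2(\alpha t)<2$ (because $\alpha t<\frac{\pi}{4}$), we get $h'>0$. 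Together with $h(0)=0$ this gives $h>0$, hence $f^1_\omega>0$ there. On $[\frac{\pi}{4\omega},\frac{\pi}{2\omega}]$ we have $\sin(2\omega t)>0$ and $\cos(2\omega t)\le 0$, and $\sin(\alpha t)\ge0$ because $\alpha t\le\frac{\pi}{2}$. So the second term $-\frac{\omega}{\alpha}\sin(\alpha t)\cos(2\omega t)$ is $\ge0$. The first term $\cos(\alpha t)\sin(2\omega t)$ is $>0$ whenever $\alpha t<\frac{\pi}{2}$. The endpoint case $\alpha t=\frac{\pi}{2}$ occurs only if $\alpha=1$ and $t=\frac{\pi}{2}$, and there the second term equals $\omega\cdot1>0$. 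Hence $f^1_\omega>0$ on $(0,\frac{\pi}{2\omega}]$.

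\textbf{Endpoint characterization.} If $\omega=1$, then $\alpha=1$ and $f^1_1(t)=\cos t\sin 2t-\sin t\cos 2t=\sin t$, whose first positive zero is $\pi=\frac{\pi}{\omega}$. If $\omega<1$, Lemma \ref{lem:up-bound-1} gives $f^1_\omega(\frac{\pi}{\omega})<0$ strictly, while $f^1_\omega(\frac{\pi}{2\omega})>0$. The intermediate value theorem then produces a zero strictly inside $(\frac{\pi}{2\omega},\frac{\pi}{\omega})$, so $t^*_\omega<\frac{\pi}{\omega}$.

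The main obstacle is the positivity argument on the first quarter-interval. There I must make sure the bound $\sec^2(\alpha t)<2$ is legitimate: it needs $\alpha t<\frac{\pi}{4}$, which follows from $\alpha\le\omega$. If this comparison turned out too tight, the fallback is to compare Taylor or convexity bounds of $\tan$ directly. The rest is sign bookkeeping.
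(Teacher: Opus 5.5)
Your proof is correct and follows the paper's argument: the reduction to $\omega>0$, the split at $t=\frac{\pi}{4\omega}$ with sign bookkeeping on $[\frac{\pi}{4\omega},\frac{\pi}{2\omega}]$, and the computation $f^1_1(t)=\sin t$ are all the same. The only differences are that you obtain $\tan(2\omega t)>\frac{\omega}{\alpha}\tan(\alpha t)$ from $h'>0$ rather than from the monotonicity of $\tan x/x$ on $(0,\frac{\pi}{2})$, and that you spell out the intermediate-value argument giving $t^*_\omega<\frac{\pi}{\omega}$ for $\omega<1$, which the paper leaves implicit in Lemma \ref{lem:up-bound-1}.

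There is one small slip. At $t=\frac{\pi}{2\omega}$ you have $\sin(2\omega t)=0$, so for $\omega<1$ the first term vanishes even though $\alpha t<\frac{\pi}{2}$. Positivity at that point must instead come from the second term, $\frac{\omega}{\alpha}\sin\big(\frac{\pi\alpha}{2\omega}\big)>0$. Also, your opening remark should read $\alpha\le\omega$, not $\alpha<\omega$ strictly.
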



\begin{proof}
    By the symmetry $F^1(t,-\omega)= - F^1(t,\omega)$, it suffices to consider $\omega\in(\frac{1}{\sqrt{2}},1]$. For $t \in (0,\frac{\pi}{2\omega})$, we have 
    $$ 0 <t\sqrt{2\omega^2-1} < t\omega < \frac{\pi}{2},$$ 
    where we used the inequality $2\omega^2-1 \leq \omega^2$. Consequently, $\sin(t\sqrt{2\omega^2-1})>0$ and $\cos(t\sqrt{2\omega^2-1})>0$. We now split the proof into two cases. 

    Case $t\in (0,\frac{\pi}{4|\omega|})$. Since $\sqrt{2\omega^2-1}\leq \omega$, we have $0<t\sqrt{2\omega^2-1}\leq 2t\omega<\frac{\pi}{2}$. The function $x \to \frac{\tan x}{x}$ is strictly increasing on $(0,\frac{\pi}{2})$. Hence, 
    $$ \frac{\tan(2\omega t)}{2\omega t} > \frac{\tan(t\sqrt{2\omega^2-1})}{t\sqrt{2\omega^2-1}}\quad \Rightarrow \quad \tan(2\omega t)> \frac{\omega}{\sqrt{2\omega^2-1}} \tan(t\sqrt{2\omega^2-1}). $$
    Therefore, $f_\omega^1(t)>0$.


    Case $t\in [\frac{\pi}{4|\omega|},\frac{\pi}{2|\omega|}]$. Here $\sin(2\omega t) >0$ and  $\cos(2\omega t) \leq 0$. Therefore, both terms in $f^1_\omega(t)$ are nonnegative, and at least one is strictly positive. Hence $f^1_\omega(t)>0$.  

    If $\omega = 1$, then $f^1_\omega(t) = \sin t$. Hence the first positive zero is $t_\omega^* = \pi$. 
\end{proof}

\subsubsection{Case $\omega = \pm \frac{1}{\sqrt{2}} $}

\begin{lemma}\label{lem:up-bound-2}
    Let $\omega \in  \{\pm \frac{1}{\sqrt{2}}\}$, and let $\gamma_\omega$ be the geodesic defined in Eq. \eqref{eq:def-gam-ome}. 
    The Maxwell times corresponding to the $\Gamma_\omega$-symmetry are precisely the positive solutions of
        \begin{equation*}
            \sin(2 \omega t) -  \omega t \cos( 2 \omega t) = 0.
        \end{equation*}
\end{lemma}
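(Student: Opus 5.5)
We follow the proof of Lemma \ref{lem:up-bound-1} verbatim, replacing the trigonometric formula for the geodesic with the parabolic one. By the symmetry $\omega\to-\omega$ (Remark \ref{rem:t-rev-prp} and the $\SO$-symmetry) it suffices to treat $\omega=\frac{1}{\sqrt{2}}$. Take the competitor $\gamma_1$ from the ansatz in Eq. \eqref{eq:ansatz}. Its initial velocity $\eta:=\dot\gamma_1(0)$ lies in $\Gamma_{-\omega}$, so $\det\eta = 2\omega^2-1=0$. Lemma \ref{lem:expl-form} in the case $\det(\xi)=0$ then gives
$$\gamma_\omega(t)=(\Id+t\xi_\omega^\T)\exp(2\omega t\xi_3),\qquad \gamma_1(t)=(\Id+t\eta^\T)\exp(-2\omega t\xi_3).$$

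We expand both with $\exp(\pm 2\omega t\xi_3)=\cos(2\omega t)\Id\pm\sin(2\omega t)\xi_3$, together with the products $\xi_1\xi_3=-\xi_2$, $\xi_2\xi_3=\xi_1$ and $\xi_3^2=-\Id$. The $\Id$-coefficients are $\cos(2\omega t)-\omega t\sin(2\omega t)$ for $\gamma_\omega$ and the same for $\gamma_1$, since $\xi_3^\T=-\xi_3$ and the signs of $\omega$ flip twice. They therefore cancel in the difference. The $\xi_3$-coefficient of $\gamma_\omega(t)-\gamma_1(t)$ is $2\big(\sin(2\omega t)-\omega t\cos(2\omega t)\big)=:2F^2(t)$.

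The $\xi_1,\xi_2$ components should factor, exactly as $\psi_\theta,\phi_\theta$ did, as
$$-2t\sqrt{1-\omega^2}\,\sin(\tfrac{\theta}{2})\sin\!\big(\tfrac12(4\omega t-\theta)\big)\qquad\text{and}\qquad -2t\sqrt{1-\omega^2}\,\cos(\tfrac{\theta}{2})\sin\!\big(\tfrac12(4\omega t-\theta)\big).$$
Here $t$ is the limit of $\sin(t\alpha)/\alpha$ as $\alpha\to0$. I would verify this factorization directly, using sum-to-product identities on the terms $\cos\theta$, $\sin\theta$ against $\cos(2\omega t)$, $\sin(2\omega t)$.

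Both directions then follow as in Lemma \ref{lem:up-bound-1}. Suppose $F^2(t_\omega)=0$ for some $t_\omega>0$. Choosing $\theta\equiv4\omega t_\omega\pmod{2\pi}$ kills the $\xi_1$ and $\xi_2$ components. Hence $\gamma_\omega(t_\omega)=\gamma_1(t_\omega)$. Moreover $\eta\neq\xi_\omega$ because their $\xi_3$-components $\pm\omega$ differ, so $t_\omega$ is a Maxwell time. Conversely, suppose a Maxwell point corresponding to the $\Gamma_\omega$-symmetry occurs at time $t$. Every element of $\Gamma_{-\omega}$ has the form of the ansatz for some $\theta$, so the difference above vanishes, and linear independence of $\xi_1,\xi_2,\xi_3$ forces $F^2(t)=0$. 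Existence of a positive zero is not required by the statement, but we would also record it. With $\omega=\frac{1}{\sqrt2}$, the intermediate value theorem applied to $F^2(\frac{\pi}{2\omega})=\frac{\pi}{2}>0$ and $F^2(\frac{\pi}{\omega})=-\pi<0$ gives a zero.

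The only real obstacle is the bookkeeping in the $\xi_1,\xi_2$ components, namely confirming that they share the factor $\sin\big(\tfrac12(4\omega t-\theta)\big)$. As a cross-check I would take the limit $\omega\to\frac{1}{\sqrt2}^+$ in Eq. \eqref{eq:cut-point-proof}. Dividing $F^1$ there gives the correct limit, because
$$\frac{\sin(t\sqrt{2\omega^2-1})}{\sqrt{2\omega^2-1}}\to t \qquad\text{and}\qquad \cos(t\sqrt{2\omega^2-1})\to1,$$
so $F^1\to F^2$ and $\psi_\theta,\phi_\theta$ converge to the expressions above. By continuity of geodesics in their initial data, this limit yields the identity at $\omega=\frac{1}{\sqrt2}$ without redoing the algebra.
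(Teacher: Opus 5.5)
Your proposal is correct and follows the paper's proof: the same ansatz in $\Gamma_{-\omega}$, the identity $\gamma_\omega(t)-\gamma_1(t)=2F^2\xi_3$ once $\theta\equiv 4\omega t \pmod{2\pi}$, the converse via the $\xi_3$-coefficient, and existence by the intermediate value theorem on $[\frac{\pi}{2\omega},\frac{\pi}{\omega}]$; your sum-to-product factorization of the $\xi_1,\xi_2$ components, which the paper leaves implicit, also checks out. One harmless slip: the common $\Id$-coefficient is $\cos(2\omega t)+\omega t\sin(2\omega t)$, because $-\omega t\,\xi_3\cdot\sin(2\omega t)\,\xi_3=+\omega t\sin(2\omega t)\Id$, and it still cancels in the difference.
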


\begin{proof}
     Using the same strategy as in the proof of Lemma \ref{lem:up-bound-1}, we consider the following: 
    \begin{equation}\label{eq:def-g-function}
     \begin{split}
           F^2(t,\omega) \coloneqq  \sin(2 \omega  t) - \omega t \cos(2 \omega  t) .
     \end{split}
    \end{equation}
     Let $\gamma_\omega(t)$ be the geodesic defined in Eq. \eqref{eq:def-gam-ome}, and let $\gamma_2(t)$ be the geodesic  with initial conditions
    $$\gamma_2(0) = \Id, \quad\text{and}\quad \dot{\gamma}_2(0) = |\omega|\big(\cos(4 \omega t_\omega) \xi_1 - \sin(4 \omega t_\omega) \xi_2\big) - \omega \xi_3 . $$
    Since $\omega\neq 0$, these geodesics have distinct initial velocities. We show that the condition  $F^2(t_\omega,\omega) = 0$ implies $\gamma_\omega(t_\omega) = \gamma_2(t_\omega)$. Indeed,  using the explicit formulas provided by Lemma \ref{lem:expl-form} to compute  the matrices $\gamma_\omega(t_\omega)$ and $\gamma_2(t_\omega)$, we find that
    \begin{equation*}
    \begin{split}
      \gamma_\omega(t_\omega) -\gamma_2(t_\omega) = 2 F^2(t_\omega,\omega) \xi_3.
    \end{split}
    \end{equation*}
    Conversely, suppose that $t>0$ is a Maxwell time corresponding to the $\Gamma_\omega$-symmetry. By Lemma \ref{lem:expl-form}, the coefficient of $\xi_3$ in the difference between $\gamma_\omega$ and any geodesic with initial velocity in $\Gamma_{-\omega}$ is $2F^2(t,\omega)$. Since the geodesics intersect at time $t$, this coefficient must vanish. Therefore, $F^2(t,\omega)=0$.

    Finally, we show that the equation $F^2(t_\omega,\omega) = 0$ admits a positive solution. By the symmetry $F^2(t,-\omega)= - F^2(t,\omega)$, it suffices to consider $\omega = \frac{1}{\sqrt{2}}$. Define $f^2_\omega(t) \coloneqq F^2(t,\omega)$. In this case,
    $$f^2_\omega\bigg(\frac{\pi}{\sqrt{2}}\bigg) = \frac{\pi}{2}\;\; \text{and} \;\;f^2_\omega(\sqrt{2} \pi) = -\pi.$$
    Hence, the intermediate value theorem guarantees a solution $t_\omega\in\left(\frac{\pi}{2\omega},\frac{\pi}{\omega}\right)$.
\end{proof}    

\begin{lemma}\label{lem:fir-max-tim-2}
    For each $\omega \in  \{\pm \frac{1}{\sqrt{2}}\}$, the first positive zero $t_\omega^*$ of   $f^2_\omega$ satisfies $t_\omega^* \in (\frac{\pi}{2|\omega|},\frac{\pi}{|\omega|})$. In particular, $t^*_\omega = \Tmax(\omega)$.
\end{lemma}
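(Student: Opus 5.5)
By the symmetry $F^2(t,-\omega)=-F^2(t,\omega)$, it suffices to treat $\omega=\frac{1}{\sqrt{2}}$. In this case
\[
f^2_\omega(t)=\sin(\sqrt{2}\,t)-\tfrac{t}{\sqrt{2}}\cos(\sqrt{2}\,t).
\]
We substitute $x=\sqrt{2}\,t$, which maps $\big(0,\frac{\pi}{|\omega|}\big)=(0,\sqrt{2}\pi)$ onto $(0,2\pi)$. The function becomes
\[
h(x)=\sin x-\tfrac{x}{2}\cos x .
\]
The plan is to show that $h>0$ on $(0,\pi]$, and that $h$ has a zero in $(\pi,2\pi)$. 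Lemma \ref{lem:up-bound-2} already supplies that zero, since $f^2_\omega(\sqrt{2}\pi)=-\pi<0$, and the value $h(2\pi)=-\pi\neq 0$ excludes the right endpoint. It therefore remains only to exclude zeros with $x\le \pi$, i.e.\ $t\le \frac{\pi}{2|\omega|}$. This is done in the same two-case style as Lemma \ref{lem:fir-max-tim-1}.

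\textbf{Case $x\in(0,\frac{\pi}{2})$.} Here $\cos x>0$, so $h(x)>0$ is equivalent to $\tan x> \frac{x}{2}$. This follows from the standard inequality $\tan x>x>\frac{x}{2}$ on this interval.

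\textbf{Case $x\in[\frac{\pi}{2},\pi]$.} Here $\sin x\ge 0$ and $-\frac{x}{2}\cos x\ge 0$, so both terms of $h$ are nonnegative. They cannot vanish simultaneously: $\sin x=0$ only at $x=\pi$, where $-\frac{x}{2}\cos x=\frac{\pi}{2}>0$. Hence $h>0$, and in particular $h(\pi)=\frac{\pi}{2}$, matching $f^2_\omega(\frac{\pi}{\sqrt{2}})=\frac{\pi}{2}$.

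Combining the two cases, the first positive zero $t^*_\omega$ of $f^2_\omega$ lies in $\big(\frac{\pi}{2|\omega|},\frac{\pi}{|\omega|}\big)$. By Lemma \ref{lem:up-bound-2}, the Maxwell times for the $\Gamma_\omega$-symmetry are exactly the positive zeros of $F^2(\cdot,\omega)$, so the first Maxwell time equals $t^*_\omega$, i.e.\ $t^*_\omega=\Tmax(\omega)$. The case $\omega=-\frac{1}{\sqrt{2}}$ follows from the sign symmetry, or equivalently from Remark \ref{rem:t-rev-prp}.

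There is no real obstacle here. The only point needing care is the first case, and the bound $\tan x>x$ is standard.
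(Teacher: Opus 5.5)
Your proof is correct and follows the paper's argument essentially step for step. You reduce to $\omega=\frac{1}{\sqrt{2}}$ by the sign symmetry, split at $2\omega t=\frac{\pi}{2}$, use $\tan x>x$ on the first piece and the nonnegativity of both terms on the second, and take the zero in $\bigl(\frac{\pi}{2|\omega|},\frac{\pi}{|\omega|}\bigr)$ from the intermediate value computation in Lemma \ref{lem:up-bound-2}. The substitution $x=\sqrt{2}\,t$ is cosmetic, and your endpoint handling (open first case, closed second case including $x=\pi$) is slightly cleaner than the paper's, whose first case includes the point where $\cos(2\omega t)=0$ and $\tan$ is undefined.
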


\begin{proof}
    By the symmetry $F^2(t,-\omega)= - F^2(t,\omega)$, it suffices to consider $\omega= \frac{1}{\sqrt{2}}$. We split the interval into two cases.
    
    Case $t\in (0,\frac{\pi}{4\omega}]$. Since $\tan x>x$ for $x\in (0,\frac{\pi}{2})$, it follows that $\tan(2\omega t) > \omega t$. Multiplying by $\cos(2\omega t)>0$, we obtain $\sin(2\omega t) > \omega t \cos(2\omega t)$. Therefore, $f_\omega^2(t)>0$. 

    Case $t\in (\frac{\pi}{4\omega},\frac{\pi}{2\omega})$. Here $\sin(2\omega t) >0$ and  $\cos(2\omega t) \leq 0$. Therefore, both terms in $f^2_\omega(t)$ are nonnegative, and at least one is strictly positive. Hence $f^2_\omega(t)>0$.
\end{proof}

\subsubsection{Case $\omega \in (-\frac{1}{\sqrt{2}},\frac{1}{\sqrt{2}}) $}

\begin{lemma}\label{lem:up-bound-3}
     Let $\omega \in (-\frac{1}{\sqrt{2}},0) \cup (0,\frac{1}{\sqrt{2}})$, and let $\gamma_\omega$ be the geodesic defined in Eq. \eqref{eq:def-gam-ome}. The Maxwell times corresponding to the $\Gamma_\omega$-symmetry are precisely the positive solutions of
        \begin{equation*}
            \cosh(t\sqrt{1-2\omega^2}) \sin(2\omega t)  -\frac{\omega}{\sqrt{1-2\omega^2}} \sinh(t\sqrt{1-2\omega^2})\cos(2\omega t) = 0.
        \end{equation*}
\end{lemma}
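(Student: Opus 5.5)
The plan is to mirror the proof of Lemma \ref{lem:up-bound-1}, replacing trigonometric functions of $t\sqrt{2\omega^2-1}$ by hyperbolic functions of $t\alpha$, where $\alpha\coloneqq\sqrt{1-2\omega^2}$. We take the ansatz in Eq. \eqref{eq:ansatz} for $\gamma_1$ with angle $\theta$. By Lemma \ref{lem:expl-form} (case $\det(\xi)<0$), each geodesic has the form
\[
\Big(\cosh(t\alpha)\Id+\tfrac{\sinh(t\alpha)}{\alpha}\xi^\T\Big)\exp(2\omega' t\xi_3), \qquad \omega'=\pm\omega .
\]
We also use $\exp(\pm 2\omega t\xi_3)=\cos(2\omega t)\Id\pm\sin(2\omega t)\xi_3$ and the products $\xi_1\xi_3=-\xi_2$, $\xi_2\xi_3=\xi_1$, $\xi_3^2=-\Id$.

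Since both initial velocities have determinant $2\omega^2-1$, the $\Id$-coefficients of $\gamma_\omega(t)$ and $\gamma_1(t)$ agree. These coefficients are $\cosh(t\alpha)\cos(2\omega t)+\frac{\omega}{\alpha}\sinh(t\alpha)\sin(2\omega t)$, which are even in $\omega$. Expanding the difference, we expect
\[
\gamma_\omega(t)-\gamma_1(t)=\psi_\theta(t)\xi_1+\phi_\theta(t)\xi_2+2F^3(t,\omega)\xi_3,
\]
where
\[
F^3(t,\omega)\coloneqq\cosh(t\alpha)\sin(2\omega t)-\frac{\omega}{\alpha}\sinh(t\alpha)\cos(2\omega t).
\]
The $\xi_3$-coefficient comes from the terms odd in $\omega$. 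The symmetric-part coefficients $\psi_\theta,\phi_\theta$ should be the analogues of those in Lemma \ref{lem:up-bound-1}, namely
\[
-\frac{2\sqrt{1-\omega^2}}{\alpha}\,\sinh(t\alpha)\,\sin\!\big(\tfrac{\theta}{2}\big)\sin\!\big(\tfrac{1}{2}(4\omega t-\theta)\big),
\]
and the same expression with $\cos(\theta/2)$ in place of $\sin(\theta/2)$. The main obstacle is this bookkeeping. In particular, we must confirm that the $\xi_1,\xi_2$ coefficients factor through $\sin\big(\tfrac12(4\omega t-\theta)\big)$, which comes from rotating $\cos\theta\,\xi_1-\sin\theta\,\xi_2$ against $\xi_1$ by the phase $4\omega t$ produced by $\xi_1\exp(2\omega t\xi_3)$ versus $\exp(-2\omega t\xi_3)$. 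To verify it, we would compute the symmetric part directly.

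With the identity in hand, both directions are immediate. If $F^3(t_\omega,\omega)=0$ with $t_\omega>0$, choose $\theta\equiv4\omega t_\omega \pmod{2\pi}$. Then $\psi_\theta(t_\omega)=\phi_\theta(t_\omega)=0$, so $\gamma_\omega(t_\omega)=\gamma_1(t_\omega)$. Since $\omega\neq0$, the $\xi_3$-components $\pm\omega$ differ, so the initial velocities are distinct and this is a Maxwell point. Conversely, if $\gamma_\omega(t)=\gamma_1(t)$ for some $\gamma_1$ with initial velocity in $\Gamma_{-\omega}$, that velocity has the form in Eq. \eqref{eq:ansatz} for some $\theta$. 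Linear independence of $\xi_1,\xi_2,\xi_3$ then forces $F^3(t,\omega)=0$.

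Finally, for existence of a positive zero we use $F^3(t,-\omega)=-F^3(t,\omega)$ to reduce to $\omega\in(0,\tfrac1{\sqrt2})$, and apply the intermediate value theorem to $f^3_\omega\coloneqq F^3(\cdot,\omega)$ on $[\tfrac{\pi}{2\omega},\tfrac{\pi}{\omega}]$. We have
\[
f^3_\omega\Big(\frac{\pi}{2\omega}\Big)=\frac{\omega}{\alpha}\sinh\Big(\frac{\pi\alpha}{2\omega}\Big)>0,
\qquad
f^3_\omega\Big(\frac{\pi}{\omega}\Big)=-\frac{\omega}{\alpha}\sinh\Big(\frac{\pi\alpha}{\omega}\Big)<0 .
\]
Hence a zero exists in $(\tfrac{\pi}{2\omega},\tfrac{\pi}{\omega})$.
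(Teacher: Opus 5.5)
Your proposal is correct and follows essentially the same route as the paper. Both use the ansatz with $\theta\equiv 4\omega t_\omega \pmod{2\pi}$, decompose $\gamma_\omega(t)-\gamma_1(t)$ so that its $\xi_3$-coefficient is $2F^3(t,\omega)$, use linear independence for the converse, and apply the intermediate value theorem on $[\frac{\pi}{2\omega},\frac{\pi}{\omega}]$. The bookkeeping you flagged does check out: the $\xi_1$- and $\xi_2$-coefficients are exactly $-\frac{2\sqrt{1-\omega^2}}{\alpha}\sinh(t\alpha)\sin(\frac{\theta}{2})\sin(\frac{1}{2}(4\omega t-\theta))$ and its $\cos(\frac{\theta}{2})$ analogue, so you can state the identity outright rather than as an expectation.
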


\begin{proof}
     We again follow the same strategy as in the proof of Lemma \ref{lem:up-bound-1}. Consider the following function
    \begin{equation}\label{eq:def-h-function}
     \begin{split}
           F^3(t,\omega) \coloneqq \cosh(t\sqrt{1-2\omega^2}) \sin(2\omega t)  -\frac{\omega}{\sqrt{1-2\omega^2}} \sinh(t\sqrt{1-2\omega^2})\cos(2\omega t).
     \end{split}
    \end{equation}
    Let $\gamma_\omega(t)$ be the geodesic defined in Eq. \eqref{eq:def-gam-ome}, and let $\gamma_3(t)$ be the geodesic with initial condition 
    $$\gamma_3(0) = \Id, \quad\text{and}\quad \dot{\gamma}_3(0) = \sqrt{1-\omega^2}\big(\cos(4\omega t_\omega) \xi_1 - \sin(4\omega t_\omega) \xi_2\big) - \omega\xi_3. $$
   We must show that the condition $F^3(t_\omega,\omega) = 0$ implies $\gamma_\omega(t_\omega) = \gamma_3(t_\omega)$. Indeed, using the explicit formulas provided by Lemma  \ref{lem:expl-form} to compute  the matrices $\gamma_\omega(t_\omega)$ and $\gamma_3(t_\omega)$, we find that
    \begin{equation*}
    \begin{split}
      \gamma_\omega(t_\omega) -\gamma_3(t_\omega) = 2F^3(t_\omega,\omega) \xi_3
    \end{split}
    \end{equation*}
    Conversely, suppose that $t>0$ is a Maxwell time corresponding to the $\Gamma_\omega$-symmetry. By Lemma \ref{lem:expl-form}, the coefficient of $\xi_3$ in the difference between $\gamma_\omega$ and any geodesic with initial velocity in $\Gamma_{-\omega}$ is $2F^3(t,\omega)$. Since the geodesics intersect at time $t$, this coefficient must vanish. Therefore, $F^3(t,\omega)=0$.

    Finally, we show that the equation $F^3(t_\omega,\omega) = 0$ admits a positive solution. By the symmetry $F^3(t,-\omega)= - F^3(t,\omega)$, it suffices to consider $\omega \in (0, \frac{1}{\sqrt{2}})$. Define $f^3_\omega(t)\coloneqq F^3(t,\omega)$. In this case,
    $$ F^3_\omega(\frac{\pi}{2\omega})>0\;\; \text{and}\;\;F^3_\omega(\frac{\pi}{\omega})<0 .$$
    Hence, the Intermediate Value Theorem guarantees a solution $t_\omega\in(\frac{\pi}{2\omega},\frac{\pi}{\omega})$.
\end{proof}

\begin{lemma}\label{lem:fir-max-tim-3}
    For each $\omega\in (-\frac{1}{\sqrt{2}},0) \cup(0,\frac{1}{\sqrt{2}})$, the first positive zero $t_\omega^*$ of  $f^3_\omega$ satisfies $t_\omega^* \in (\frac{\pi}{2|\omega|},\frac{\pi}{|\omega|})$.    In particular, $t^*_\omega = \Tmax(\omega)$.
\end{lemma}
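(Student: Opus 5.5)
We follow the pattern of Lemmas \ref{lem:fir-max-tim-1} and \ref{lem:fir-max-tim-2}. The symmetry $F^3(t,-\omega)=-F^3(t,\omega)$ reduces us to $\omega\in(0,\frac{1}{\sqrt{2}})$. Set $\alpha=\alpha(\omega)=\sqrt{1-2\omega^2}$ and $f^3_\omega(t)=F^3(t,\omega)$. Lemma \ref{lem:up-bound-3} already gives a zero of $f^3_\omega$ in $(\frac{\pi}{2\omega},\frac{\pi}{\omega})$. It therefore suffices to show that $f^3_\omega(t)>0$ for all $t\in(0,\frac{\pi}{2\omega}]$. Then the first positive zero $t^*_\omega$ lies in $(\frac{\pi}{2\omega},\frac{\pi}{\omega})$, and by Lemma \ref{lem:up-bound-3} it equals $\Tmax(\omega)$.

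We split $(0,\frac{\pi}{2\omega}]$ into two pieces.

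\textbf{Case $t\in[\frac{\pi}{4\omega},\frac{\pi}{2\omega}]$.} Here $\sin(2\omega t)\ge0$ and $\cos(2\omega t)\le 0$, while $\cosh(\alpha t)>0$ and $\sinh(\alpha t)>0$. Hence both terms $\cosh(\alpha t)\sin(2\omega t)$ and $-\frac{\omega}{\alpha}\sinh(\alpha t)\cos(2\omega t)$ are nonnegative. They cannot vanish simultaneously, because $\sin(2\omega t)$ and $\cos(2\omega t)$ have no common zero. So $f^3_\omega(t)>0$.

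\textbf{Case $t\in(0,\frac{\pi}{4\omega})$.} Now $\cos(2\omega t)>0$ and $\cosh(\alpha t)>0$. Dividing by $\cos(2\omega t)\cosh(\alpha t)$, positivity becomes
\begin{equation*}
\tan(2\omega t) > \frac{\omega}{\alpha}\tanh(\alpha t),
\end{equation*}
which after dividing by $2\omega t$ reads
\begin{equation*}
\frac{\tan(2\omega t)}{2\omega t} > \frac{1}{2}\cdot\frac{\tanh(\alpha t)}{\alpha t}.
\end{equation*}
The left side exceeds $1$ because $\tan x>x$ on $(0,\frac{\pi}{2})$. The right side is less than $\frac{1}{2}$ because $\tanh y<y$ for $y>0$. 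So the inequality holds, and it holds strictly.

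Unlike the case $\omega>\frac{1}{\sqrt{2}}$, no comparison between $\alpha$ and $2\omega$ is needed: the crude bounds $\tan x>x$ and $\tanh y<y$ already suffice. The only point needing care is to check that the two cases together cover $(0,\frac{\pi}{2\omega}]$, including the endpoint. At $t=\frac{\pi}{2\omega}$ we have $\sin(2\omega t)=0$ and $\cos(2\omega t)=-1$, so $f^3_\omega=\frac{\omega}{\alpha}\sinh(\alpha t)>0$; the interval for the zero is therefore open on the left. For the right endpoint, evaluating at $\frac{\pi}{\omega}$ gives $f^3_\omega=-\frac{\omega}{\alpha}\sinh(\frac{\alpha\pi}{\omega})<0$, which confirms that $t^*_\omega<\frac{\pi}{\omega}$ strictly. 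I do not expect any step to be a real obstacle.
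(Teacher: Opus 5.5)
Your proof is correct and follows essentially the same route as the paper: reduce to $\omega\in(0,\frac{1}{\sqrt{2}})$ by the odd symmetry in $\omega$, split $(0,\frac{\pi}{2\omega}]$ at $\frac{\pi}{4\omega}$, and use $\tan x>x$ and $\tanh y<y$ on the first piece and a sign argument on the second. Your use of $\sin(2\omega t)\ge 0$ on the second piece, together with the explicit endpoint evaluations, correctly covers $t=\frac{\pi}{2\omega}$, where the paper's claim $\sin(2\omega t)>0$ is slightly inaccurate, though its conclusion still holds.
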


\begin{proof}
    First observe that $F^3(t,-\omega)= - F^3(t,\omega)$. Thus, it suffices to focus on the case  $\omega\in(0,\frac{1}{\sqrt{2}})$. We now split the proof into two cases.

    Case $t\in (0,\frac{\pi}{4\omega})$. In this interval $0<2\omega t< \frac{\pi}{2}$. Therefore, $\sin(2\omega t) > 0$ and $\cos(2\omega t) > 0$. Also, $\cosh(t\sqrt{1-2\omega^2})>0$ and $\sinh(t\sqrt{1-2\omega^2})>0$. Then $F^3_\omega(t) > 0$ is equivalent to 
    $$ \tan(2\omega t) > \frac{\omega}{\sqrt{1-2\omega^2}}\tanh(t\sqrt{1-2\omega^2}).$$
    The inequality $0<2\omega t < \frac{\pi}{2}$ implies $\tan(2\omega t) > 2\omega t$, as well as $\tanh(t\sqrt{1-2\omega^2}) < t\sqrt{1-2\omega^2}$. Therefore,
    $$ \tan(2\omega t) > \omega t > \frac{\omega}{\sqrt{1-2\omega^2}}\tanh(t\sqrt{1-2\omega^2}).   $$
    Hence $f^3_\omega(t) > 0$ for $t \in (0,\frac{\pi}{4\omega})$.

    Case $t \in [\frac{\pi}{4\omega},\frac{\pi}{2\omega}]$. Here $\sin(2\omega t) >0$ and  $\cos(2\omega t) \leq 0$. Therefore, both terms in $f^3_\omega(t)$ are nonnegative, and at least one is strictly positive. Hence $f^3_\omega(t)>0$.
\end{proof}


\begin{proof}[Proof of Theorem \ref{thm:Max-time}] We consider separately the cases  $\omega \in [-1,-\frac{1}{\sqrt{2}})\cup(\frac{1}{\sqrt{2}},1]$, $\omega = \pm \frac{1}{\sqrt{2}}$ and $\omega \in (-\frac{1}{\sqrt{2}},0)\cup (0,\frac{1}{\sqrt{2}})$.  

\textbf{Case 1}: $\omega \in [-1,-\frac{1}{\sqrt{2}})\cup(\frac{1}{\sqrt{2}},1]$). Consider the geodesic $\gamma_\omega(t)$ defined in Eq. \eqref{eq:def-gam-ome}. Lemma \ref{lem:up-bound-1} implies that the Maxwell times corresponding to $\Gamma_\omega$-symmetry are precisely the positive zeros of $ f_\omega^1$. By Lemma \ref{lem:fir-max-tim-1}, its first positive zero $t_\omega^*$ satisfies $t_\omega^* \in (\frac{\pi}{2|\omega|},\frac{\pi}{|\omega|}]$, with equality at the upper endpoint if and only if \(\omega=\pm1\). By the $\SO$-symmetry and Proposition \ref{prp:iso-action}, all geodesics with initial velocities in $\Gamma_\omega$ have the same first Maxwell time. Consequently, $\Tmax(\omega) = t_\omega^*$. 

The remaining two cases follow by the same argument, using Lemmas \ref{lem:up-bound-2}-\ref{lem:fir-max-tim-3}. 
\end{proof}

\begin{proof}[Proof of Corollary \ref{cor:t-max-t-con}]
    The result is immediate for $\omega \in [-\frac{1}{\sqrt{2}},0)\cup (0,\frac{1}{\sqrt{2}}]$, since $\Tcon (\omega) = \infty$. Thus, it suffices to consider the case $\omega \in [-1,-\frac{1}{\sqrt{2}}) \cup (\frac{1}{\sqrt{2}},1]$. If $\omega \in (-1,-\frac{1}{\sqrt{2}}) \cup (\frac{1}{\sqrt{2}},1)$, then
    $$\Tmax(\omega) <  \frac{\pi}{|\omega|} < \frac{\pi}{\sqrt{2\omega^2-1}} = \Tcon (\omega) .$$
    If $\omega = \pm 1$, then $\Tmax(\omega) = \Tcon (\omega)$.
\end{proof}

\subsection{Regularity of the First Maxwell Time}\label{subsec:reg-fir-max}

\begin{lemma}\label{lem:dt-max-dome-1}
For $\omega\in (\frac{1}{\sqrt{2}},1)$, let $\Tmax(\omega)$ denote the first positive solution of $F^1(t,\omega) = 0$. Then the following statements hold.
    
    If $\Tmax(\omega)\sqrt{2\omega^2-1}\neq \frac{\pi}{2}$, then
\begin{equation}\label{eq:dt-mx-domega-1}
    \frac{d}{d\omega}\Tmax
=
\frac{
2\sqrt{2\omega^2-1}(1-\omega^2)\Tmax(\omega)
-
\tan\!\left(\Tmax(\omega)\sqrt{2\omega^2-1}\right)
}{
\omega\sqrt{2\omega^2-1}
\left[
2\omega^2-1+
\tan^2\!\left(\Tmax(\omega)\sqrt{2\omega^2-1}\right)
\right]
}.
\end{equation}

If $\Tmax(\omega)\sqrt{2\omega^2-1} = \frac{\pi}{2}$, then
\begin{equation}\label{eq:dt-mx-domega-1-1}
    \frac{d}{d\omega}\Tmax
= 0. 
\end{equation}
\end{lemma}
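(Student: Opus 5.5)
The plan is to apply the implicit function theorem to $F^1(t,\omega)=0$ at $t=\Tmax(\omega)$, and to simplify the resulting quotient $-\partial_\omega F^1/\partial_t F^1$ using the relation $F^1=0$ itself.

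\textbf{Notation and partial derivatives.} Set $\alpha=\sqrt{2\omega^2-1}$ and abbreviate
$$c=\cos(\alpha t),\quad s=\sin(\alpha t),\quad C=\cos(2\omega t),\quad S=\sin(2\omega t),$$
so that $F^1=cS-\frac{\omega}{\alpha}sC$. A direct differentiation, using $\partial_\omega\alpha=2\omega/\alpha$, $\partial_\omega(\omega/\alpha)=-1/\alpha^3$ and $2\omega^2-\alpha^2=1$, gives
$$\partial_tF^1=\omega cC+\frac{sS}{\alpha},\qquad \partial_\omega F^1=C\Big(\frac{s}{\alpha^3}-\frac{2t(1-\omega^2)c}{\alpha^2}\Big).$$
The two $sS$-terms in $\partial_\omega F^1$ cancel exactly, which is what makes the final formula compact.

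\textbf{Case analysis at a zero with $t=\Tmax(\omega)$.} First note that $C=0$ forces $S=\pm1$, and then $F^1=0$ forces $c=0$. So either $c\neq 0\neq C$, or $c=0$.

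\emph{Case $c\neq0$.} Put $\tau=\tan(\alpha t)$. The relation $F^1=0$ reads $S=\frac{\omega}{\alpha}\tau C$. Substituting this into $\partial_tF^1$ and using $s\tau=c\tau^2$ gives
$$\partial_tF^1=\frac{\omega cC(\alpha^2+\tau^2)}{\alpha^2}\neq0,$$
and
$$\partial_\omega F^1=\frac{cC\big(\tau-2t(1-\omega^2)\alpha\big)}{\alpha^3}.$$
The quotient $-\partial_\omega F^1/\partial_tF^1$ then simplifies to the right-hand side of Eq.~\eqref{eq:dt-mx-domega-1}.

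\emph{Case $c=0$, i.e. $\alpha\Tmax=\pi/2$.} Then $s=\pm1$, and hence $C=0$ and $S=\pm1$. This gives $\partial_tF^1=sS/\alpha\neq0$ and $\partial_\omega F^1=0$, which yields Eq.~\eqref{eq:dt-mx-domega-1-1}.

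In both cases $\partial_tF^1\neq0$, so the zero $\Tmax(\omega)$ is simple. The implicit function theorem then provides a smooth local branch $T(\omega')$ of zeros with $T(\omega)=\Tmax(\omega)$.

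\textbf{Main obstacle: the branch is the first zero.} It remains to show $T(\omega')=\Tmax(\omega')$ for $\omega'$ near $\omega$, so that the derivative computed is really that of $\Tmax$. The proof of Lemma~\ref{lem:fir-max-tim-1} shows $F^1(t,\omega')>0$ on $(0,\frac{\pi}{2\omega'}]$, uniformly for $\omega'$ in a compact neighbourhood of $\omega$ inside $(\frac1{\sqrt2},1)$. On the compact set $[\frac{\pi}{2\omega},\Tmax(\omega)-\delta]$ the function $F^1(\cdot,\omega)$ is bounded below by a positive constant, and by joint continuity so is $F^1(\cdot,\omega')$ for $\omega'$ close to $\omega$. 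Near $\Tmax(\omega)$ the implicit function theorem guarantees that the only zero is $T(\omega')$. Hence the first positive zero of $F^1(\cdot,\omega')$ is $T(\omega')$, and $\Tmax$ is differentiable with the stated derivative.
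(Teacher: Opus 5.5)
Your proposal is correct and is essentially the paper's argument: implicit differentiation of $F^1=0$, simplified using the constraint itself, with the degenerate point $\alpha\Tmax=\pi/2$ handled by evaluating $\partial_tF^1$ and $\partial_\omega F^1$ directly. The paper instead passes to $G^1=\tan(2\omega t)-\frac{\omega}{\alpha}\tan(\alpha t)=F^1/(cC)$ in the generic case, whereas your general formulas for $\partial_tF^1$ and $\partial_\omega F^1$ (with the $sS$ cancellation) cover both cases at once; your last paragraph, which identifies the implicit-function branch with the first zero, supplies a step the paper leaves implicit, though you should start the compact interval slightly to the left of $\pi/(2\omega)$ so that it still overlaps $(0,\pi/(2\omega')]$ when $\omega'>\omega$.
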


\begin{proof}
    We begin by considering $\Tmax(\omega)\sqrt{2\omega^2-1}\neq \frac{\pi}{2}$. If $\cos(2\omega t) = 0$, then we have $\cos(\Tmax(\omega)\sqrt{2\omega^2-1}) = 0$. So under the stated assumption, both cosine factors are nonzero, and the equation $F^1(\Tmax(\omega),\omega) = 0$ is equivalent to $G^1(\Tmax(\omega),\omega) = 0$ where
    $$ G^1 (t,\omega) \coloneqq  \tan(2\omega t)-\frac{\omega}{\sqrt{2\omega^2-1}}\tan(t\sqrt{2\omega^2-1}). $$
    Computing the partial derivatives, we obtain
    \begin{equation*}
        \begin{split}
      G^1_t & = \omega\big(2\sec^2(2\omega t) - \sec^2(t\sqrt{2\omega^2-1})\big),\\
      G^1_\omega  & = 2t\sec^2(2\omega t) + \frac{1}{(2\omega^2-1)^{\frac{3}{2}}}\tan(t\sqrt{2\omega^2-1}) - \frac{2\omega^2t}{2\omega^2-1}  \sec^2(t\sqrt{2\omega^2-1}).
        \end{split}
    \end{equation*}
    Using the condition $G^1(\Tmax(\omega),\omega) = 0$ and the identity \(\sec^2(x)=1+\tan^2(x)\), then
    \begin{equation*}
        \begin{split}
        \sec^2(\Tmax(\omega)\sqrt{2\omega^2-1}) &= 1+\tan^2(\Tmax(\omega)\sqrt{2\omega^2-1}), \\
        \sec^2(2\omega \Tmax(\omega)) & = 1 + \frac{\omega^2}{2\omega^2-1}\tan^2(\Tmax(\omega)\sqrt{2\omega^2-1}).
        \end{split}
    \end{equation*}
    Thus
    \begin{equation*}
        \begin{split}
            G^1_t & = \frac{\omega}{2\omega^2-1} \big( 2\omega^2-1+\tan^2(\Tmax(\omega)\sqrt{2\omega^2-1})\big)>0,\\
            G^1_\omega & = -\frac{2(1-\omega^2)\Tmax(\omega)}{2\omega^2-1} + \frac{1}{(2\omega^2-1)^{\frac{3}{2}}}\tan(\Tmax(\omega)\sqrt{2\omega^2-1}).
        \end{split}
    \end{equation*}
    Therefore, implicit differentiation yields $\frac{d}{d\omega}\Tmax = - \frac{G^1_\omega}{G^1_t}$ which gives Eq. \eqref{eq:dt-mx-domega-1}. 

     We now prove Eq. \eqref{eq:dt-mx-domega-1-1}. If $\Tmax(\omega)\sqrt{2\omega^2-1}= \frac{\pi}{2}$, then $F^1(\Tmax(\omega),\omega) = 0$ is equivalent to 
     $$ -\frac{\omega}{\sqrt{2\omega^2-1}}  \cos(2\omega \Tmax(\omega)) = 0,$$
     and therefore $\cos(2\omega \Tmax(\omega)) = 0$. Hence $\sin(2\omega \Tmax(\omega)) = \pm 1$. We now evaluate the partial derivatives of $F^1$ at this point. 
     \begin{equation*}
        \begin{split}
      F^1_t  & = \sin(2\omega \Tmax(\omega)) \big(-\sqrt{2\omega^2-1} + \frac{2\omega^2}{\sqrt{2\omega^2-1}}\big) = \pm\frac{1}{\sqrt{2\omega^2-1}} \neq 0,\\
      F^1_\omega & = \sin(2\omega \Tmax(\omega))\big(-\Tmax(\omega) \frac{\partial}{\partial \omega }\sqrt{2\omega^2-1} + \frac{2\omega \Tmax(\omega)}{\sqrt{2\omega^2-1}}\big) =0 .
        \end{split}
    \end{equation*}
    Therefore, implicit differentiation yields $\frac{d}{d\omega}\Tmax = - \frac{F^1_\omega}{F^1_t}$ which gives Eq. \eqref{eq:dt-mx-domega-1-1}.
\end{proof}

\begin{Prop}\label{prp:dt-dome-neg-1}
    The function $\Tmax$ has a unique minimum on $(\frac{1}{\sqrt{2}},1)$ attained at $\omega = \frac{3}{\sqrt{14}}$ with $\Tmax(\frac{3}{\sqrt{14}}) = \frac{\pi\sqrt{14}}{4}$.
\end{Prop}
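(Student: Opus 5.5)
The plan is to find the critical points of $\Tmax$ on $(\frac{1}{\sqrt{2}},1)$ from Lemma \ref{lem:dt-max-dome-1}, show there is exactly one, and read off its value. The notation is $\alpha=\sqrt{2\omega^2-1}$, $T=\Tmax(\omega)$ and $x=T\alpha$. By Lemma \ref{lem:fir-max-tim-1}, $T\in(\frac{\pi}{2\omega},\frac{\pi}{\omega})$. Hence $x\in(0,\pi)$, because $\alpha<\omega$.

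\textbf{Critical points.} Two types can occur.

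\emph{Type (a):} $x=\frac{\pi}{2}$. Then Eq. \eqref{eq:dt-mx-domega-1-1} gives derivative zero, and $F^1=0$ forces $\cos(2\omega T)=0$. The only admissible value in $(\frac{\pi}{2\omega},\frac{\pi}{\omega})$ is $2\omega T=\frac{3\pi}{2}$, so $T=\frac{3\pi}{4\omega}$. Combining this with $T\alpha=\frac{\pi}{2}$ gives $\frac{3\alpha}{4\omega}=\frac12$. So $9(2\omega^2-1)=4\omega^2$, hence $\omega^2=\frac{9}{14}$ and $\omega=\frac{3}{\sqrt{14}}$. Then $T=\frac{3\pi\sqrt{14}}{12}=\frac{\pi\sqrt{14}}{4}$. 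Before counting this point, I will check that $\frac{3\pi}{4\omega}$ really is the \emph{first} zero; this follows from Lemma \ref{lem:fir-max-tim-1} once $F^1>0$ on $(\frac{\pi}{2\omega},\frac{3\pi}{4\omega})$ is known, and that is part of the sign analysis below.

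\emph{Type (b):} $x\neq\frac{\pi}{2}$. Here Eq. \eqref{eq:dt-mx-domega-1} vanishes only if $\tan x=2(1-\omega^2)x$.
\begin{itemize}
\item For $x\in(0,\frac{\pi}{2})$ this is impossible, since $\tan x>x>2(1-\omega^2)x$ because $2(1-\omega^2)<1$.
\item For $x\in(\frac{\pi}{2},\pi)$ it is also impossible, since $\tan x<0<2(1-\omega^2)x$.
\end{itemize}
So type (a) gives the only critical point.

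\textbf{Sign of the derivative.} The denominator in Eq. \eqref{eq:dt-mx-domega-1} is positive. The numerator is $2\alpha(1-\omega^2)T-\tan x=\alpha\,(2(1-\omega^2)x-\tan x)/x\cdot T$, up to a positive factor. So $\frac{d}{d\omega}\Tmax<0$ when $x<\frac{\pi}{2}$ and $\frac{d}{d\omega}\Tmax>0$ when $x>\frac{\pi}{2}$.

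It remains to show that $x(\omega)=\Tmax(\omega)\alpha(\omega)$ crosses $\frac{\pi}{2}$ exactly once, from below. The endpoint behaviour is:
\begin{itemize}
\item As $\omega\to\frac{1}{\sqrt2}^+$, $\alpha\to0$ and $T$ stays bounded, so $x\to0$.
\item As $\omega\to1$, $T\to\pi$ and $\alpha\to1$, so $x\to\pi$.
\end{itemize}
By continuity (the implicit function theorem applies since $G^1_t>0$, respectively $F^1_t\neq0$), $x$ crosses $\frac{\pi}{2}$. Type (a) shows this happens only at $\omega=\frac{3}{\sqrt{14}}$. Therefore $x<\frac{\pi}{2}$ on $(\frac{1}{\sqrt2},\frac{3}{\sqrt{14}})$ and $x>\frac{\pi}{2}$ on $(\frac{3}{\sqrt{14}},1)$. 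Hence $\Tmax$ is strictly decreasing and then strictly increasing, which gives the unique minimum.

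\textbf{Main obstacle.} The delicate step is rigor about the first zero and continuity of $\Tmax$, since $\Tmax$ is defined as a first zero rather than a generic branch. I would show $F^1>0$ on $(\frac{\pi}{2\omega},T)$ and that each zero there is simple (both $G^1_t>0$ and $F^1_t\neq0$ hold at zeros). Then the first-zero branch cannot jump, and the implicit function theorem makes $\Tmax$ a $C^1$ function on $(\frac{1}{\sqrt2},1)$. I would also verify that at $\omega=\frac{3}{\sqrt{14}}$, $F^1$ has no zero in $(\frac{\pi}{2\omega},\frac{3\pi}{4\omega})$. There both factors $\cos x$ and $\sin(2\omega t)$ keep signs that make $F^1>0$, via the tangent comparison $\tan(2\omega t)<0<\tan(t\alpha)$ for $2\omega t\in(\pi,\frac{3\pi}{2})$ after dividing by the cosines with their signs tracked.
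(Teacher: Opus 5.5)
Your proposal follows the paper's proof. In both, $x=\Tmax(\omega)\sqrt{2\omega^2-1}=\frac{\pi}{2}$ forces $2\omega\Tmax(\omega)=\frac{3\pi}{2}$ and hence $\omega=\frac{3}{\sqrt{14}}$, and the sign of Eq.~\eqref{eq:dt-mx-domega-1} is read off from $x<\frac{\pi}{2}$ versus $x\in(\frac{\pi}{2},\pi)$. Your continuity and endpoint argument ($x\to0$ as $\omega\to\frac{1}{\sqrt{2}}^+$, $x\to\pi$ as $\omega\to1^-$) is a genuine improvement. It is what actually shows that $x(\frac{3}{\sqrt{14}})=\frac{\pi}{2}$, and that $x$ lies below $\frac{\pi}{2}$ to the left and above it to the right. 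The paper uses both facts without comment. Note that $\Tmax(\omega)\to\pi$ as $\omega\to1^-$ does not follow from the bounds $\Tmax\in(\frac{\pi}{2\omega},\frac{\pi}{\omega})$ alone. It needs your no-jump argument applied at $\omega=1$, where $f^1_1=\sin t$ has a simple first zero at $\pi$.

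The direct check in your last paragraph, however, is wrong as stated. For $2\omega t\in(\pi,\frac{3\pi}{2})$ one has $\tan(2\omega t)>0$, not $<0$. Moreover, the term $\cos(t\alpha)\sin(2\omega t)$ is negative there, so signs alone do not give $F^1>0$. The check is redundant anyway. Your intermediate value argument, combined with the uniqueness in type (a), already forces $\Tmax(\frac{3}{\sqrt{14}})=\frac{3\pi}{4\omega}=\frac{\pi\sqrt{14}}{4}$.

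If you want the check directly, use $\frac{\omega}{\alpha}=\frac{3}{2}$, so that $2\omega t=3t\alpha$. On the relevant range $t\alpha\in(\frac{\pi}{3},\frac{\pi}{2})$, so $\cos(t\alpha)>0$ and $\cos(2\omega t)<0$. Hence $F^1>0$ is equivalent to $\tan(3t\alpha)<\frac{3}{2}\tan(t\alpha)$. Setting $u=\tan(t\alpha)>\sqrt{3}$, this reads $\frac{u(u^2-3)}{3u^2-1}<\frac{3}{2}u$, which reduces to $7u^2>-3$ and always holds.
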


\begin{proof}
    Lemma \ref{lem:fir-max-tim-1} implies that $\Tmax(\omega) \in (\frac{\pi}{2\omega},\frac{\pi}{\omega})$ for $\omega\in(\frac{1}{\sqrt{2}},1)$. As we see in the proof of Lemma \ref{lem:dt-max-dome-1}, if $\Tmax(\omega)\sqrt{2\omega^2-1}= \frac{\pi}{2}$, then the condition $F^1(\Tmax(\omega),\omega) = 0$ implies that $\cos(2\omega \Tmax(\omega)) = 0$. Hence $\pi<2\omega \Tmax(\omega)<2\pi$. The only zero of cosine in this interval is $2\omega \Tmax(\omega) = \frac{3\pi}{2}$. Combining this identity with $\Tmax(\omega)\sqrt{2\omega^2-1}= \frac{\pi}{2}$, we obtain
    $$\frac{\omega}{\sqrt{2\omega^2-1}} = \frac{3}{2},$$
    which leads to $\omega = \frac{3}{\sqrt{14}}$. 

    If $\omega\in (\frac{1}{\sqrt{2}},1)$ and $0<\Tmax(\omega)\sqrt{2\omega^2-1} < \frac{\pi}{2}$, then since $2(1-\omega^2)<1$ and
$\tan x>x$ for $x\in(0,\pi/2)$, we obtain
    $$ 2(1-\omega^2)\Tmax(\omega)\sqrt{2\omega^2-1}<\Tmax(\omega)\sqrt{2\omega^2-1}<\tan \big(\Tmax(\omega)\sqrt{2\omega^2-1}\big). $$
    Thus, Eq. \eqref{eq:dt-mx-domega-1}  implies $\frac{d}{d\omega} \Tmax <0$.

    If $\omega\in (\frac{1}{\sqrt{2}},1)$ and $\frac{\pi}{2}<\Tmax(\omega)\sqrt{2\omega^2-1} < \pi$, then $\tan \big(\Tmax(\omega)\sqrt{2\omega^2-1}\big)< 0$. It follows that 
    $$  2\sqrt{2\omega^2-1}(1-\omega^2)\Tmax(\omega)-\tan \big(\Tmax(\omega)\sqrt{2\omega^2-1}\big)>0. $$
    Thus, Eq. \eqref{eq:dt-mx-domega-1}  implies $\frac{d}{d\omega} \Tmax >0$.
\end{proof}

\begin{lemma}\label{lem:dt-max-dome-2}
    For $\omega\in (0,\frac{1}{\sqrt{2}})$,  let $\Tmax(\omega)$ denote the first positive solution of $F^3(t,\omega) = 0$. Then the following statement holds.
\begin{equation}\label{eq:dt-mx-domega-2}
    \frac{d}{d\omega}\Tmax
=
\frac{\tanh\!\left(\Tmax(\omega)\sqrt{1-2\omega^2}\right)- 2\sqrt{1-2\omega^2}(1-\omega^2)\Tmax(\omega)}{
\omega\sqrt{1-2\omega^2}
\left[
1-2\omega^2+
\tanh^2\!\left(\Tmax(\omega)\sqrt{1-2\omega^2}\right)
\right]
}.
\end{equation}
\end{lemma}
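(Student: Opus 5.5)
We follow the proof of Lemma \ref{lem:dt-max-dome-1}, with trigonometric functions of $t\sqrt{2\omega^2-1}$ replaced by hyperbolic ones. Write $\alpha=\sqrt{1-2\omega^2}$ and $T=\Tmax(\omega)$. By Lemma \ref{lem:fir-max-tim-3}, $T\in(\frac{\pi}{2\omega},\frac{\pi}{\omega})$, so $2\omega T\in(\pi,2\pi)$. First we check that $\cos(2\omega T)\neq 0$. If it vanished, then $\sin(2\omega T)=\pm1$, and $F^3(T,\omega)=0$ would force $\cosh(\alpha T)=0$, which is impossible. Since $\cosh(\alpha T)>0$ as well, we may divide $F^3$ by $\cosh(\alpha t)\cos(2\omega t)$ near $(T,\omega)$. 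Thus $T$ is a zero of
$$G^3(t,\omega)\coloneqq \tan(2\omega t)-\frac{\omega}{\sqrt{1-2\omega^2}}\tanh(t\sqrt{1-2\omega^2}),$$
and there is no exceptional case analogous to Eq. \eqref{eq:dt-mx-domega-1-1}.

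Next we compute the partial derivatives. We have $G^3_t=2\omega\sec^2(2\omega t)-\omega\,\mathrm{sech}^2(\alpha t)$. For $G^3_\omega$ we use $\frac{d}{d\omega}\frac{\omega}{\alpha}=\alpha^{-3}$ and $\frac{d\alpha}{d\omega}=-\frac{2\omega}{\alpha}$, which give
$$G^3_\omega=2t\sec^2(2\omega t)-\frac{\tanh(\alpha t)}{\alpha^{3}}+\frac{2\omega^2 t}{\alpha^2}\,\mathrm{sech}^2(\alpha t).$$
At $(T,\omega)$ we eliminate $\sec^2(2\omega T)$ using $G^3=0$. Writing $\tau=\tanh(\alpha T)$, we have $\mathrm{sech}^2(\alpha T)=1-\tau^2$ and $\sec^2(2\omega T)=1+\frac{\omega^2}{\alpha^2}\tau^2$.

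Substituting these, the derivative in $t$ becomes
$$G^3_t=\frac{\omega}{\alpha^2}\bigl(2\alpha^2+2\omega^2\tau^2-\alpha^2+\alpha^2\tau^2\bigr)=\frac{\omega}{\alpha^2}\bigl(1-2\omega^2+\tau^2\bigr),$$
because $2\omega^2+\alpha^2=1$. This is positive for $\omega>0$. The same substitution in $G^3_\omega$ cancels the $\tau^2$ terms ($2T\omega^2\tau^2/\alpha^2-2\omega^2T\tau^2/\alpha^2=0$) and leaves
$$G^3_\omega=2T+\frac{2\omega^2T}{\alpha^2}-\frac{\tau}{\alpha^3}=\frac{2(1-\omega^2)T}{\alpha^2}-\frac{\tau}{\alpha^3}.$$

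Since $G^3_t\neq0$ and $G^3$ is smooth near $(T,\omega)$, the implicit function theorem gives a smooth local branch $t(\omega)$ of zeros of $G^3$. This branch must coincide with $\Tmax$, i.e. it must remain the \emph{first} zero. To see this, note that on $(0,\frac{\pi}{2\omega}]$ the function $f^3_\omega$ is positive, by the argument of Lemma \ref{lem:fir-max-tim-3}. We then need that no earlier zero appears in $(\frac{\pi}{2\omega},T)$ under small perturbations of $\omega$. This follows from the continuity of $F^3$ together with the transversality $F^3_t\neq0$ at $T$, which we get from $G^3_t\neq0$ and the nonvanishing of the divisor $\cosh\cos$. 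A compactness argument on $[\frac{\pi}{2\omega}, T-\epsilon]$ then finishes it: $F^3$ is bounded away from zero there, and the zero near $T$ is unique.

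Finally, the implicit derivative is
$$\frac{d}{d\omega}\Tmax=-\frac{G^3_\omega}{G^3_t}=\frac{\tau/\alpha-2(1-\omega^2)T}{\omega(1-2\omega^2+\tau^2)}.$$
Multiplying numerator and denominator by $\alpha$ yields exactly Eq. \eqref{eq:dt-mx-domega-2}. The main obstacle is the algebra in simplifying $G^3_\omega$: the sign conventions of the hyperbolic derivatives must be tracked so that the $\tau^2$ terms cancel. The identification of the implicit branch with the first zero is routine by comparison.
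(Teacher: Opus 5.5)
Your proposal is correct and follows the paper's route: rule out $\cos(2\omega T)=0$, pass to $G^3$, check that $G^3_t=\frac{\omega}{\alpha^2}(1-2\omega^2+\tau^2)>0$ at the zero, and apply the implicit function theorem. Your explicit computation of $G^3_\omega$ and your argument that the implicit branch stays the first zero fill in details the paper only asserts by analogy with Lemma \ref{lem:dt-max-dome-1}; for that argument, use a slightly enlarged compact interval such as $[\frac{\pi}{2\omega}-\delta,\,T-\epsilon]$, since the endpoint $\frac{\pi}{2\omega'}$ supplied by Lemma \ref{lem:fir-max-tim-3} moves with $\omega'$.
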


\begin{proof}
    We first claim that \(\cos(2\omega \Tmax(\omega))\neq0\) for $\omega \in (0,\frac{1}{\sqrt{2}})$. Indeed, if  \(\cos(2\omega \Tmax(\omega)) = 0\), then $F^3(\Tmax(\omega),\omega) = 0$ implies 
    $$\cosh\big(\Tmax(\omega)\sqrt{1-2\omega^2}\big) = 0,$$
    which is a contradiction. 

    Therefore, we can consider the function 
    $$ G^3 (t,\omega) \coloneqq  \tan(2\omega t)-\frac{\omega}{\sqrt{1-2\omega^2}}\tanh(t\sqrt{1-2\omega^2}). $$
    Along the Maxwell branch, $G^3_t>0$. Hence, the implicit function theorem applies, and the same calculation as in Lemma \ref{lem:dt-max-dome-1} yields \eqref{eq:dt-mx-domega-2}.
\end{proof}

\begin{Prop}\label{prp:dt-dome-neg-2}
    The function $\Tmax$ is strictly decreasing on $(0,\frac{1}{\sqrt{2}})$.
\end{Prop}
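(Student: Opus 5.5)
The plan is to read off the sign of $\frac{d}{d\omega}\Tmax$ from Eq. \eqref{eq:dt-mx-domega-2} in Lemma \ref{lem:dt-max-dome-2}. That lemma already shows $\Tmax$ is differentiable on $(0,\frac{1}{\sqrt{2}})$ via the implicit function theorem. So it suffices to show that the right-hand side of Eq. \eqref{eq:dt-mx-domega-2} is strictly negative for every $\omega\in(0,\frac{1}{\sqrt{2}})$.

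\textbf{Denominator.} Since $\omega>0$, $\sqrt{1-2\omega^2}>0$, and $1-2\omega^2+\tanh^2(\cdot)>0$, the denominator
$$\omega\sqrt{1-2\omega^2}\big[1-2\omega^2+\tanh^2(\Tmax(\omega)\sqrt{1-2\omega^2})\big]$$
is strictly positive. The sign is therefore decided by the numerator.

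\textbf{Numerator.} Set $x\coloneqq \Tmax(\omega)\sqrt{1-2\omega^2}$. By Lemma \ref{lem:fir-max-tim-3}, $\Tmax(\omega)>0$, so $x>0$. The numerator is $\tanh(x)-2(1-\omega^2)x$. For $x>0$ we have $\tanh x<x$. Moreover $\omega^2<\frac12$ gives $2(1-\omega^2)>1$, so $x<2(1-\omega^2)x$. Hence
$$\tanh x < x < 2(1-\omega^2)x,$$
so the numerator is strictly negative. This is the same inequality used in Lemma \ref{lem:con-time-4}.

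\textbf{Conclusion.} $\frac{d}{d\omega}\Tmax<0$ on the whole interval $(0,\frac{1}{\sqrt{2}})$, and the mean value theorem gives strict monotonic decrease.

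\textbf{Main obstacle.} The only delicate point is making sure the derivative formula is valid at every $\omega$ in the open interval. This needs the first-zero branch $\omega\mapsto\Tmax(\omega)$ to be the branch selected by the implicit function theorem, with no jump to another zero of $F^3(\cdot,\omega)$. I would handle it this way:
\begin{itemize}
\item $G^3_t>0$ at every zero of $G^3(\cdot,\omega)$, and $\cos(2\omega t)\neq 0$ there, so every zero of $F^3(\cdot,\omega)$ is simple.
\item Simple zeros vary continuously with $\omega$.
\item By Lemma \ref{lem:fir-max-tim-3}, $F^3(\cdot,\omega)>0$ on $(0,\frac{\pi}{2\omega}]$, so no new zero can appear from $t=0$.
\end{itemize}
Together these show the first zero is locally given by the implicit-function branch, hence $\Tmax$ is $C^1$ on $(0,\frac{1}{\sqrt{2}})$ and Eq. \eqref{eq:dt-mx-domega-2} holds throughout.
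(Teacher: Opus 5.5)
Your proposal is correct and follows the paper's proof. In both, the denominator of Eq.~\eqref{eq:dt-mx-domega-2} is positive, and the numerator $\tanh x-2(1-\omega^2)x$ is negative because $\tanh x<x<2(1-\omega^2)x$; your extra check that the first positive zero of $F^3(\cdot,\omega)$ stays on the implicit-function branch (all zeros simple, and $F^3(\cdot,\omega)>0$ on $(0,\frac{\pi}{2\omega}]$) makes explicit a regularity point that the paper's Lemma~\ref{lem:dt-max-dome-2} passes over quickly.
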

\begin{proof}
    For $\omega\in (0,\frac{1}{\sqrt{2}})$, we have $2(1-\omega^2) > 1$. This inequality, together with
    $$ \tanh\big( \Tmax(\omega)\sqrt{1-2\omega^2}\big) < \Tmax(\omega)\sqrt{1-2\omega^2}$$
    implies that the numerator in Eq. \eqref{eq:dt-mx-domega-2} is negative. Therefore, $\frac{d}{d\omega} \Tmax < 0$. 
\end{proof}

\begin{proof}[Proposition \ref{prp:f-max-t-reg}]
    Propositions \ref{prp:dt-dome-neg-1} and \ref{prp:dt-dome-neg-2} imply that $\Tmax$ \(t_{\mathrm{Max}}\) is continuously differentiable on $(0,\frac{1}{\sqrt{2}})\cup(\frac{1}{\sqrt{2}},1)$. We now establish its $C^1$-regularity at $\omega = \frac{1}{\sqrt{2}}$. By Lemmas \ref{lem:fir-max-tim-1}, \ref{lem:fir-max-tim-2}, and \ref{lem:fir-max-tim-3}, the first Maxwell time satisfies $\Tmax(\omega) \in (\frac{\pi}{2\omega},\frac{\pi}{\omega}]$ for $\omega>0$. Moreover,
    $$ \lim_{\omega \to {\frac{1}{\sqrt{2}}^+}}F^1(t,\omega) = F^2(t,\omega) = \lim_{\omega \to {\frac{1}{\sqrt{2}}^-}}F^3(t,\omega).$$
    Moreover, $F^2(\cdot,\frac{1}{\sqrt{2}})$ has a unique zero in $[\frac{\pi}{\sqrt{2}},\sqrt{2}\pi]$, as follows from the strict monotonicity of $\tan x -\frac{x}{2}$ on $(\pi,\frac{3}{2}\pi)$. Then, every accumulation point of $\Tmax(\omega)$, as $\omega \to \frac{1}{\sqrt{2}}$, must equal $\Tmax(\frac{1}{\sqrt{2}})$. Therefore, $\Tmax$ is continuous at $\omega = \frac{1}{\sqrt{2}}$.

    Since $\Tmax$ is continuous at $\omega = \frac{1}{\sqrt{2}}$ and continuously differentiable on either side, it suffices to show that its one-sided derivatives converge to the same finite limit.
    $$ \lim_{\omega \to {\frac{1}{\sqrt{2}}^+}} \frac{d}{d\omega}\Tmax(\omega) = \lim_{\omega \to {\frac{1}{\sqrt{2}}^-}} \frac{d}{d\omega}\Tmax(\omega).$$

    Consider $\omega\in (\frac{1}{\sqrt{2}},1)$ and set $\varepsilon = \sqrt{2\omega^2-1}$. Using the identity $2(1-\omega^2) = 1 -\varepsilon^2$ and the Taylor expansion $\tan x = x + \frac{x^3}{3}+O(x^5)$, we obtain the following expansion for the numerator in Eq.\eqref{eq:dt-mx-domega-1}:
    $$ \varepsilon(1-\varepsilon^2) \Tmax -\varepsilon \Tmax - \frac{\varepsilon^2\Tmax^3}{3}+O(\varepsilon^5) = -\varepsilon^3 \Tmax(1+\frac{\Tmax^2}{3}) +O(\varepsilon^5).$$
    Similarly, the denominator satisfies
    $$ \varepsilon^3\omega[1+\Tmax^2+O(\varepsilon^2)].$$
    Therefore, 
    $$\lim_{\omega \to {\frac{1}{\sqrt{2}}^+}} \frac{d}{d\omega}\Tmax = -\sqrt{2}\Tmax\bigg(\frac{1}{\sqrt{2}}\bigg)\frac{1+\frac{\Tmax^2(\frac{1}{\sqrt{2}})}{3}}{1+\Tmax^2(\frac{1}{\sqrt{2}})}.$$

    Now consider $\omega\in (0,\frac{1}{\sqrt{2}})$, and set $\varepsilon = \sqrt{1-2\omega^2}$. The numerator in Eq. \eqref{eq:dt-mx-domega-2} satisfies
    $$ - \varepsilon^3\Tmax (1+\frac{\Tmax^2}{3}) + O(\varepsilon^5), $$
    while the denominator satisfies
    $$ \varepsilon^3\omega[1+\Tmax^2+O(\varepsilon^2)]. $$
    Therefore, 
    $$\lim_{\omega \to {\frac{1}{\sqrt{2}}^-}} \frac{d}{d\omega}\Tmax = -\sqrt{2}\Tmax\bigg(\frac{1}{\sqrt{2}}\bigg)\frac{1+\frac{\Tmax^2(\frac{1}{\sqrt{2}})}{3}}{1+\Tmax^2(\frac{1}{\sqrt{2}})}.$$
Thus, the one-sided derivatives agree, and $\Tmax$ is $C^1$ at $\omega = \frac{1}{\sqrt{2}}$. 

Finally, Propositions \ref{prp:dt-dome-neg-1} and \ref{prp:dt-dome-neg-2} imply that $\Tmax$ attains its unique global minimum at
    $\omega = \frac{3}{\sqrt{14}}$ and $\Tmax(\frac{3}{\sqrt{14}}) = \frac{\pi\sqrt{14}}{4}. $ 
\end{proof}

Having characterized the first Maxwell time in each of the three regimes and established the continuity of $\Tmax(\omega)$, we now prove Lemma \ref{lem:g-ome-sym}, which describes the geometry of the Maxwell stratum.

\begin{proof}[Lemma \ref{lem:g-ome-sym}] Consider first $\omega\in (\frac{1}{\sqrt{2}},1)$. Using Lemma \ref{lem:expl-form} and the same computation as in Eq. \eqref{eq:cut-point-proof}, we obtain
$$ \gamma_\omega(t)-\gamma_\omega(t)^\T = 2F^1(t,\omega)\xi_3.$$
Since $F^1(\Tmax(\omega),\omega)=0$, the preceding identity implies that $g_\omega$ is symmetric. The same argument applies in the remaining two regimes. Thus, $g_\omega$ is symmetric for all $\omega\in (0,1]$. Since $g_\omega\in \SLg$ is symmetric, its eigenvalues are real, nonzero, and have the same sign. Moreover, the map $\omega \to g_\omega$ is continuous on $(0,1]$, and $g_1 = \exp(\pi \xi_3) = -\Id$. Since $(0,1]$ is connected and the eigenvalues cannot cross zero, $g_\omega$ is negative-definite for every $\omega \ in (0,1]$. Finally, conjugation by \(Q_\phi\in \SO\) preserves symmetry and negative-definiteness. Therefore, every $g \in \MAS$ is symmetric and negative-definite.
\end{proof}


\section{Topological Study}\label{sec:top}
In this section, we complete the optimal synthesis and prove Theorem \ref{thm:main-3}. In Subsection \ref{subsec:had-the}, we verify the hypotheses of Hadamard's global diffeomorphism theorem for the restriction $\Exp\vcentcolon\MD\to \ND$, except for the topological properties of $\ND$. In Subsection \ref{subsubsec:proof-cons}, assuming these properties, we prove Theorem \ref{thm:main-3} and Corollaries \ref{cor:cut-time}-\ref{cor:equi-opt}. Finally, in Subsection \ref{subsec:N-simp-conn}, we analyze the topology of $\ND$ through the Maxwell stratum and establish Theorem \ref{thm:M-simp-conn}.


\subsection{Hadamard’s Global Diffeomorphism Theorem}\label{subsec:had-the}
We begin this section by presenting Hadamard’s Global Diffeomorphism Theorem \cite{krantz2002implicit}. 
\begin{backgroundtheorem}
    Let $M$ and $N$ be connected smooth manifolds of the same dimension, without boundary, and suppose that $\ND$ is simply connected. If $F\vcentcolon M \to N$ is a smooth, proper map such that $dF|_{p}$ is invertible for every $p\in M$, then $F$ is a global diffeomorphism.    
\end{backgroundtheorem}

In this section, we apply Hadamard’s Global Diffeomorphism Theorem to the exponential map $\Exp\vcentcolon\MD \to \ND$. By Corollary \ref{cor:t-max-t-con}, every nonzero initial velocity $\xi \in \MD$ determines a geodesic segment whose length is strictly smaller than its first conjugate time. Consequently, $d\Exp$ is nonsingular for every $\xi \in \MD$. Thus, we only need to verify the other hypotheses of Hadamard's theorem.

\begin{lemma}\label{lem:M-con}
  The subdomain  $\MD\subseteq\sla$ is open and contractible. 
\end{lemma}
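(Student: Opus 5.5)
The plan is to describe $\MD$ as a star-shaped region with a continuous radial bound. Every nonzero $\xi\in\sla$ can be written as $\xi=t\hat\xi$ with $t=\|\xi\|>0$ and $\hat\xi\in\mathbb{S}^2(\sla)$. Define $\omega(\hat\xi)\coloneqq\langle\hat\xi,\xi_3\rangle$, so that $\hat\xi\in\Gamma_{\omega(\hat\xi)}$. By definition, $\xi\in\MD$ if and only if $\xi=0_2$ or $\|\xi\|<\Tmax(\omega(\hat\xi))$.

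Set $\tau(\hat\xi)\coloneqq\Tmax(\omega(\hat\xi))\in(0,\infty]$. Since $\Tmax(-\omega)=\Tmax(\omega)$, the map $\tau$ depends only on $|\omega|$. We then have
\[
\MD=\{0_2\}\cup\{t\hat\xi : 0<t<\tau(\hat\xi)\}.
\]
From this description, contractibility is immediate. The set is star-shaped with respect to $0_2$: if $t\hat\xi\in\MD$, then $st\hat\xi\in\MD$ for every $s\in[0,1]$. The straight-line homotopy $H(\xi,s)=(1-s)\xi$ therefore stays in $\MD$ and contracts it to the origin.

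For openness, the key step is to show that $\tau\colon\mathbb{S}^2(\sla)\to(0,\infty]$ is continuous, with the usual topology on $(0,\infty]$. Since $\hat\xi\mapsto|\omega(\hat\xi)|$ is continuous, it suffices to prove that $\Tmax$ is continuous on $[0,1]$ with $\Tmax(0)=\infty$.

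\textbf{Continuity on $(0,1)$.} Proposition~\ref{prp:f-max-t-reg} gives that $\Tmax$ is $C^1$, hence continuous, on $(0,1)$.

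\textbf{Continuity at $\omega=0$.} Theorem~\ref{thm:Max-time} gives $\Tmax(\omega)>\pi/(2|\omega|)$, so $\Tmax(\omega)\to\infty$ as $\omega\to0$, which matches the convention $\Tmax(0)=\infty$.

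\textbf{Continuity at $\omega=1$.} This is the main obstacle, since Proposition~\ref{prp:f-max-t-reg} does not cover the endpoint. I would use the bracket $\Tmax(\omega)\in(\pi/(2\omega),\pi/\omega]$ together with the behaviour of the zeros of $F^1(\cdot,\omega)$ as $\omega\to1^-$. The function $F^1(t,\omega)$ is jointly continuous and converges to $\sin t$ uniformly on compact subsets of $t$.
\begin{itemize}
\item Every accumulation point $t_*$ of $\Tmax(\omega)$ as $\omega\to1^-$ lies in $[\pi/2,\pi]$ and satisfies $\sin t_*=0$. Hence $t_*=\pi=\Tmax(1)$.
\item Alternatively, Proposition~\ref{prp:dt-dome-neg-1} shows that $\Tmax$ is increasing on $(3/\sqrt{14},1)$ and bounded above by $\pi/\omega\to\pi$. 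So the limit exists, and the zero argument identifies it as $\pi$.
\end{itemize}

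With $\tau$ continuous, openness follows directly. Take $\xi_0=t_0\hat\xi_0$ with $0<t_0<\tau(\hat\xi_0)$. The map $\xi\mapsto(\|\xi\|,\xi/\|\xi\|)$ is continuous near $\xi_0$, so $\|\xi\|<\tau(\xi/\|\xi\|)$ holds in a neighbourhood of $\xi_0$. It remains to treat the origin. Since $\mathbb{S}^2(\sla)$ is compact and $\tau$ is continuous and positive, $\tau$ has a positive minimum $r$. In fact, Proposition~\ref{prp:f-max-t-reg} together with $\Tmax(1)=\pi$ shows that $r\ge\pi\sqrt{14}/4$. The open ball of radius $r$ about $0_2$ is then contained in $\MD$. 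Therefore $\MD$ is open.
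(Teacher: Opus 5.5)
Your proof is correct and follows the same route as the paper. $\MD$ is star-shaped about $0_2$, so the straight-line homotopy contracts it, and openness comes from continuity of $\Tmax$ together with a ball of radius $\pi\sqrt{14}/4$ around the origin. Your explicit continuity checks at $\omega=0$ (via $\Tmax(\omega)>\pi/(2|\omega|)$) and at $\omega=\pm1$ (via the zeros of $F^1(\cdot,\omega)\to\sin t$) fill in a point the paper leaves implicit, since Proposition~\ref{prp:f-max-t-reg} as stated only covers $(0,1)$.
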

\begin{proof}
    Recall that 
    $$ \MD \coloneqq \{0_{2}\}\cup \Big(\bigcup_{\omega \in [-1,1]}  \Psi\big((0,\Tmax(\omega))\times \Gamma_\omega\big) \Big). $$
    Since $\Tmax$ is continuous by Proposition \ref{prp:f-max-t-reg}, the set
    $$\bigcup_{\omega \in [-1,1]} (0,\Tmax(\omega))\times \Gamma_\omega$$
    is open in $\mathcal{E}$. Since $\Psi\vcentcolon\mathcal{E} \to \sla\setminus\{0_2\}$ is a diffeomorphism, its image is open in $\sla\setminus\{0_2\}$. Moreover, Proposition \ref{prp:f-max-t-reg} implies that the open ball of radius $\frac{\pi\sqrt{14}}{4}$ centered at the origin is contained in $\MD$. Therefore, $\MD$ is open.   Furthermore, the subdomain $\MD$ is star-shaped with respect to $0_2$. Indeed, if $\xi \in \MD$, then $(1-s)\xi \in \MD$ for all $s\in [0,1]$. Consequently,  the homotopy $H\vcentcolon\MD\times[0,1] \to \MD$ given by $H(\xi,s) = (1-s)\xi$ satisfies $H(\xi,0) = \xi$, and $H(\xi,1) = 0_2$.  Thus, $H$ defines a contraction of $\MD$ to $0_2$. In particular, $\MD$ is contractible and hence connected.
\end{proof}

\begin{lemma}\label{lem:exp-prp}
    The map $\Exp\vcentcolon\MD \to \ND$ is proper. 
\end{lemma}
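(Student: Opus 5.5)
We need to show that for every compact $K\subseteq\ND$, the preimage $\Exp^{-1}(K)\cap\MD$ is compact. Equivalently, we show that every sequence $\eta_n\in\MD$ with $\Exp(\eta_n)\to g\in\ND$ has a subsequence converging in $\MD$. The argument rests on three facts: the Riemannian distance controls the length of initial velocities of minimizing segments, the closure of $\MD$ adds only the boundary points $\Tmax(\omega)\hat\xi$, and these boundary points are sent into $\MAS$.

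\textbf{Step 1 (boundedness).} We first check that $\MD$ is already bounded. By Theorem \ref{thm:Max-time}, $\Tmax(\omega)\le \pi/|\omega|$ for $\omega\neq0$, so this holds away from $\omega=0$. It fails near $\omega=0$, however, since $\Tmax(0)=\infty$ and $\MD$ contains the entire rays in $\Gamma_0$. So boundedness must come from $K$ instead. Since $K$ is compact, the distance $d(\Id,\cdot)$ is bounded on $K$, say by $R$. We then want $\|\eta\|\le C(R)$ for $\eta\in\MD$ with $\Exp(\eta)\in K$. For $|\omega|\ge\delta$ this is automatic, since $\|\eta\|<\pi/\delta$. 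For small $|\omega|$ we use the explicit formula of Lemma \ref{lem:expl-form}: the factor $\cosh(t\sqrt{1-2\omega^2})\Id+\frac{\sinh(t\sqrt{1-2\omega^2})}{\sqrt{1-2\omega^2}}\xi^\T$ multiplied by a rotation has operator norm growing like $e^{t\sqrt{1-2\omega^2}}$. Its entries are therefore unbounded as $t\to\infty$, uniformly for $|\omega|\le\delta<1/\sqrt2$, whereas the entries of elements of $K$ are bounded. This gives a uniform bound $\|\eta\|\le C$ on $\Exp^{-1}(K)\cap\MD$. An alternative route is to avoid the matrix norm and use that $\Exp(\eta)\in K$ with $\eta$ in a region where the geodesic is minimizing; but minimality is not yet known at this stage, so I will use the norm-growth argument.

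\textbf{Step 2 (limit points).} Now take $\eta_n=t_n\hat\xi_n\in\MD\cap\Exp^{-1}(K)$. By Step 1 we may pass to a subsequence with $t_n\to t^*<\infty$ and $\hat\xi_n\to\hat\xi\in\Gamma_{\omega^*}$, using compactness of the sphere, with $\omega_n\to\omega^*$. Since $\eta_n\in\MD$, we have $t_n<\Tmax(\omega_n)$. Proposition \ref{prp:f-max-t-reg} gives continuity of $\Tmax$ on $(0,1)$; together with the symmetry $\Tmax(-\omega)=\Tmax(\omega)$, the values at $\omega=\pm1$, and $\Tmax\to\infty$ as $\omega\to0$, this yields $t^*\le\Tmax(\omega^*)$. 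At $\omega=\pm1$ we need continuity of $\Tmax$ as $\omega\to1$; this follows by the same accumulation argument as in the proof of Proposition \ref{prp:f-max-t-reg}, since $\Tmax\in(\pi/2\omega,\pi/\omega]$ and $f^1_1(t)=\sin t$ has a simple first zero at $\pi$.

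\textbf{Step 3 (excluding the boundary).} If $t^*<\Tmax(\omega^*)$, then $\eta=t^*\hat\xi\in\MD$ (or $\eta=0_2$) and we are done. Otherwise $t^*=\Tmax(\omega^*)$ with $\omega^*\neq0$. Then continuity of $\Exp$ gives $\Exp(\eta_n)\to\Exp(\Tmax(\omega^*),\hat\xi)$, which lies in $\MAS$ by definition of the Maxwell stratum (using $\omega\to-\omega$ invariance for $\omega^*<0$, equivalently Lemma \ref{lem:g-ome-sym}). But this limit lies in $K\subseteq\ND=\SLg\setminus\MAS$, a contradiction. Hence $\Exp^{-1}(K)\cap\MD$ is sequentially compact.

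The main obstacle is Step 1, the uniform escape to infinity near $\omega=0$, together with the continuity of $\Tmax$ at the endpoints used in Step 2. For Step 1 I would estimate the trace of $g^\T g$ from the explicit formula to get a lower bound like $c\,e^{2t\sqrt{1-2\omega^2}}$ for $|\omega|\le 1/2$.
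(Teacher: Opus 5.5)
Your argument is correct and is essentially the paper's proof. The paper splits into two cases: bounded $t_n$, where the limit must sit at $t=\Tmax(\omega)$ and therefore maps into $\MAS$; and unbounded $t_n$, where $t_n<\Tmax(\omega_n)\le\pi/|\omega_n|$ forces $\omega_n\to0$, so $\|\Exp(p_n)\|\ge\cosh(t_n\sqrt{1-2\omega_n^2})\to\infty$. You phrase the same argument directly as compactness of preimages rather than by contradiction, and your explicit check at $\omega=\pm1$ is a welcome addition, since Proposition~\ref{prp:f-max-t-reg} only covers $(0,1)$ (there, $\Tmax(\omega)\le\pi/|\omega|\to\pi$ already gives the needed upper semicontinuity).
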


\begin{proof}
   Recall that a continuous map $F\vcentcolon X\to Y$ between smooth manifolds is proper if and only if every sequence in $X$ that leaves every compact subset of $X$ is mapped to a sequence that leaves every compact subset of $Y$. 

    Suppose, for contradiction, that $\Exp\vcentcolon\MD\to\ND$ is not proper. Then there exist a compact set $K\subset \ND$ and a sequence $\{p_n\}\subseteq \MD$ leaving every compact subset of $\MD$ such that $\Exp(p_n) \in K$ for all $n$. Since $\{p_n\}$ leaves every compact subset of $\MD$, we may assume, after discarding finitely many terms, that $p_n\neq 0_2$ fro all $n$. Using polar coordinates, write
    $$ p_n = t_n\xi_n,\quad\xi_n \in \Gamma_{\omega_n}, \quad 0<t_n<\Tmax(\omega_n).$$
    Since $\mathbb{S}^2(\sla)$ is compact, after passing to a subsequence we may assume that $\xi_n\to \xi\in \Gamma_\omega$. In particular, $\omega_n \to \omega$.

    We distinguish two cases.

    \textbf{Case 1: $\{ t_n\}$ is bounded}. After passing to a subsequence, $t_n\to t$. Since \(\{p_n\}\) leaves every compact subset of $\MD$, its limit cannot belong to \(\MD\). Since $t_n<\Tmax(\omega_n)$ and $\Tmax$ is continuous by Proposition \ref{prp:f-max-t-reg}, we have $t\leq \Tmax(\omega)$. If $t<\Tmax(\omega)$, then $p_n \to t\xi \in \MD$, contradicting the assumption that $\{p_n\}$ leaves every compact subset of $\MD$. This includes the case $t= 0$, since $0_2\in\MD$. Therefore, $t=\Tmax(\omega)$. It follows that 
    $$\lim_{n\to \infty}\Exp\Big(t_n,\xi_n\Big) = \Exp\Big(\lim_{n\to \infty}(t_n,\xi_n)\Big) = \Exp(t_{\mathrm{Max}}(\omega),\xi)
\in\MAS.$$
    Since $\Exp(p_n) \in K$ and $K$ is compact, its limit belongs to $K\subset\ND$, contradicting $\MAS\cap\ND=\varnothing$.

     \textbf{Case 2: $\{t_n\}$ is unbounded}. After passing to a subsequence, $t_n \to \infty$. Since $\Tmax$ is continuous and finite on every compact subset of $[-1,1]\setminus \{0\}$, the condition $t_n<\Tmax(\omega)$, together with $t_n \to \infty$ implies that $\omega_n \to 0$. Hence, there exists $N>0$ such that $|\omega_n| < \frac{1}{2}$ for every $n>N$. By Lemma \ref{lem:expl-form}, for $n>N$,
    $$\Exp(p_n) =\left(\cosh(t_n \alpha_n) \Id + \frac{\sinh(t_n\alpha_n)}{\alpha_n} \xi^\T_n \right)\exp(2\omega_n t_n\xi_3), $$
    where $\alpha_n = \sqrt{1-2\omega_n^2}$.
    Since $\alpha_n \to 1$ as $\omega_n \to 0$, we have $t_n\alpha_n \to \infty$.  

    We extend the norm defined in Section \ref{sec:int} to the space of real $2\times2$ matrices by setting
    $$ \|A\|^2\coloneqq\frac{1}{2}\Tr(AA^\T).$$ 
    Define $Q_n\coloneqq\exp(\omega_n t_n\xi_3)$, since the right multiplication by $Q_n$, preserves the norm. Hence,
    $$ \| \Exp(t_n,\xi_n)\| = \left\|\cosh(t_n\alpha_n) Id + \frac{\sinh(t_n\alpha_n)}{\alpha_n} \xi^\T_n  \right\|.$$
    Since $\Tr(\xi_n) = 0$, the matrices $Id$ and $\xi_n^\T$ are orthogonal with respect to the inner product. Thus,
    $$\| \Exp(t_n,\xi_n)\|^2  \geq \cosh^2( t_n\alpha_n).$$
    Since $t_n\alpha_n \to \infty$, it follows that $\| \Exp(t_n,\xi_n)\| \to \infty$. Therefore, $\{\Exp(t_n,\xi_n)\}$ leaves every compact set in  $\ND$. 
\end{proof}


\begin{Theorem}\label{thm:M-simp-conn}
    The subset $\ND$ is open and diffeomorphic to $\R^3$. In particular, it is open and simply connected. 
\end{Theorem}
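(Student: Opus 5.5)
By Lemma \ref{lem:g-ome-sym}, $\MAS$ is the set of symmetric negative-definite matrices in $\SLg$. So $\ND$ is the complement in $\SLg$ of $\{g=g^\T,\ g<0\}$, and the plan is to describe this set explicitly in coordinates on $\SLg$ and then exhibit $\ND$ as a diffeomorphic copy of $\R^3$. I will use the polar (Cartan) decomposition $\SLg\cong \SO\times \Sym$, $g=QP$, where $\Sym$ denotes the symmetric positive-definite matrices of determinant one. This gives $\SLg\cong S^1\times\R^2$ via $Q=\exp(\theta\xi_3)$ and $P=\exp(a\xi_1+b\xi_2)$, with $(a,b)\in\R^2$.

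\textbf{Step 1: closedness.} In these coordinates, $g=QP$ is symmetric negative-definite exactly when $g=-P'$ with $P'\in\Sym$. By uniqueness of the polar decomposition this means $Q=-\Id$ and $P$ arbitrary, because $-P'=(-\Id)P'$ and $-\Id\in\SO$. Hence
\[
\MAS=\{-\Id\}\cdot\Sym=\{\theta=\pi\}\times\R^2 .
\]
Its complement is $\ND\cong (S^1\setminus\{-\Id\})\times \R^2\cong(-\pi,\pi)\times\R^2$. This set is open since $\MAS$ is closed: it is the preimage of the closed point $-\Id$ under the continuous projection $g\mapsto Q=g(g^\T g)^{-1/2}$. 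The same observation gives $\MAS$ directly as a closed set, namely symmetric matrices with $\Tr g\le -2$.

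\textbf{Step 2: the diffeomorphism.} The explicit map
\[
\R^3\ni(\theta,a,b)\mapsto \exp(\theta\xi_3)\exp(a\xi_1+b\xi_2)\in\ND,\qquad \theta\in(-\pi,\pi),
\]
composed with any diffeomorphism $\R\to(-\pi,\pi)$ (for example $2\arctan$), is the required diffeomorphism. Its inverse is smooth because the polar decomposition map $g\mapsto(g(g^\T g)^{-1/2},(g^\T g)^{1/2})$ is smooth. Two further facts are needed: $\exp$ restricted to symmetric traceless matrices is a diffeomorphism onto $\Sym$, which follows from spectral decomposition and the smooth matrix logarithm; and $\theta\mapsto\exp(\theta\xi_3)$ is a diffeomorphism from $(-\pi,\pi)$ onto $\SO\setminus\{-\Id\}$. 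Simple connectivity then follows at once from contractibility of $\R^3$.

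\textbf{Main obstacle.} The only real subtlety is the identification of $\MAS$ with $\{-\Id\}\cdot\Sym$. This must be done carefully, using that the symmetric negative-definite matrices of determinant one are exactly $-P$ with $P\in\Sym$, together with uniqueness of the polar factors. Once that identification is in place, the rest is standard. I would also note that Lemma \ref{lem:g-ome-sym}, as proved so far, gives only the inclusion $\MAS\subseteq\{g=g^\T,g<0\}$. The statement here, however, is phrased purely in terms of $\ND=\SLg\setminus\MAS$ with $\MAS$ as described in that lemma, so I take the lemma's equality as given.
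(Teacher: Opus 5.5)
Your topological step is the same as the paper's. The paper also uses the polar decomposition (written $g=S^{1/2}Q$ with $S=gg^\T$), shows $\Pm(\MAS)=\Sym\times\{-\Id\}$, and concludes $\ND\cong\Sym\times(\SO\setminus\{-\Id\})\cong\R^3$. Granting the set equality $\MAS=\{g=g^\T,\ g<0\}$, your Steps 1 and 2 are correct, including the closedness argument via $\Tr g\le -2$.

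\textbf{The gap is that equality.} You rightly note that the proof of Lemma \ref{lem:g-ome-sym} only gives $\MAS\subseteq\{g=g^\T,\ g<0\}$. The reverse inclusion you then assume is not a formality. It is what the paper's proof of this theorem is devoted to, namely Proposition \ref{prp:bij-mas-sym}, and both conclusions fail without it:
\begin{itemize}
\item Closedness of $\MAS$, and hence openness of $\ND$, is no longer automatic.
\item Suppose some $-P_0$ with $P_0\in\Sym$ were not a first Maxwell point. Then the loop $\theta\mapsto\exp(\theta\xi_3)P_0$, $\theta\in[0,2\pi]$, lies in $\ND$. Under the continuous map $g\mapsto g(g^\T g)^{-1/2}$ it goes to a generator of $\pi_1(\SO)\cong\mathbb{Z}$, so $\ND$ would not be simply connected.
\end{itemize}
So the ``main obstacle'' you name, identifying $\{g=g^\T,\ g<0\}$ with $\{-\Id\}\cdot\Sym$, is routine. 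The real obstacle is showing that the first Maxwell points sweep out this entire slice.

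To close the gap, use $\F(g)=gg^\T$ as the paper does.
\begin{itemize}
\item $\F$ is $\SO$-equivariant, and $\SO$ acts on $\Sym$ by rotations in the coordinates $(\varphi,\vartheta)$. Hence $\F(\MAS)$ is the origin (from $g_1=-\Id$) together with full circles of radius $\rho(\omega)$, $\omega\in(0,1)$. Here $\rho^2=q^2(1+q^2/4)$, $\Tr\F(g_\omega)=2+q(\omega)^2$, and $g_\omega=\gamma_\omega(\Tmax(\omega))$.
\item Since $\Tmax$ is continuous (Proposition \ref{prp:f-max-t-reg}), $\rho$ is continuous on $(0,1)$.
\item Lemma \ref{lem:expl-form} gives $q(\omega)=\frac{2\sqrt{1-\omega^2}}{\sqrt{1-2\omega^2}}\sinh\bigl(\Tmax(\omega)\sqrt{1-2\omega^2}\bigr)$ for $\omega<\frac{1}{\sqrt2}$, and $0\le q(\omega)\le\frac{2\sqrt{1-\omega^2}}{\sqrt{2\omega^2-1}}$ for $\omega>\frac{1}{\sqrt2}$. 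Together with $\Tmax(\omega)>\frac{\pi}{2\omega}$, this gives $\rho\to\infty$ as $\omega\to0^+$ and $\rho\to0$ as $\omega\to1^-$.
\item By the intermediate value theorem, $\rho$ maps onto $(0,\infty)$, so $\F(\MAS)=\Sym$.
\item Given $P\in\Sym$, pick $h\in\MAS$ with $\F(h)=P^2$. Since $h$ is symmetric negative-definite (the inclusion already proved), $h=-(P^2)^{1/2}=-P$. Hence $-\Sym\subseteq\MAS$.
\end{itemize}
The paper additionally proves $q'<0$, which makes $\F|_{\MAS}$ a diffeomorphism. For the present theorem only surjectivity is needed, and once you have it your argument goes through as written.
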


The proof of Theorem \ref{thm:M-simp-conn} is the most technical part of this section, and we postpone it to the next subsection. For now, assuming Theorem \ref{thm:M-simp-conn}, we prove Theorem \ref{thm:main-3} and derive its consequences.

\subsubsection{Proof of Theorem \ref{thm:main-3} and Its Consequences}\label{subsubsec:proof-cons}

    Lemmas \ref{lem:M-con} and \ref{lem:exp-prp}, together with Theorem \ref{thm:M-simp-conn}, imply that $\Exp\vcentcolon \MD \to \ND$ is a diffeomorphism. This proves Theorem \ref{thm:main-3}.

\begin{proof}[Proof of Corollary \ref{cor:cut-time}]
   As a geodesic cannot remain minimizing beyond its first Maxwell point, $\Tcut (\omega)\leq \Tmax(\omega)$. Conversely, by Theorem \ref{thm:main-3} and the characterization of the Maxwell stratum, no geodesic loses optimality before its first Maxwell time. Hence, $\Tcut (\omega)\geq \Tmax(\omega)$. Therefore, $\Tcut (\omega) = \Tmax(\omega)$. Consequently, $\mathcal{Cut}(\Id) = \MAS$. For $\omega = 0$, $\Tmax(0) = \infty$, so the corresponding geodesics are globally minimizing.
\end{proof}

\begin{proof}[Proof of Corollary \ref{cor:rad-in}]
    By Corollary \ref{cor:cut-time} and Proposition \ref{prp:f-max-t-reg},
    $$ \operatorname{inj}(\Id) = \min_{\omega\in [-1,1]} \Tcut (\omega) = \min_{\omega\in [-1,1]} \Tmax(\omega) = \Tmax(\frac{3}{\sqrt{14}}) = \frac{\pi \sqrt{14}}{4}. $$
\end{proof}

\begin{proof}[Proof of Corollary \ref{cor:cut-luc-inter}]
 By Corollary \ref{cor:cut-time} and Theorem \ref{thm:classic},
 $$\mathcal{Cut}(\Id) \cap \mathcal{Con}(\Id) = \MAS \cap \SO.$$
 By Lemma \ref{lem:g-ome-sym}, every $g \in \MAS$ is symmetric and negative-definite. The only symmetric negative-definite matrix in $\SO$ is $-\Id$. Moreover, $-\Id \in \MAS$. Indeed, since $\Tmax(\pm 1) = \pi$, Eq. \eqref{eq:def-gam-ome} and Lemma \ref{lem:expl-form} imply $\Exp(\pi, \xi_{\pm 1}) = - \Id$. Therefore, 
  $$\mathcal{Cut}(\Id) \cap \mathcal{Con}(\Id) =\{ - \Id\}.$$
\end{proof}

\begin{proof}[Proof of Corollary \ref{cor:equi-opt}]
    Let $\gamma_\omega$ be a geodesic with $\gamma_\omega(0) \in \Gamma_\omega$, and fix $t_0\in \R$. Consider the geodesic $\gamma^1_\omega(t) \coloneqq \gamma_\omega^{-1}(t_0) \gamma_\omega(t+t_0)$. By left-invariance, $\gamma^1_\omega$ is a geodesic starting at the identity. Moreover, $\dot{\gamma}^1_\omega(0) \in \Gamma_\omega$. Hence, by Corollary \ref{cor:cut-time}, 
    $$ \Tcut \big(\Id,\dot{\tilde{\gamma}}_\omega(0)\big) = \Tcut \big(\omega\big). $$
    Since left translations are isometries,
    $$t_{\mathrm{cut}}\big(\gamma_\omega(t_0),\dot{\gamma}_\omega(t_0)\big) = t_{\mathrm{cut}}\big(\Id,\dot{\tilde{\gamma}}_\omega(0)\big) = t_{\mathrm{cut}}(\omega).$$
    Thus, $t_{\mathrm{cut}}\big(\gamma_\omega(t_0),\dot{\gamma}_\omega(t_0)\big)$ is independent of \(t_0\), and therefore \(\gamma_\omega\) is equi-optimal.
\end{proof}

\subsection{Proof of Theorem \ref{thm:M-simp-conn}}\label{subsec:N-simp-conn}

Our approach follows from the Iwasawa decomposition \cite{lang1985sl}. Every $g \in \SLg$ admits a unique factorization 
$$ g = ank\footnote{The decomposition $ANK$ is a variant of the standard \(KAN\) Iwasawa decomposition.}$$
where
\begin{equation}\label{eq:iwa-des}
    \begin{split}
        a = \left[\begin{array}{cc}
    e^s & 0 \\
    0 & e^{-s}
\end{array}\right],\qquad s\in \R,\\
        n = \left[\begin{array}{cc}
    1 & x \\
    0 & 1
\end{array}\right],\qquad x\in \R,\\
    \end{split}
\end{equation}
and
\begin{equation}\label{eq:iwa-des-2}
    \begin{split}
        k = \left[\begin{array}{cc}
    \cos\phi & -\sin\phi \\
    \sin\phi & \cos\phi
\end{array}\right] , \qquad \phi\in \R/2\pi\mathbb{Z}.\\
    \end{split}
\end{equation}
Moreover, the multiplication map
$$ A\times N\times K \to \SLg,\quad (a,n,k) \mapsto ank$$
is a diffeomorphism. In other words, the Iwasawa decomposition identifies $\SLg$ with $\R^2\times \mathbb{S}^1$, where the $AN$-factor corresponds to $\R^2$ and the $K$-factor corresponds to $\mathbb{S}^1 \cong \SO$.

The strategy is as follows. In Lemma \ref{lem:sym-ide}, we identify the factor \(AN\) with $\Sym$, the space of positive-definite symmetric \(2\times2\) matrices of determinant one. We then introduce global coordinates \((\varphi,\vartheta)\in\mathbb R^2\) on $\Sym$ and describe the conjugation action of  \(\SO\) in these coordinates. In Proposition \ref{prp:bij-mas-sym}, we show that the projection of the Maxwell stratum onto the $(\varphi,\vartheta)$-plane is a diffeomorphism. Consequently, the Maxwell stratum can be represented as the graph of a function $\Theta\vcentcolon\R^2 \to K$. Finally, we use this graph representation to prove that $\ND$ is simply connected. 

\subsubsection{The Set of Positive-definite Symmetric Matrices in \(\SLg\)}

We begin by formally defining the set $\Sym$ as
$$\Sym \coloneqq \{ S\in\SLg\vcentcolon S=S^\T,\;\;S \text{ is positive definite}\}. $$

\begin{lemma}\label{lem:sym-ide}
The map $\F\vcentcolon\SLg \to \Sym$ given by $\F(g) = gg^\T$ is well-defined. Furthermore,  the restriction of $\F|_{AN}$ is a diffeomorphism.
\end{lemma}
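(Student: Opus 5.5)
First I check that $\F$ is well defined. For $g\in\SLg$, the matrix $gg^\T$ is symmetric with $\det(gg^\T)=\det(g)^2=1$. It is positive definite because $v^\T gg^\T v=\|g^\T v\|^2>0$ for every $v\neq 0$, since $g^\T$ is invertible. So $\F$ maps into $\Sym$, and it is smooth as a polynomial map in the entries.

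For the restriction to $AN$, I would compute $\F(an)$ explicitly. With $a$ and $n$ as in Eq. \eqref{eq:iwa-des},
$$an=\left[\begin{array}{cc} e^s & e^s x\\ 0 & e^{-s}\end{array}\right],\qquad \F(an)=an\,n^\T a^\T=\left[\begin{array}{cc} e^{2s}(1+x^2) & x\\ x & e^{-2s}\end{array}\right].$$
Injectivity and surjectivity then follow from solving this directly. Given $S=\left[\begin{smallmatrix} p & q\\ q & r\end{smallmatrix}\right]\in\Sym$, we have $r>0$ and $pr-q^2=1$. We set $s=-\tfrac12\log r$ and $x=q$, and then the $(1,1)$ entry is forced: $e^{2s}(1+x^2)=(1+q^2)/r=p$. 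Hence $\F|_{AN}$ is a bijection, with explicit inverse $S\mapsto(s,x)=(-\tfrac12\log r,\;q)$, which is smooth on $\Sym$.

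It remains to note that $\Sym$ is a smooth two-dimensional manifold, so that "diffeomorphism" makes sense. The map $S\mapsto(r,q)$ identifies $\Sym$ with $(0,\infty)\times\R$, because $p=(1+q^2)/r$ is determined by $r$ and $q$ and is automatically positive, which makes $S$ positive definite. In these coordinates $\F|_{AN}$ is $(s,x)\mapsto(e^{-2s},x)$. This is visibly a diffeomorphism $\R^2\to(0,\infty)\times\R$.

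An alternative, more conceptual route is uniqueness of the $ANK$ decomposition. We have $\F(ank)=\F(an)$, and $\F(g)=\F(h)$ exactly when $h^{-1}g\in\SO$. Together these give injectivity on $AN$. Surjectivity follows from taking the symmetric square root $S^{1/2}$: then $\F(S^{1/2})=S$, and we write $S^{1/2}=ank$. The explicit computation is the simpler route. The only mild obstacle is stating the smooth structure on $\Sym$ cleanly, which the $(r,q)$ chart handles.
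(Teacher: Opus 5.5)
Your proof is correct and follows essentially the same route as the paper: compute $\F(an)$ explicitly, then invert by reading $e^{-2s}$ off the $(2,2)$ entry and $x$ off the off-diagonal entry. The only difference is cosmetic. You use the chart $(r,q)$ on $\Sym$, whereas the paper uses $(\varphi,\vartheta)$; the two are related by $r=\sqrt{1+\varphi^2+\vartheta^2}-\varphi$ and $q=\vartheta$. You also check positive definiteness and the forced $(1,1)$ entry explicitly, which the paper leaves implicit.
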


\begin{proof}
    Since $g$ is invertible, $gg^\T$ is symmetric and positive definite. Moreover, $\det(gg^\T) = 1$. Thus, $\F$ is well-defined. Moreover, if $g=ank$ then $\F(g) = ann^\T a$, so $\F(g)$ is independent of the $k$-factor. To express the image of $\F$ in coordinates, we consider $g=an$, where $a$ and $n$ have the form of Eq. \eqref{eq:iwa-des}. It follows that
    $$\F(an)  =\left[\begin{array}{cc}
    e^{2s}(1+x^2) & x \\
    x & e^{-2s}
\end{array}\right].$$

Consider the global coordinates $(\varphi,\vartheta)\in \R^2$ on $\Sym$ and write an element $S$ as   
\begin{equation}\label{eq:def-S}
    S = \left[\begin{array}{cc}
    \sqrt{1+\varphi^2+\vartheta^2}+\varphi & \vartheta \\
    \vartheta & \sqrt{1+\varphi^2+\vartheta^2} - \varphi
\end{array}\right]. 
\end{equation}
Then we have \((e^{-2s},x) = (\sqrt{1+\varphi^2+\vartheta^2} - \varphi, \vartheta)\), i.e.,
\[(s,x)=\left(-\frac{1}{2}\log(\sqrt{1+\varphi^2+\vartheta^2} - \varphi),\vartheta\right),\] 
which determines a unique element $an\in AN$. Hence, the inverse of $\F|_{AN}$ is a diffeomorphism.
\end{proof}

Consequently, $(\F|_{AN})^{-1}\vcentcolon \Sym \to AN$ induces global coordinates on $AN$.

We now consider the conjugation action $\Phi_Q$ of $\SO$ on $\SLg$ and the induced action $\Tilde{\Phi}_Q$ on $\Sym$. Thus $\F$ is an $\SO$-equivariant map. Indeed, if $g\in \SLg$ then
$$ \F(\Phi_Q(g)) = \F(Q^\T gQ) = Q^\T g g^\T Q = \Tilde{\Phi}_Q(\F(g)).  $$
Moreover, if $Q_{\phi} \in \SO$ is given by Eq. \eqref{eq:iwa-des-2}, then the $\SO$-action on $\Sym$ takes the form of a rotation by $-2\phi$ on  the $(\varphi,\vartheta)$-plane. Indeed, if $S\in \Sym$ has the form from Eq. \eqref{eq:def-S}, then a direct computation shows that 
$$ \Tilde{\Phi}(S) = \left[\begin{array}{cc}
    \sqrt{1+\varphi^2+\vartheta^2}+ \tilde{\varphi} & \tilde{\vartheta} \\
    \tilde{\vartheta} & \sqrt{1+\varphi^2+\vartheta^2} - \tilde{\varphi}
\end{array}\right].  $$
Here, $(\tilde{\varphi},\tilde{\vartheta}) = (\varphi \cos(2\phi)+\vartheta\sin(2\phi),-\varphi \sin(2\phi) + \vartheta\cos(2\phi))$.
Therefore, the squared radius $\varphi^2+\vartheta^2$ is invariant under the $\SO$-action on $\Sym$.

Define $g_\omega = \gamma_\omega(\Tmax)$. It follows that 
$$ \MAS = \big\{ \Phi_{Q_\phi}(g_\omega)  \vcentcolon \omega \in (0,1)\;\;\&\;\; \phi\in [0,\pi) \big\} \cup \{ -\Id\}.$$
Thus, the orbit of $\F(g_\omega)$ by the $\SO$-action is a circle on the $(\varphi,\vartheta)$-plane, whose radius only depends on $\omega$. We denote this radius by $\rho\vcentcolon(0,1) \to [0,\infty)$. To prove that the projection of the Maxwell stratum onto the $(\varphi,\vartheta)$-plane is a diffeomorphism, we study the function $\rho$. Now instead of working with $\rho$, we define an auxiliary function $q\vcentcolon(0,1)\to [0,\infty)$. 

From Eq. \eqref{eq:def-S}, we have 
$$ \Tr(S) = 2\sqrt{1+\varphi^2+\vartheta^2} \;\;\Rightarrow\;\; \varphi^2+\vartheta^2 = \frac{1}{4}\Tr(S)^2-1 .$$
We define $q$ by
\begin{equation}\label{eq:def-q}
    \Tr(\F(g_\omega)) \coloneqq 2+q^2(\omega) \;\;\Rightarrow\;\; \rho^2(\omega)=q^2(\omega)\left(1+\frac{1}{4}q^2(\omega)\right).
\end{equation}
The function $q\vcentcolon(0,1)\to [0,\infty)$ is well-defined since $\Tr(S)\geq 2$. Moreover, \(\rho\) and \(q\) have the same monotonicity.

\begin{Prop}\label{prp:bij-mas-sym}
    The function $q\vcentcolon(0,1)\to [0,\infty)$ defined in Eq. \eqref{eq:def-q} satisfies
    \[
q'(\omega)<0,\qquad 0<\omega<1,
\]
and
\[
\lim_{\omega\to0^+}q(\omega)=+\infty,
\qquad
\lim_{\omega\to1^-}q(\omega)=0.
\]
Consequently, $\rho\vcentcolon(0,1)\to(0,\infty)$ is a strictly decreasing bijection and $\F\vcentcolon\MAS \to \Sym$ is a diffeomorphism. 
\end{Prop}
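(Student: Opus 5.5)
The plan is to compute $q(\omega)$ in closed form from Lemma \ref{lem:expl-form}. We then read off the two limits directly and obtain the sign of $q'$ from the implicit derivative of $\Tmax$ in Lemmas \ref{lem:dt-max-dome-1} and \ref{lem:dt-max-dome-2}.

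\textbf{Closed form for $q$.} Write $g_\omega=\gamma_\omega(T)$ with $T=\Tmax(\omega)$. By Lemma \ref{lem:expl-form}, $g_\omega=P\,R$ with $R=\exp(2\omega T\xi_3)\in\SO$, so $\F(g_\omega)=PP^\T$.
\begin{itemize}
\item For $\omega\in(\frac{1}{\sqrt2},1)$ we have $P=c\,\Id+\frac{s}{\alpha}\xi_\omega^\T$, where $\alpha=\sqrt{2\omega^2-1}$, $c=\cos(\alpha T)$ and $s=\sin(\alpha T)$.
\item Since $\Tr(\xi_\omega)=0$ and $\Tr(\xi_\omega^\T\xi_\omega)=2$, the cross terms are traceless. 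Hence $\Tr \F(g_\omega)=2c^2+2s^2/\alpha^2$.
\item Using $1/\alpha^2-1=2(1-\omega^2)/\alpha^2$, this gives
$$q(\omega)=\frac{2\sqrt{1-\omega^2}\,\sin(\alpha T)}{\alpha}.$$
The sign is positive because $\alpha T<\pi$, by Lemma \ref{lem:fir-max-tim-1} and $\alpha<\omega$.
\item The same computation for $\omega\in(0,\frac1{\sqrt2})$ gives $q=2\sqrt{1-\omega^2}\sinh(\alpha T)/\alpha$ with $\alpha=\sqrt{1-2\omega^2}$.
\item At $\omega=\frac1{\sqrt2}$ it gives $q=\sqrt2\,T$.
\end{itemize}
These formulas match continuously across $\omega=\frac{1}{\sqrt2}$, since $\Tmax$ is continuous there.

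\textbf{Limits.} As $\omega\to1^-$, the factor $\sqrt{1-\omega^2}\to0$. The factor $\sin(\alpha T)/\alpha$ stays bounded because $\alpha\to1$ and $T\to\pi$. Hence $q\to0$. As $\omega\to0^+$, we have $\alpha\to1$ and $T>\pi/(2\omega)\to\infty$ by Theorem \ref{thm:Max-time}. Hence $\sinh(\alpha T)\to\infty$ and $q\to+\infty$.

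\textbf{Monotonicity (main obstacle).} I will compute $\frac{d}{d\omega}\log q$ on each regime. For example, in the hyperbolic regime,
$$\frac{d}{d\omega}\log q=-\frac{\omega}{1-\omega^2}+\frac{2\omega}{\alpha^2}+\coth(\alpha T)\Big(\alpha\,\frac{dT}{d\omega}-\frac{2\omega T}{\alpha}\Big).$$
Into this I substitute $dT/d\omega$ from Eq. \eqref{eq:dt-mx-domega-2}, with $x=\alpha T$. The trigonometric regime is handled the same way using Eq. \eqref{eq:dt-mx-domega-1}, or Eq. \eqref{eq:dt-mx-domega-1-1} at $\omega=3/\sqrt{14}$. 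In each case I will clear denominators and reduce the claim $q'<0$ to an inequality in $x$ and $\omega$ alone. I expect to close it with the elementary bounds already used in the paper: $\tanh x<x$, $\tan x>x$ on $(0,\frac\pi2)$, $\tan x<0$ on $(\frac\pi2,\pi)$, $2(1-\omega^2)\gtrless1$, and the ordering $x<\omega T<\pi$. The trigonometric case with $\alpha T\in(\frac\pi2,\pi)$, where $\tan$ changes sign and $\Tmax$ is increasing, is where I expect the real difficulty. Near $\omega=\frac1{\sqrt2}$ I will instead use the $\varepsilon$-expansion from the proof of Proposition \ref{prp:f-max-t-reg}, which gives the common value of the one-sided derivatives. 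Continuity of $q'$ then extends strict negativity across that point.

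\textbf{Consequence for $\F$.} Strict decrease of $q$ together with the limits shows that $q$, and hence $\rho$, is a bijection $(0,1)\to(0,\infty)$. The $\SO$-orbit of $\F(g_\omega)$ is the full circle of radius $\rho(\omega)$ in the $(\varphi,\vartheta)$-plane, and $-\Id$ maps to the origin. Therefore $\F(\MAS)=\Sym$. On $\MAS$, which consists of symmetric negative-definite matrices by Lemma \ref{lem:g-ome-sym}, we have $\F(g)=g^2$. This map has the smooth inverse $S\mapsto-\sqrt S$, so $\F\colon\MAS\to\Sym$ is a diffeomorphism.
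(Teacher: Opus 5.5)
\textbf{Where the proposal falls short.} Your route is the paper's: closed form for $q$, limits from $\Tmax\in(\frac{\pi}{2\omega},\frac{\pi}{\omega}]$, logarithmic differentiation with $\frac{d}{d\omega}\Tmax$ substituted, and the inverse $S\mapsto-\sqrt S$. Your closed form, limits and final paragraph are correct; the last is a slightly cleaner version of the paper's radial--angular argument.

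\textbf{The missing cancellation.} The step that carries the proposition, $q'<0$, is only promised, and the idea that makes it work is missing: after substitution, the $\omega$-only terms cancel exactly. In the trigonometric regime, put $\beta=\alpha\Tmax$ and $z=\tan\beta$, and let $N=2(1-\omega^2)\beta-z$ be the numerator in Eq.~\eqref{eq:dt-mx-domega-1}. Write $\Tmax=(N+z)/(2\alpha(1-\omega^2))$ inside $\frac{1}{z}\cdot\frac{2\omega\Tmax}{\alpha}$. Then $-\frac{\omega}{1-\omega^2}-\frac{2\omega}{\alpha^2}+\frac{\omega}{\alpha^2(1-\omega^2)}$ sums to zero, leaving
\[
\frac{q'}{q}=\frac{(\alpha^2+\omega^2z^2)\,\bigl[2(1-\omega^2)\beta-\tan\beta\bigr]}{\omega\,\alpha^2(1-\omega^2)(\alpha^2+z^2)\,\tan\beta}.
\]
The hyperbolic regime has the analogous cancellation, with $\tanh$ in place of $\tan$ and numerator $\tanh\beta-2(1-\omega^2)\beta$. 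So $\operatorname{sign}q'=\operatorname{sign}(N/z)$. The proof is finished by $\tan x>x$ with $2(1-\omega^2)<1$, respectively $\tanh x<x$ with $2(1-\omega^2)>1$.

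\textbf{Where the difficulty actually lies.} You placed it in the wrong case. For $\beta\in(\frac{\pi}{2},\pi)$ one has $\Tmax'>0$, hence $\beta'=\frac{2\omega\Tmax}{\alpha}+\alpha\Tmax'>0$ and $\cot\beta<0$. So every term of the log-derivative is already negative there. The cases that genuinely need the cancellation are $\beta\in(0,\frac{\pi}{2})$ and the whole hyperbolic regime, where $\Tmax'<0$ and the terms have mixed signs.

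\textbf{The point $\omega=\frac{1}{\sqrt2}$.} Continuity of $q'$ does not ``extend strict negativity''; it only gives $q'(\frac{1}{\sqrt2})\le 0$, so you need the value. Write $q=2\sqrt{1-\omega^2}\,E(\Tmax,2\omega^2-1)$, where
\[
E(t,s)=\sum_{k\ge 0}\frac{(-1)^k t^{2k+1}s^k}{(2k+1)!}
\]
is entire. It equals $\sin(t\sqrt s)/\sqrt s$ for $s>0$ and $\sinh(t\sqrt{-s})/\sqrt{-s}$ for $s<0$. The chain rule, together with the $C^1$ regularity of $\Tmax$ from Proposition~\ref{prp:f-max-t-reg}, gives $q'(\frac{1}{\sqrt2})=-2\Tmax+\sqrt2\,\Tmax'-\frac{2}{3}\Tmax^3$. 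This is negative because $\Tmax'(\frac{1}{\sqrt2})\le 0$. For the bijection alone, $q'\le 0$ at one point would suffice, but the statement asserts $q'<0$ on all of $(0,1)$.
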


\subsubsection{Proof of Proposition \ref{prp:bij-mas-sym}}
We begin by computing the function $q$ explicitly for the different regimes of $\omega$. 

\begin{lemma}
The function $q\vcentcolon(0,1) \to (0,\infty)$ has the form
\begin{equation}\label{eq:q-def}
    q(\omega)=
\begin{cases}
\displaystyle
\frac{2\sqrt{1-\omega^2}}{\sqrt{2\omega^2-1}}\,

\sin\!\left(\Tmax(\omega)\sqrt{2\omega^2-1}\right)
,
&\;\;\text{if} \;\; \omega \in (\frac{1}{\sqrt{2}},1),
\\[4mm]
\displaystyle
\sqrt{2}\,\Tmax(\omega),
& \;\;\text{if} \;\; \omega=\dfrac1{\sqrt2},
\\[4mm]
\displaystyle
\frac{2\sqrt{1-\omega^2}}{\sqrt{1-2\omega^2}}\,
\sinh\!\left(\Tmax(\omega)\sqrt{1-2\omega^2}\right),
& \;\;\text{if} \;\; \omega\in(0,\frac{1}{\sqrt{2}}).
\end{cases}
\end{equation}    
\end{lemma}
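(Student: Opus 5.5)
The plan is to compute $\Tr(\F(g_\omega))$ directly from the explicit geodesic formula of Lemma \ref{lem:expl-form}, evaluated at $t=\Tmax(\omega)$ with $\xi=\xi_\omega$. Write $g_\omega=\gamma_\omega(\Tmax(\omega))=P_\omega R_\omega$, where $R_\omega=\exp(2\omega\Tmax(\omega)\xi_3)\in\SO$ and
$$P_\omega=c\,\Id+\tfrac{s}{\alpha}\,\xi_\omega^\T,$$
with $(c,s,\alpha)$ equal to $(\cos(\Tmax\alpha),\sin(\Tmax\alpha),\sqrt{2\omega^2-1})$, or $(1,\Tmax,1)$, or $(\cosh(\Tmax\alpha),\sinh(\Tmax\alpha),\sqrt{1-2\omega^2})$ in the three regimes. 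Since $R_\omega R_\omega^\T=\Id$, we get $\F(g_\omega)=P_\omega P_\omega^\T$. Hence
$$\Tr(\F(g_\omega))=\Tr(P_\omega P_\omega^\T)=2\|P_\omega\|^2,$$
using the extended norm $\|A\|^2=\frac12\Tr(AA^\T)$ from the proof of Lemma \ref{lem:exp-prp}.

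Next, observe that $\Id,\xi_1,\xi_3$ are orthonormal for this inner product: $\Id$ has norm $1$ and is orthogonal to traceless matrices. Also $\xi_\omega^\T=\sqrt{1-\omega^2}\,\xi_1-\omega\xi_3$ has norm $1$. Therefore
$$\Tr(\F(g_\omega))=2c^2+\frac{2s^2}{\alpha^2}.$$

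The three regimes are then handled as follows.
\begin{itemize}
\item For $\omega\in(\frac1{\sqrt2},1)$, use $c^2=1-s^2$ and $1-\alpha^2=2(1-\omega^2)$. This gives $\Tr=2+4(1-\omega^2)s^2/\alpha^2$.
\item For $\omega\in(0,\frac1{\sqrt2})$, use $c^2=1+s^2$ and $1+\alpha^2=2(1-\omega^2)$. This gives the same expression $\Tr=2+4(1-\omega^2)s^2/\alpha^2$, now with $s=\sinh(\Tmax\alpha)$.
\item For $\omega=\frac1{\sqrt2}$, we directly get $\Tr=2+2\Tmax^2$.
\end{itemize}
Comparing with the definition $\Tr(\F(g_\omega))=2+q^2(\omega)$ in Eq. \eqref{eq:def-q} yields $q^2$ in each case. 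We then take the nonnegative square root.

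The only point needing care is the sign when extracting $q$ as written, without absolute values. In the hyperbolic and parabolic regimes, positivity is clear because $\Tmax>0$. In the trigonometric regime we must check $\sin(\Tmax(\omega)\sqrt{2\omega^2-1})>0$. By Lemma \ref{lem:fir-max-tim-1}, $\Tmax(\omega)<\pi/\omega$. Combined with $\sqrt{2\omega^2-1}<\omega$ for $\omega<1$, this gives $0<\Tmax\sqrt{2\omega^2-1}<\pi$, so the sine is positive.

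I expect no real obstacle. The main risk is bookkeeping: verifying $\F(g_\omega)=P_\omega P_\omega^\T$ given the order of factors in Lemma \ref{lem:expl-form}, and checking the orthonormality normalization of $\Id$ under $\frac12\Tr$.
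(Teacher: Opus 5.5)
Your proposal is correct and follows essentially the same route as the paper: you drop the rotation factor in $g_\omega g_\omega^\T$, compute $\Tr(\F(g_\omega))=2c^2+2s^2/\alpha^2$ from the explicit geodesic formula, simplify using $2(1-\omega^2)$, and use $0<\Tmax(\omega)\sqrt{2\omega^2-1}<\pi$ to fix the sign of the sine. Your use of the orthonormality of $\Id,\xi_1,\xi_3$ under $\tfrac12\Tr$ replaces the paper's direct trace expansion with the same computation in cleaner form, and you also write out the parabolic and hyperbolic cases and the positivity of the sine, which the paper leaves implicit.
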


\begin{proof}
Consider $\omega \in (\frac{1}{\sqrt{2}},1)$. Recall the explicit formula for $g_{\omega}\coloneqq\gamma_\omega(\Tmax(\omega))$ from Lemma \ref{lem:expl-form}, and the vector $\xi_\omega\in \sla$ defined in Eq. \eqref{eq:xi-ome}. For simplicity, set $\alpha=\sqrt{2\omega^2-1}$ and $\beta = \Tmax(\omega)\alpha$. Then
\begin{equation*}
\begin{split}
    \Tr(\F(g_\omega)) & = \Tr\!\Big(\cos^2\beta \Id + \frac{2}{\alpha}\cos\beta\sin\beta\sqrt{1-\omega}(\xi_\omega+\xi_\omega^\T) + \frac{\sin^2\beta}{\alpha^2}\xi_\omega\xi_\omega^\T \Big)  \\
    & = 2\Big(\cos^2\beta +  \frac{\sin^2\beta}{\alpha^2}\Big) = 2 +2\sin^2\beta(\frac{1}{\alpha^2}-1) = 2+\frac{2\sin^2\beta}{\alpha^2}(1-\alpha^2)\\
    & = 2 +\frac{4(1-\omega^2)}{2\omega^2-1}\sin\!\left(\Tmax(\omega)\sqrt{2\omega^2-1}\right) . 
\end{split}
\end{equation*}
We remark that for $\omega \in (\frac{1}{\sqrt{2}},1)$ we have $\Tmax(\omega)\sqrt{2\omega^2-1}\in (0,\pi)$. Hence, the sine is positive, and Eq. \eqref{eq:def-q} yields the desired expression. The remaining cases follow the same strategy. 
\end{proof}

To prove Proposition \ref{prp:bij-mas-sym}, we show the properties of the function $q$ in the following lemmas, one for each regime of $\omega$.

\begin{lemma}\label{lem:q-prp-1}
    For $\omega \in (\frac{1}{\sqrt{2}},1)$, the function $q(\omega)$ satisfies $\frac{dq}{d\omega}<0$. 
\end{lemma}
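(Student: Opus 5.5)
The plan is to differentiate $q$ directly, feeding in the derivative of $\Tmax$ from Lemma \ref{lem:dt-max-dome-1}. Set $\alpha=\sqrt{2\omega^2-1}$ and $\beta(\omega)=\Tmax(\omega)\alpha$. By the explicit formula in Eq. \eqref{eq:q-def} and the fact that $\beta\in(0,\pi)$, we have $q>0$, and
$$\log q=\log 2+\tfrac12\log(1-\omega^2)-\log\alpha+\log\sin\beta .$$
Since $\alpha'=2\omega/\alpha$, this gives
$$\frac{q'}{q}=-\frac{\omega}{1-\omega^2}-\frac{2\omega}{\alpha^2}+\beta'\cot\beta,\qquad \beta'=\alpha\,\Tmax'+\frac{2\omega\Tmax}{\alpha}.$$
The first two terms are strictly negative on $(\frac{1}{\sqrt2},1)$. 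So it suffices to show that $\beta'\cot\beta\le 0$, or, when it is positive, that it is dominated by the first two terms. By Proposition \ref{prp:f-max-t-reg}, $\Tmax$ is $C^1$, so all of these derivatives exist.

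I would split into three cases according to the position of $\beta$ relative to $\frac{\pi}{2}$, exactly as in the proof of Proposition \ref{prp:dt-dome-neg-1}.

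\textbf{Case $\beta=\frac{\pi}{2}$.} This happens only at $\omega=\frac{3}{\sqrt{14}}$. There $\cot\beta=0$, so $q'/q<0$ immediately.

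\textbf{Case $\beta\in(\frac{\pi}{2},\pi)$.} The proof of Proposition \ref{prp:dt-dome-neg-1} gives $\Tmax'>0$, and $\alpha'>0$, so $\beta'>0$. Since $\cot\beta<0$, the product $\beta'\cot\beta$ is negative and we are done.

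\textbf{Case $\beta\in(0,\frac{\pi}{2})$.} This is the main obstacle, because here $\Tmax'<0$ and the sign of $\beta'$ is not clear a priori. Write $\tau=\tan\beta>0$ and substitute Eq. \eqref{eq:dt-mx-domega-1}:
$$\beta'=\frac{2\alpha(1-\omega^2)\Tmax-\tau}{\omega(\alpha^2+\tau^2)}+\frac{2\omega\Tmax}{\alpha}.$$
The condition $q'<0$ then becomes
$$\frac{2\alpha(1-\omega^2)\Tmax-\tau}{\omega\tau(\alpha^2+\tau^2)}+\frac{2\omega\Tmax}{\alpha\tau}<\frac{\omega}{1-\omega^2}+\frac{2\omega}{\alpha^2}.$$
The first term on the left is negative, since $2\alpha(1-\omega^2)\Tmax<\beta<\tau$ (this is the inequality used in Proposition \ref{prp:dt-dome-neg-1}). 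It therefore suffices to prove $\frac{2\omega\Tmax}{\alpha\tau}\le\frac{2\omega}{\alpha^2}$, that is, $\beta\le\tau$. This holds because $\tan x>x$ on $(0,\frac{\pi}{2})$, and it even leaves the strictly positive margin $\frac{\omega}{1-\omega^2}$. If this crude estimate turned out to lose a sign somewhere, I would fall back on the Maxwell relation $\tan(2\omega\Tmax)=\frac{\omega}{\alpha}\tan\beta$ together with the bound $\Tmax\in(\frac{\pi}{2\omega},\frac{\pi}{\omega})$ from Lemma \ref{lem:fir-max-tim-1}, which pins down $2\omega\Tmax\in(\pi,\frac{3\pi}{2})$.

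Combining the three cases gives $q'<0$ on all of $(\frac{1}{\sqrt2},1)$.
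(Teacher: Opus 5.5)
Your proof is correct and follows essentially the same route as the paper: logarithmic differentiation of $q$, substitution of $\frac{d}{d\omega}\Tmax$ from Lemma \ref{lem:dt-max-dome-1}, and a case split on $\beta=\alpha\Tmax$ relative to $\frac{\pi}{2}$, using $\tan x>x$ and $2(1-\omega^2)<1$. The only difference is bookkeeping: the paper simplifies $\frac{1}{q}\frac{dq}{d\omega}$ exactly to a positive factor times $\frac{2(1-\omega^2)\beta-\tan\beta}{\tan\beta}$, whereas you bound term by term when $\beta<\frac{\pi}{2}$ and reuse the sign $\frac{d}{d\omega}\Tmax>0$ from Proposition \ref{prp:dt-dome-neg-1} when $\beta>\frac{\pi}{2}$.
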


\begin{proof}
    Consider the case $\omega\in (\frac{1}{\sqrt{2}},1)$. Let us define the auxiliary functions,
    $$ \alpha(\omega)\coloneqq\sqrt{2\omega^2-1},\;\;\beta(\omega)\coloneqq \alpha (\omega) \Tmax(\omega),\;\; z(\omega)\coloneqq \tan \beta(\omega).  $$
    Then we rewrite $q$ as
    \begin{equation}\label{eq:q-omega-1}
        q(\omega) = \frac{2\sqrt{1-\omega^2}}{\alpha(\omega)} \sin \beta(\omega).
    \end{equation}
    For $\omega \in (\frac{1}{\sqrt{2}},1)$ we have $0<\beta(\omega)<\pi$, so $\sin \beta(\omega)>0$. If $\beta(\omega) = \frac{\pi}{2}$, it follows that 
    $$\frac{dq}{d\omega} = \frac{d}{d\omega}\Big(\frac{2\sqrt{1-\omega^2}}{\alpha(\omega)}\Big) = - \frac{2\omega}{\sqrt{1-\omega^2}\alpha^3}<0.$$ 
    Therefore, we proceed with the case $\beta(\omega) \neq \frac{\pi}{2}$. 
    We rewrite the expression for $\frac{dq}{d\omega}$ in Eq. \eqref{eq:dt-mx-domega-1} in terms of the auxiliary functions
    $$ \frac{d}{d\omega} \Tmax = \frac{2\alpha(1-\omega^2)\Tmax(\omega)-z}
{\alpha\omega(\alpha^2+z^2)}.$$

Since $q(\omega)>0$, logarithmic differentiation of Eq. \eqref{eq:q-omega-1}  gives
$$\frac{1}{q}\frac{dq}{d\omega} = -\frac{\omega}{1-\omega^2} - \frac{1}{\alpha(\omega)} \frac{d\alpha}{d\omega} + \cot \beta(\omega) \frac{d\beta}{d\omega}.$$
The definitions of $\alpha(\omega)$, $\beta(\omega)$, and $z(\omega)$ give
$$ \frac{d\alpha}{d\omega} = \frac{2\omega}{\alpha},\;\;\; \frac{d\beta}{d\omega} = \frac{2\omega}{\alpha}\Tmax(\omega) + \alpha(\omega)\frac{d}{d\omega} \Tmax(\omega),\;\; \cot \beta = \frac{1}{z}.  $$
Therefore,
\begin{equation}\label{eq:dq-dome}
    \begin{split}
       \frac{1}{q}\frac{dq}{d\omega} & = - \frac{\omega}{1-\omega^2} - \frac{2\omega}{\alpha^2(\omega)}+\frac{1}{z}\Big(\frac{2\omega}{\alpha}\Tmax(\omega) + \alpha(\omega)\frac{d}{d\omega} \Tmax(\omega)\Big)\\
       & =\frac{
(\alpha^2+\omega^2z^2)
\left[
2\alpha(1-\omega^2)\Tmax(\omega)-z
\right]
}
{
\omega z(1-\omega^2)\alpha^2(\alpha^2+z^2)
}. 
    \end{split}
\end{equation}

In Eq. \eqref{eq:dq-dome}, the terms are positive except for the expression
$$\frac{2\alpha(1-\omega^2)\Tmax(\omega)-z}{z} = \frac{2(1-\omega^2)\beta-\tan\beta}{\tan\beta}. $$
Thus, it suffices to prove that the above expression is negative. We proceed by the cases $\beta\in (0,\frac{\pi}{2})$ and $\beta\in (\frac{\pi}{2},\pi)$.

\textit{Case $\beta\in (0,\frac{\pi}{2})$.} In this case, we have $\tan \beta(\omega)>\beta(\omega)>0$ and $2(1-\omega^2) < 1$. Therefore
\begin{equation*}
    \begin{split}
        2(1-\omega^2)\beta(\omega)<\tan\beta(\omega) \;\;\Rightarrow\;\; 2(1-\omega^2)\beta(\omega) - \tan\beta(\omega)<0. 
    \end{split}
\end{equation*}
Hence $\frac{dq}{d\omega}<0$.

\textit{Case $\beta\in (\frac{\pi}{2},\pi)$.} In this case, we have $z=\tan\beta(\omega) < 0$ and $2(1-\omega^2)\beta(\omega)>0$. Thus
$$ 2(1-\omega^2)\beta(\omega) - \tan\beta(\omega)>0.$$
Dividing by $\tan \beta(\omega)$ gives $\frac{dq}{d\omega}<0$.
Thus, in either case $\frac{dq}{d\omega}<0$. \end{proof}

\begin{lemma}\label{lem:q-prp-2}
    For $\omega \in (0,\frac{1}{\sqrt{2}})$, the function $q(\omega)$ satisfies $\frac{dq}{d\omega}<0$. 
\end{lemma}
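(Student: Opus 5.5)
The argument mirrors Lemma \ref{lem:q-prp-1}, now in the hyperbolic regime. For $\omega\in(0,\frac{1}{\sqrt{2}})$ we set
$$\alpha(\omega)\coloneqq\sqrt{1-2\omega^2},\qquad \beta(\omega)\coloneqq\alpha(\omega)\Tmax(\omega),\qquad z(\omega)\coloneqq\tanh\beta(\omega).$$
By Eq. \eqref{eq:q-def},
$$q(\omega)=\frac{2\sqrt{1-\omega^2}}{\alpha}\sinh\beta .$$
Since $\beta>0$, we have $q>0$, so logarithmic differentiation is legitimate:
$$\frac{1}{q}\frac{dq}{d\omega}=-\frac{\omega}{1-\omega^2}-\frac{\alpha'}{\alpha}+\coth\beta\,\beta'.$$
The ingredients are:
\begin{itemize}
\item $\alpha'=-2\omega/\alpha$;
\item $\beta'=\alpha'\Tmax+\alpha\,\Tmax'$;
\item $\Tmax'$ given by Lemma \ref{lem:dt-max-dome-2}, which in the new notation reads
$$\Tmax'=\frac{z-2\alpha(1-\omega^2)\Tmax}{\alpha\omega(\alpha^2+z^2)}.$$
\end{itemize}
The sign of $\alpha'$ is opposite to the elliptic case, and this will change several signs below.

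Next, I would substitute these into the log-derivative and combine everything over a common denominator. I expect a factorization analogous to Eq. \eqref{eq:dq-dome}, roughly
$$\frac{1}{q}\frac{dq}{d\omega}=\frac{(\alpha^2+\omega^2 z^2)\,\big[z-2\alpha(1-\omega^2)\Tmax\big]}{\omega z(1-\omega^2)\alpha^2(\alpha^2+z^2)}\cdot(\pm 1),$$
possibly with $\alpha^2-\omega^2z^2$ or a similar variant as the first factor. The constant term arising from $-\frac{\omega}{1-\omega^2}+\frac{2\omega}{\alpha^2}$ should combine with the $\Tmax$ terms into this form, using $1-\alpha^2=2\omega^2$ and $2(1-\omega^2)=1+\alpha^2$. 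This algebraic simplification is the main obstacle.

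My first attempt would be to multiply through by $z\,\omega(1-\omega^2)\alpha^2(\alpha^2+z^2)$ and collect the coefficients of $\Tmax$ and of the $\Tmax$-free terms separately, checking that each is a multiple of a common factor. If no clean factorization appears, I would fall back on proving negativity directly. For that I would use $z=\tanh\beta<\beta$ and the Maxwell relation
$$\tan(2\omega\Tmax)=\frac{\omega}{\alpha}\tanh\beta,$$
which holds by Lemma \ref{lem:dt-max-dome-2}, since $\cos(2\omega\Tmax)\neq 0$.

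Granting the factorization, the sign is read off from the bracket
$$\frac{z-2\alpha(1-\omega^2)\Tmax}{z}=\frac{\tanh\beta-2(1-\omega^2)\beta}{\tanh\beta}.$$
Here $2(1-\omega^2)>1$ and $0<\tanh\beta<\beta$ for $\beta>0$, so the numerator is strictly negative while $\tanh\beta>0$. This is the same inequality used in Proposition \ref{prp:dt-dome-neg-2}.

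Every other factor is manifestly positive ($\omega$, $1-\omega^2$, $\alpha^2$, $\alpha^2+z^2$, $\alpha^2+\omega^2z^2$). Hence $\frac{dq}{d\omega}<0$ on $(0,\frac{1}{\sqrt{2}})$, provided the overall sign in the factorization comes out as $+1$. I would confirm that sign with the small-$\omega$ check: as $\omega\to0^+$, $q$ should blow up, consistent with $q$ decreasing.
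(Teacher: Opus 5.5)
\textbf{The gap.} Your route is the paper's route: you write $q=\frac{2\sqrt{1-\omega^2}}{\alpha}\sinh\beta$, take the logarithmic derivative, insert $\Tmax'$ from Lemma \ref{lem:dt-max-dome-2}, and read off the sign from $\tanh\beta-2(1-\omega^2)\beta<0$. However, you never carry out the step you yourself call the main obstacle, and the conclusion rests entirely on it. You leave open both the overall sign and whether the first factor is $\alpha^2+\omega^2z^2$, which is positive, or something like $\alpha^2-\omega^2z^2$, whose sign is indefinite. Your proposed sign check does not work. That $q\to\infty$ as $\omega\to0^+$ is consistent with $q'<0$ but does not imply it, so using it to choose the sign is circular. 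The fallback via the Maxwell relation is not yet an argument. As written, the proposal reduces the lemma to an unverified identity.

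\textbf{How to close it.} The computation works out exactly as you guessed. Put $N\coloneqq z-2\alpha(1-\omega^2)\Tmax$. Then $\alpha\Tmax'=\frac{N}{\omega(\alpha^2+z^2)}$. Substituting $\Tmax=\frac{z-N}{2\alpha(1-\omega^2)}$ gives
\[
-\frac{2\omega\Tmax}{\alpha z}=-\frac{\omega}{\alpha^2(1-\omega^2)}+\frac{\omega N}{z\alpha^2(1-\omega^2)}.
\]
The $N$-free terms cancel identically, because $\alpha^2=1-2\omega^2$:
\[
-\frac{\omega}{1-\omega^2}+\frac{2\omega}{\alpha^2}-\frac{\omega}{\alpha^2(1-\omega^2)}=\frac{\omega\big(-\alpha^2+2(1-\omega^2)-1\big)}{\alpha^2(1-\omega^2)}=0 .
\]
What remains is
\[
\frac{1}{q}\frac{dq}{d\omega}=\frac{N}{z}\Big(\frac{\omega}{\alpha^2(1-\omega^2)}+\frac{1}{\omega(\alpha^2+z^2)}\Big)=\frac{(\alpha^2+\omega^2z^2)\,N}{\omega z(1-\omega^2)\alpha^2(\alpha^2+z^2)},
\]
using $\omega^2(\alpha^2+z^2)+\alpha^2(1-\omega^2)=\alpha^2+\omega^2z^2$. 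Your plan of collecting the $\Tmax$-free and $\Tmax$-coefficients separately also works: both turn out to be multiples of $\alpha^2+\omega^2z^2$. This is the paper's Eq.~\eqref{eq:dq-dome-1}, with overall sign $+1$ and a manifestly positive first factor. With it in hand, your final inequality $\tanh\beta<\beta<2(1-\omega^2)\beta$ completes the proof.
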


\begin{proof}
    Consider $\omega \in (0,\frac{1}{\sqrt{2}})$. Let
    $$ \alpha(\omega)\coloneqq\sqrt{1-2\omega^2},\;\;\beta(\omega)\coloneqq \alpha (\omega) \Tmax(\omega),\;\; h(\omega)\coloneqq \tanh \beta(\omega).  $$
    Then we rewrite $q(\omega)$ as
    \begin{equation}\label{eq:q-omega-2}
        q(\omega) = \frac{2\sqrt{1-\omega^2}}{\alpha(\omega)} \sinh \beta(\omega).
    \end{equation}
    We rewrite the expression for $\frac{dq}{d\omega}$ in Eq. \eqref{eq:dt-mx-domega-2} in terms of the auxiliary functions
    $$ \frac{d}{d\omega} \Tmax = \frac{h(\omega)-2\alpha(\omega)(1-\omega^2)\Tmax(\omega)}
{\alpha(\omega)\omega(\alpha^2(\omega)+h^2(\omega))}.$$

Now compute the logarithmic differentiation for Eq. \eqref{eq:q-omega-2}  to obtain
$$\frac{1}{q}\frac{dq}{d\omega} = -\frac{\omega}{1-\omega^2} - \frac{1}{\alpha(\omega)} \frac{d\alpha}{d\omega} + \coth \beta(\omega) \frac{d\beta}{d\omega}.$$
The definition of $\alpha(\omega)$, $\beta(\omega)$, and $h(\omega)$ give
$$ \frac{d\alpha}{d\omega} = -\frac{2\omega}{\alpha(\omega)},\;\;\; \frac{d\beta}{d\omega} = -\frac{2\omega}{\alpha(\omega)}\Tmax(\omega) + \alpha(\omega)\frac{d}{d\omega} \Tmax(\omega),\;\; \coth \beta(\omega) = \frac{1}{h}.  $$
Therefore,
\begin{equation}\label{eq:dq-dome-1}
    \begin{split}
       \frac{1}{q}\frac{dq}{d\omega} & =  -\frac{\omega}{1-\omega^2} + \frac{2\omega}{\alpha^2(\omega)}+\frac{1}{h(\omega)}\Big(-\frac{2\omega}{\alpha(\omega)}\Tmax(\omega) + \alpha(\omega)\frac{d}{d\omega} \Tmax(\omega)\Big)\\
       & =\frac{(\alpha^2(\omega)+\omega^2h^2(\omega))\left[h(\omega)-
2\alpha(\omega)(1-\omega^2)\Tmax(\omega)
\right]
}
{
\omega h(\omega)(1-\omega^2)\alpha^2(\omega)(\alpha^2(\omega)+h^2(\omega))
}. 
    \end{split}
\end{equation}
In Eq. \eqref{eq:dq-dome-1}, the terms are positive except for the expression
$$h-2\alpha(1-\omega^2)\Tmax(\omega) = \tanh\beta-2(1-\omega^2)\beta. $$
Thus, it suffices to prove that the above expression is negative. It follows that $2(1-\omega^2) > 1$, and since $u>0$, we have $\tanh u<u$. Therefore, 
$$ \tanh \beta(\omega)< \beta(\omega) < 2(1-\omega^2) \beta(\omega).$$
Hence $\frac{dq}{d\omega}<0$. 
\end{proof}

\begin{lemma}\label{lem:q-prp-3}
    The function $q(\omega)$ satisfies $\frac{dq}{d\omega}<0$ at $\omega = \frac{1}{\sqrt{2}}$. 
\end{lemma}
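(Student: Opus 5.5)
The plan is to write $q$ near $\omega=\frac{1}{\sqrt2}$ as one smooth expression valid in all three regimes, so that its derivative at $\frac{1}{\sqrt2}$ follows from the chain rule and the $C^1$-regularity of $\Tmax$ in Proposition \ref{prp:f-max-t-reg}. The alternative, computing the one-sided limits of the logarithmic derivatives in Eqs. \eqref{eq:dq-dome} and \eqref{eq:dq-dome-1}, is avoided because both expressions degenerate as $\alpha\to 0$.

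\textbf{Unified formula.} Introduce the entire function
\begin{equation*}
\sigma(x,T)\coloneqq\sum_{k=0}^{\infty}\frac{(-x)^kT^{2k+1}}{(2k+1)!}.
\end{equation*}
It equals $\frac{\sin(T\sqrt{x})}{\sqrt{x}}$ for $x>0$, equals $T$ for $x=0$, and equals $\frac{\sinh(T\sqrt{-x})}{\sqrt{-x}}$ for $x<0$. Comparing with Eq. \eqref{eq:q-def}, all three cases collapse to
\begin{equation*}
q(\omega)=2\sqrt{1-\omega^2}\,\sigma\big(2\omega^2-1,\Tmax(\omega)\big),\qquad \omega\in(0,1).
\end{equation*}
At $\omega=\frac1{\sqrt2}$ this gives $2\cdot\frac{1}{\sqrt2}\Tmax=\sqrt2\,\Tmax$, which matches the middle case. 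Since $\sigma$ is real analytic and $\Tmax$ is $C^1$ on $(0,1)$ by Proposition \ref{prp:f-max-t-reg}, $q$ is $C^1$ at $\frac1{\sqrt2}$, and the chain rule applies there.

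\textbf{Derivative at $\frac{1}{\sqrt2}$.} Write $T\coloneqq\Tmax(\frac1{\sqrt2})>0$. At $x=0$ the partial derivatives are $\partial_x\sigma=-\frac{T^3}{6}$ and $\partial_T\sigma=1$. Also $\frac{d}{d\omega}(2\omega^2-1)=4\omega=2\sqrt2$, and $\frac{d}{d\omega}\sqrt{1-\omega^2}=-\frac{\omega}{\sqrt{1-\omega^2}}=-1$. For $\Tmax'(\frac1{\sqrt2})$ I take the common one-sided limit obtained in the proof of Proposition \ref{prp:f-max-t-reg}:
\begin{equation*}
\Tmax'\Big(\tfrac{1}{\sqrt2}\Big)=-\sqrt2\,T\,\frac{1+T^2/3}{1+T^2}.
\end{equation*}
Substituting these values gives
\begin{equation*}
q'\Big(\tfrac{1}{\sqrt2}\Big)=-2T+\sqrt2\Big(-\tfrac{2\sqrt2}{6}T^3+\Tmax'\big(\tfrac1{\sqrt2}\big)\Big)=-2T-\tfrac{2}{3}T^3-2T\,\frac{1+T^2/3}{1+T^2}.
\end{equation*}
Every term is negative because $T>0$, so $q'(\frac1{\sqrt2})<0$.

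\textbf{Main obstacle.} The delicate point is justifying that $q$ is differentiable at $\frac1{\sqrt2}$ and that the value of $\Tmax'$ there is the common one-sided limit. Both follow from the $C^1$ statement of Proposition \ref{prp:f-max-t-reg} together with the analyticity of $\sigma$ in $x$, which absorbs the change from trigonometric to hyperbolic functions across $\omega=\frac1{\sqrt2}$. I would also check the sign convention for $\partial_x\sigma$: the $k=1$ term of $\sigma$ is $-\frac{xT^3}{6}$.
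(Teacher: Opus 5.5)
Your proposal is correct and follows essentially the paper's route: both merge the trigonometric and hyperbolic formulas for $q$ into one expansion in $s=2\omega^2-1$ and differentiate at $s=0$, obtaining $q'(\tfrac{1}{\sqrt2})=-2T-\tfrac{2}{3}T^3+\sqrt2\,\Tmax'(\tfrac{1}{\sqrt2})$, and then use $\Tmax'(\tfrac{1}{\sqrt2})<0$ from Proposition \ref{prp:f-max-t-reg}. Your entire function $\sigma$ makes rigorous the paper's differentiation of its $O(s^2)$ remainder, and you substitute the explicit value of $\Tmax'(\tfrac{1}{\sqrt2})$ where the paper needs only its sign.
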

\begin{proof}
    Define $s\coloneqq2\omega^2-1$. Then $s=0$ correspons to $\omega = \frac{1}{\sqrt{2}}$. 
    \begin{equation*}
        \begin{split}
            q(\omega) & = 2\sqrt{1-\omega^2} \frac{\sin(\Tmax(\omega)\sqrt{s})}{\sqrt{s}},\qquad s>0,\\
            q(\omega) &= 2\sqrt{1-\omega^2} \frac{\sinh(\Tmax(\omega)\sqrt{-s})}{\sqrt{-s}},\qquad s<0.
        \end{split}
    \end{equation*}
    Both expressions have the same expansion:
    \begin{equation*}
        \begin{split}
            \frac{\sin(t\sqrt{s})}{\sqrt{s}} & = t-\frac{t^3}{6}s +O(s^2),\qquad s>0,\\
            \frac{\sinh(t\sqrt{-s})}{\sqrt{-s}} &=  t-\frac{t^3}{6}s +O(s^2) ,\qquad s<0.
        \end{split}
    \end{equation*}
    Thus, on both sides,
$$q(\omega) = 2\sqrt{1-\omega^2}
\left[ \Tmax(\omega)-\frac{\Tmax^3(\omega)}{6}(2\omega^2-1) +O((2\omega^2-1)^2)\right].
$$
Hence
$$ \frac{dq}{d\omega}\big|_{\omega=\frac{1}{\sqrt{2}}} = (-2\Tmax(\omega) + \sqrt{2}\frac{d}{d\omega}\Tmax -\frac{2}{3}\Tmax^3(\omega))|_{\omega=\frac{1}{\sqrt{2}}} $$
In the proof of Proposition \ref{prp:f-max-t-reg}, we show that $\frac{d}{d\omega}\Tmax<0$ at $\omega= \frac{1}{\sqrt{2}}$. Combining this result with the above equation gives the desired result.
\end{proof}

\begin{proof}[Proof of Proposition \ref{prp:bij-mas-sym}]
   Lemmas \ref{lem:q-prp-1}, \ref{lem:q-prp-2}, and \ref{lem:q-prp-3} show $\frac{dq}{d\omega}< 0$ for every $\omega\in(0,1)$.   It remains to verify the limiting behavior of \(q\) at the endpoints. Recall that the condition $\Tmax(\omega)\in (\frac{\pi}{2\omega},\frac{\pi}{\omega})$ implies that 
   \[
\lim_{\omega\to0^+}\Tmax(\omega)\sqrt{1-2\omega^2}=+\infty,
\]
then
   \[
\lim_{\omega\to 0^+}\frac{2\sqrt{1-\omega^2}}{\sqrt{1-2\omega^2}}\,
\sinh\!\big(\Tmax(\omega)\sqrt{1-2\omega^2}\big)= 2\lim_{\omega\to 0^+}
\sinh\!\big(\Tmax(\omega)\sqrt{1-2\omega^2}\big) =\infty.
\]
For $\omega\in(\frac{1}{\sqrt{2}},1)$, we have
$$ 0 \leq q(\omega) \leq \frac{2\sqrt{1-\omega^2}}{\sqrt{2\omega^2-1}}, \quad \text{and} \quad \lim_{\omega\to 1^-}\frac{2\sqrt{1-\omega^2}}{\sqrt{2\omega^2-1}}=0.$$
By the squeeze theorem, we obtain the desired result. 

Since \(\rho\) and \(q\) have the same monotonicity, we conclude that $\rho$ is strictly decreasing. Moreover, for \(\omega\in(0,1)\), the radial-angular argument implies
$$ \F\vcentcolon\MAS\setminus\{-\Id\} \to \Sym\setminus\{\Id\}$$
is a diffeomorphism, and the second limit above implies that $\F$ extends continuously to $g_\omega|_{\omega=1}$. Lemma \ref{lem:g-ome-sym} implies that if $S\in \F(\MAS)$ then $(\F|_{\MAS})^{-1}(S) = -S^{\frac{1}{2}}$. Since the positive-definite square-root map is smooth, the inverse of \(\F|_{\MAS}\) is smooth. Therefore, \(\F\) is a diffeomorphism.
\end{proof}

\subsubsection{Proof of Theorem \ref{thm:M-simp-conn}}
The following lemma is the polar decomposition of $\SLg$ adapted to our notation
\begin{lemma}
    The map 
$$ \Pm\vcentcolon\SLg\to\Sym \times \SO,\qquad \Pm(g) = (\F(g),(\F(g))^{-\frac{1}{2}}g),$$
is a diffeomorphism, with inverse
$$ \Pm^{-1}(S,Q) = S^{\frac{1}{2}}Q.$$
\end{lemma}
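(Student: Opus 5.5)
The statement to prove is that $\Pm$ is a diffeomorphism with inverse $(S,Q)\mapsto S^{\frac12}Q$. I will show that $\Pm$ is well defined and smooth, that the proposed inverse is smooth, and that the two maps compose to the identity in both orders.

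\emph{Well-definedness and smoothness of $\Pm$.} By Lemma \ref{lem:sym-ide}, $\F(g)=gg^\T$ lies in $\Sym$ and depends smoothly on $g$. On positive-definite symmetric matrices, the maps $S\mapsto S^{\frac12}$ and $S\mapsto S^{-\frac12}$ are smooth. One way to see this is that $S\mapsto S^2$ is a smooth bijection of the open cone of positive-definite symmetric matrices onto itself. Its differential $H\mapsto SH+HS$ is invertible, since a Sylvester equation with all eigenvalue sums positive has a unique solution, so the inverse function theorem applies. Restricting to $\det=1$ preserves everything, because $\det(S^{\frac12})=\det(S)^{\frac12}=1$. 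Hence $Q:=(gg^\T)^{-\frac12}g$ is smooth in $g$. It lies in $\SO$ because
$$QQ^\T=(gg^\T)^{-\frac12}gg^\T(gg^\T)^{-\frac12}=\Id, \qquad \det Q=\det(gg^\T)^{-\frac12}\det g=1.$$

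\emph{The inverse map.} The map $(S,Q)\mapsto S^{\frac12}Q$ is smooth and lands in $\SLg$. The composition in one order is
$$\Pm^{-1}(\Pm(g))=(gg^\T)^{\frac12}(gg^\T)^{-\frac12}g=g.$$
In the other order,
$$\F(S^{\frac12}Q)=S^{\frac12}QQ^\T S^{\frac12}=S,$$
and the second component is $S^{-\frac12}S^{\frac12}Q=Q$. So $\Pm\circ\Pm^{-1}=\mathrm{id}$ as well.

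The only nonroutine point is the smoothness of the square root. An alternative to the inverse function argument uses the explicit $2\times2$ formula
$$S^{\frac12}=\frac{S+\Id}{\sqrt{\Tr S+2}}, \qquad \det S=1,$$
which is clearly smooth on $\Sym$ since $\Tr S\ge 2$. To verify it, note that by Cayley--Hamilton $S^2=\Tr(S)\,S-\Id$. Then
$$(S+\Id)^2=S^2+2S+\Id=(\Tr S+2)\,S,$$
and $S+\Id$ is symmetric positive definite, so it divides by $\sqrt{\Tr S+2}$ to give the unique positive-definite square root. I would use this formula, since it makes the whole argument elementary and avoids any appeal to the inverse function theorem.
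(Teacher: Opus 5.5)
Your proof is correct and follows the same route as the paper. Both arguments check that $(gg^\T)^{-\frac12}g\in\SO$, verify the two compositions, and rely on smoothness of the positive-definite square root. The one addition is that you justify that smoothness, which the paper simply asserts; your Cayley--Hamilton formula $S^{\frac12}=(S+\Id)/\sqrt{\Tr S+2}$ for $\det S=1$ is a clean, fully elementary way to do it.
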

\begin{proof}
    By definition of $\F\vcentcolon\SLg \to \Sym$, the first component of $\Pm$ is well defined. Set $\F(g) = S$ and $Q\coloneqq S^{-\frac{1}{2}}g$. Then 
    $$QQ^\T  =  S^{-\frac{1}{2}}gg^\T S^{-\frac{1}{2}} = S^{-\frac{1}{2}}S S^{-\frac{1}{2}} = \Id.$$
    Moreover, $\det(Q)=\det(S^{-\frac{1}{2}})\det(g) = 1$. Thus, $Q\in \SO$ and $g=S^{\frac{1}{2}}Q$. 

    Conversely, let $S\in \Sym$ and $Q\in \SO$. Then
    $$ (S^{\frac{1}{2}}Q) (S^{\frac{1}{2}}Q)^\T = S^{\frac{1}{2}}QQ^\T S^{\frac{1}{2}} = S.$$
    Therefore $\Pm(S^{\frac{1}{2}}Q) = (S,Q)$ and hence $\Pm^{-1}(S,Q) =S^{\frac{1}{2}}Q. $
    Since the maps $S \mapsto S^{\frac{1}{2}}$ and $S \mapsto S^{-\frac{1}{2}}$ are smooth on the space of positive-definite symmetric matrices, both $\Pm$ and $\Pm^{-1}$ are smooth.  Hence, $\Pm$ is a diffeomorphism.
\end{proof}

\begin{proof}[Proof of Theorem \ref{thm:M-simp-conn}]
    By Proposition \ref{prp:bij-mas-sym} and Lemma \ref{lem:g-ome-sym}, we have
    $$ \Pm(\MAS) = \Sym \times \{-\Id\}.$$
    Therefore
    $$ \ND \coloneqq \SLg\setminus\MAS \simeq \Sym \times (\SO\setminus\{-\Id\}) \simeq \R^2\times\R \simeq \R^3.$$
    Hence, $\ND$ is open and simply connected. 
\end{proof}

\section*{Acknowledgments}
The authors thank the Department of Mathematics at the University of Michigan for providing a supportive environment for this research. This work originated as a project in the Geometry Lab at the University of Michigan. We would like to acknowledge Harish Jaisankar, Ziming Sun, and George Li, who participated in the initial stages of the project but were unable to continue their involvement for various reasons.

The authors are also grateful to Richard Montgomery and Andrei Ardentov for their valuable email discussions and insights, regardless of the research's eventual outcome.
The second author wishes to express his gratitude to Velimir Jurdjevic, Andrei Agrachev, Yuri Sachkov, and Andrei Ardentov, whose papers and books provided an invaluable introduction to the theory of optimal synthesis.

\bibliographystyle{plain}
\bibliography{bibli.bib}

\end{document}